\address{Department of Mathematics, Tokyo Institute of Technology, 2-12-1 Ookayama, Meguro-ku, Tokyo, 152-8551, Japan}
\email{isoshima.t.aa@m.titech.ac.jp}
\theoremstyle{plain}
\newtheorem{thm}{Theorem}[section]
\newtheorem{lem}[thm]{Lemma}
\newtheorem*{thm*}{Theorem}
\newtheorem*{cor*}{Corollary}
\theoremstyle{definition}
\newtheorem{dfn}[thm]{Definition}
\newtheorem{rem}[thm]{Remark}
\newtheorem{que}[thm]{Question}
\newtheorem*{que*}{Question}
\newtheorem*{con*}{Conjecture}
\begin{document}

\title{Infinitely many standard trisection diagrams for Gluck twisting}
\author{Tsukasa Isoshima}
\date{}

\begin{abstract}
Gay and Meier asked if a trisection diagram for the Gluck twist on a spun or twist-spun 2-knot in $S^4$ obtained by a certain method is standard. In this paper, we show that the trisection diagram for the Gluck twist on the spun $(p+1,p)$-torus knot is standard, where $p$ is any integer greater than or equal to 2.
%In \cite{MR4354420}, Gay and Meier asked if a trisection diagram for the Gluck twist on a spun or twist-spun 2-knot in $S^4$ obtained by a certain method is standard. This trisection diagram was regarded as a counterexample of so called 4-dimensional Waldhausen conjecture. However, in \cite{isoshima2023trisections}, the author and Ogawa showed that the trisection diagram for the Gluck twist on the spun trefoil, the simplest case in the question, is standard. In this paper, we show that the trisection diagram for the Gluck twist on the spun $(p+1,-p)$-torus knot is standard, where $p$ is any integer greater than or equal to 2. %We conjecture that the trisection diagram for the Gluck twist on any spun 2-knot is standard due to this theorem if we choose the simplest doubly pointed Heegaard diagram in the construstion of a doubly pointed trisection diagram of a spun 2-knot.
\end{abstract}

\maketitle

\section{Introduction}\label{sec:intro}
A trisection, which was introduced by Gay and Kirby \cite{MR3590351}, is roughly a decomposition of a closed oriented smooth 4-manifold into three 4-dimensional 1-handlebodies, namely, the union of a 0-handle and 1-handles. A trisection is a 4-dimensional analogue of a Heegaard splitting which is well known as a decomposition of a 3-manifold into two 3-dimensional 1-handlebodies. As well as the decomposition of a Heegaard splitting is described by a Heegaard diagram, the decomposition of a trisection is described by a trisection diagram. It is a 4-tuple which consists of a closed oriented surface appears as the triple intersection of the decomposition and three kinds of simple closed curves $\alpha$, $\beta$ and $\gamma$ in the surface. A trisection diagram actually can be defined regardless of a trisection, but there is a one to one correspondence between trisections and trisection diagrams under an appropriate equivalence relation. A trisection diagram recovers a handle decomposition of the 4-manifold by producting a 2-dimensional disk to the surface and attaching 2-handles along the $\alpha$, $\beta$ and $\gamma$ curves with the surface framings.

It is known as a corollary from the existence and uniqueness of trisections up to stabilization that two closed 4-manifolds are diffeomorphic if and only if two corresponding trisection diagrams are related by surface diffeomorphisms, handle slides among the same family curves and stabilizations. In the three operations, stabilizations are subtle ones since they may be not necessary in the corollary. For example, 4-manifolds obtained by spinning Seifert fibered spaces admit infinitely many trisection diagrams that they have the same genus but stabilizations are really necessary to deform one to another \cite{MR4308281}. On the other hand, it is easy to check that the two trisection diagrams of $\mathbb{C}P^2 \# \overline{\mathbb{C}P^2}$ and $S^2 \tilde{\times} S^2$, the non-trivial sphere bundle over the sphere, depicted in \cite{MR3590351} are related without stabilizations. With respect to $S^4$, it is conjectured, called the 4-dimensional Waldhausen conjecture, that each trisection of $S^4$ is isotopic to the genus 0 trisection or its stabilization \cite{MR3544545}. This conjectures in terms of trisection diagrams (diffeomorphic level) that each trisection diagram $\mathcal{D}$ of $S^4$ is related to the stabilization of the genus 0 trisection diagram (whose type is the same as the type of $\mathcal{D}$) without stabilizations. Actually, there are some supporting evidences of the conjecture (for example, \cite{https://doi.org/10.48550/arxiv.2205.04817, MR3544545}). However, this conjecture is thought negatively at present by \cite{MR3544545}, and there are some potential counterexamples. In this paper, as one of the potential counterexamples, we will deal with trisections of Gluck twisted 4-manifolds which are diffeomorphic to $S^4$.

The Gluck twist is a cutting and pasting operation on a 2-knot. Let $X$ be a closed 4-manifold and $K$ be a 2-knot in $X$ with normal Euler number 0. Then, the Gluck twisted 4-manifold $\Sigma_K(X) = X-int(\nu(K)) \cup_{\tau} S^2 \times D^2$, where $\tau$ is the non-trivial self-diffeomorphism of $S^2 \times S^1$, namely $\tau(z,e^{i\theta})=(ze^{i\theta}, e^{i\theta})$, and $\nu(K)$ is a tubular neighborhood of $K$ in $X$. If $X=S^4$, the Gluck twisted 4-manifold on any 2-knot is a homotopy 4-sphere, and hence homeomorphic to $S^4$ by Freedman's theorem. Moreover, it is well-known that the 4-manifold is diffeomorphic to $S^4$ if $K$ is a spun 2-knot or twist-spun 2-knot. Thus, a corresponding trisection diagram and the stabilization of the genus 0 trisection diagram are related by the 3-operations including stabilizations. However, it is not known that whether stabilizations are truly necessary for its deformation or not. Gay and Meier submitted a question about it as follows: Note we say that a trisection diagram of a 4-manifold which is diffeomorphic to $S^4$ is \textit{standard} if it is the stabilization of the genus 0 trisection diagram after several surface diffeomorphisms and handle slides.

\begin{que}[\cite{MR4354420}]\label{que:GM}
Let $\mathcal{K}$ be the spin or the twist-spin of a non-trivial knot $K$ in $S^3$. Is a trisection diagram of $\Sigma_{\mathcal{K}}({S^4})$ constructed from a doubly-pointed Heegaard diagram of $K$ by combining some methods \textbf{ever} standard?
\end{que} 

The trisection diagram in the question might be considered non-standard. If it is true, the trisection diagram is a counterexample of the conjecture mentioned above. However, as one of the main theorems in \cite{isoshima2023trisections}, the author and Ogawa showed that if we choose the trefoil as the 1-knot $K$ and Figure \ref{fig:(p+1,-p)_dpHd} except $\delta_1$ and $\delta_2$ as its doubly-pointed Heegaard diagram, then the trisection diagram is standard. In this paper, we show the following main theorem including this theorem.

\begin{thm*}[Theorem \ref{thm:main}]
The trisection diagram $\mathcal{D}_p$ ($p \ge 2$) is standard.
\end{thm*}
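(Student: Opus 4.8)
The plan is to read off the trisection diagram $\mathcal{D}_p=(\Sigma_g;\alpha,\beta,\gamma)$ explicitly from the doubly-pointed Heegaard diagram of the torus knot $T(p+1,p)$ in Figure~\ref{fig:(p+1,-p)_dpHd} via the construction of \cite{MR4354420}, and then to produce a concrete finite sequence of surface diffeomorphisms and handle slides, using no stabilizations, that transforms $\mathcal{D}_p$ into the stabilization of the genus-$0$ trisection diagram of $S^4$ of the matching type.

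Since in a trisection diagram of a homotopy $4$-sphere each of the pairs $(\Sigma_g,\alpha,\beta)$, $(\Sigma_g,\beta,\gamma)$, $(\Sigma_g,\gamma,\alpha)$ is a Heegaard diagram of $\#^{k}(S^1\times S^2)$ for some $k\ge0$, the first step is to normalize the $\alpha$- and $\beta$-curves: by handle slides among the $\alpha_i$, handle slides among the $\beta_i$, and a diffeomorphism of $\Sigma_g$, bring $(\Sigma_g,\alpha,\beta)$ to the standard form in which $\alpha_i=\beta_i$ on the $S^1\times S^2$-handles and $\alpha_i\cdot\beta_i=1$ on the remaining stabilizing handles, keeping track of the images of the $\gamma_i$. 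The second step is to simplify $\gamma$ by handle slides among the $\gamma_i$, supplemented where needed by handle slides among the $\alpha_i$ performed simultaneously among the $\beta_i$ so as not to disturb the normalized $\alpha\beta$-part. Here the arithmetic of the torus knot enters: writing $(p+1)/p=1+1/p$, the $\gamma_i$ wind $p$ times through a fixed subsurface of $\Sigma_g$, and I expect to remove one winding at a time at the cost of only a bounded number of handle slides, independently of $p$. Iterating this removes all $p$ windings and splits off, as connected summands, standard genus-$1$ trisection diagrams of $S^4$ (each given by three curves pairwise meeting once); equivalently, one may organize the argument as an induction on $p$ whose base case $p=2$, the trefoil $T(3,2)$, is the theorem of the author and Ogawa in \cite{isoshima2023trisections}. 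In either formulation the outcome is a diagram related to $\mathcal{D}_p$ by diffeomorphisms and handle slides that is visibly a connected sum of standard genus-$1$ $S^4$-pieces, i.e.\ a stabilization of the genus-$0$ diagram, so $\mathcal{D}_p$ is standard.

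The main obstacle will be making the winding-removal uniform in $p$: one must pin down, inside $\mathcal{D}_p$, an explicit region that after a controlled handle-slide sequence is honestly $\mathcal{D}_{p-1}$ with standard pieces attached — not merely a diagram resembling it — and check that each $\gamma$-slide leaves the $\alpha\beta$-normal form intact. A second, purely finite but delicate point is the orientation and framing bookkeeping needed to certify that the split-off triples are the genuine standard genus-$1$ $S^4$-pieces, so that no $\mathbb{C}P^2$- or $\overline{\mathbb{C}P^2}$-summand is hidden in the framings; this must be carried out explicitly in the diagrams. Once these are settled, chaining the reduction from $p$ down to $2$ and invoking \cite{isoshima2023trisections} completes the proof.
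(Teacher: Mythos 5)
Your high-level plan (normalize two of the three curve families, then peel off the torus-knot winding one step at a time, inducting on $p$) is a reasonable opening move, but as written it leaves exactly the hard part unresolved, and the paper actually takes a different route than the one you sketch. Two concrete gaps.

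First, you propose to show that one "winding removal" turns $\mathcal{D}_p$ into $\mathcal{D}_{p-1}$ with a standard genus-$1$ summand split off, and to anchor the induction at the trefoil case $p=2$ from \cite{isoshima2023trisections}. The paper's proof does not do this, and I do not see how to make it work as stated: there is no step where $\mathcal{D}_p$ is exhibited as a connected sum of $\mathcal{D}_{p-1}$ and a standard piece. Instead the paper first performs a $p$-dependent family of Dehn twists and handle slides (the operations $\ast$ and $\ast\ast$, together with Lemma~\ref{lem:beta_d}) to bring every $\mathcal{D}_p$ into a position where \emph{one} destabilization (Naylor's method) applies, dropping to a fixed genus-$3$ surface. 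After further cleanup, the $\alpha$- and $\beta$-families are $p$-independent and all remaining $p$-dependence is concentrated in a single mapping class acting on $\gamma$: $\gamma = t_{f_{p-2}(m)}(\gamma_1,\gamma_2,\gamma_3)$ with $f_{p-2}=(t_{\delta_1^\beta}^{-1}t_{\delta_1^\gamma})^{p-2}$. You never get a smaller $\mathcal{D}_{p-1}$; you get a uniform diagram with a twist parameter.

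Second, and more importantly, you flag "making the winding-removal uniform in $p$" as the main obstacle and then leave it open. That obstacle is precisely what the paper's key technical device, the seesaw lemma (Lemma~\ref{lem:seesaw}), is built to overcome. It shows
$(\alpha,\beta,t_{f_{p-2}(m)}(\gamma))=(\alpha,t_{m'}(\beta),t_{f_{p-3}(m)}(\gamma))$,
i.e.\ one unit of the twist on $\gamma$ can be transferred onto $\beta$ as a twist about a fixed curve $m'$, at the cost of only handle slides of $f_i(m)$ over $\alpha_2,\beta_2,\gamma_2$. Iterating this $p-2$ times yields $(\alpha,t_{m'}^{p-2}(\beta),t_m(\gamma))$, after which a single global diffeomorphism $t_{m'}^{-(p-2)}t_m^{-1}$ lands on the stabilization of the genus-$0$ diagram. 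Without something playing the role of this lemma, your "bounded number of handle slides per winding" is an unverified hope, and your framing/orientation bookkeeping worry compounds it. So the proposal is not a proof: it identifies the difficulty but supplies neither the destabilization step nor the seesaw mechanism that the argument actually hinges on.
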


%\begin{thm}
%If we choose $K$ as the $(p+1,-p)$-torus knot, where $p$ is any integer greater than or equal to 2, and its doubly-pointed Heegaard diagram as the one in Lemma \ref{lem:(p+1,-p)_dpHd}, then the trisection diagram in Question \ref{que:GM} is standard.
%\end{thm}

The trisection diagram $\mathcal{D}_p$ is the one depicted in Figure \ref{fig:D_p}. This is the trisection diagram in Question \ref{que:GM} when we choose $(p+1,p)$-torus knot and Figure \ref{fig:(p+1,-p)_dpHd} as $K$ and its doubly-pointed Heegaard diagram, respectively. In the proof of the theorem, we perform so many Dehn twists and handle slides. In the final step of the proof, we use an inductive argument for trisection diagrams, which we call the seesaw lemma (Lemma \ref{lem:seesaw}).

\section*{Organization}
We briefly review preliminaries, specifically (relative) trisections of 4-manifolds (with boundary) and doubly pointed Heegaard/trisection diagrams in Section \ref{sec:preliminaries}. In Section \ref{sec:Gluck}, we construct the trisection diagram in the question for the spun $(p+1,p)$-torus knot explicitly by the methods mentioned in the question. Then we proof the main theorem in Section \ref{sec:main}. We have further remarks in Section \ref{sec:fr}.

\section*{Acknowledgement}
The author would like to express sincere gratitude to his supervisor, Hisaaki Endo, for his helpful comments. The author is partially supported by Grant-in-Aid for JSPS Research Fellow from JSPS KAKENHI Grant Number JP23KJ0888.

\section{Preliminaries}\label{sec:preliminaries}
%20230625：現時点で３章以降で使ってるのは，dpHd, artd, trisec diag, Dehn twist, handle slideくらいだな．だから，予備知識もそれくらいで良いかも．最悪，一つ前の論文を見てね，で済む話ではあるし．前回の論文で書いてないisopotyとかdiffeoとかの話をここに入れるのはありだと思う．
In this section, we review notions used in Section \ref{sec:Gluck} and \ref{sec:main}. In this paper, unless otherwise stated, any 4-manifold is compact, connected, oriented and any surface-knot is a connected, closed surface smoothly embedded in a 4-manifold. A more detailed description of the notions appeared in this section is given in Section 2 in \cite{isoshima2023trisections}.

\subsection{Trisections and relative trisections}
In this subsection, we recall (relative) trisections of 4-manifolds (with boundary).

\begin{dfn}
Let $X$ be a closed 4-manifold. A $(g;k_1,k_2,k_3)$-\textit{trisection} of $X$ is a 3-tuple $(X_1,X_2,X_3)$ satisfying the following conditions:
\begin{itemize}
\item $X=X_1 \cup X_2 \cup X_3$,
\item For each $i=1,2,3$, $X_i \cong \natural_{k_i} S^1 \times D^3$,
\item For each $i=1,2,3$, $X_i \cap X_j \cong \natural_{g} S^1 \times D^2$,
\item $X_1 \cap X_2 \cap X_3 \cong \#_{g} S^1 \times S^1 = \Sigma_g$.
\end{itemize}
\end{dfn}

Let $H_{\alpha} = X_3 \cap X_1$, $H_{\beta} = X_1 \cap X_2$ and $H_{\gamma} = X_2 \cap X_3$. The union $H_{\alpha} \cup H_{\beta} \cup H_{\gamma}$ is called the \textit{spine} of the trisection. A trisection is uniquely determined by its spine. Note that if $k_1=k_2=k_3$, the trisection is said to be \textit{balanced}. In Section \ref{sec:intro}, the 4-tuple of non-negative integers $(g;k_1,k_2,k_3)$ is called the \textit{type} of the trisection. 

The decomposition of a trisection is described by a trisection diagram.

\begin{dfn}
A 4-tuple $(\Sigma_g,\alpha,\beta,\gamma)$ is called a $(g;k_1,k_2,k_3)$-\textit{trisection diagram} if the following holds:
\begin{itemize}
\item $(\Sigma_g,\alpha,\beta)$ is a Heegaard diagram of $\#_{k_1}S^1 \times S^2$,
\item $(\Sigma_g,\beta,\gamma)$ is a Heegaard diagram of $\#_{k_2}S^1 \times S^2$,
\item $(\Sigma_g,\gamma,\alpha)$ is a Heegaard diagram of $\#_{k_3}S^1 \times S^2$.
\end{itemize}
\end{dfn}

\begin{dfn}
A \textit{stabilization} of a trisection diagram is the connected-sum of the trisection diagram and the genus 1 trisection diagram of $S^4$ depicted in Figure \ref{fig:stabilizationfordiagram}, or the trisection diagram itself obtained by the connected-sum. The reverse operation is called a \textit{destabilization}.
\end{dfn}

\begin{figure}[h]
\begin{center}
\includegraphics[width=8cm, height=3cm, keepaspectratio, scale=1]{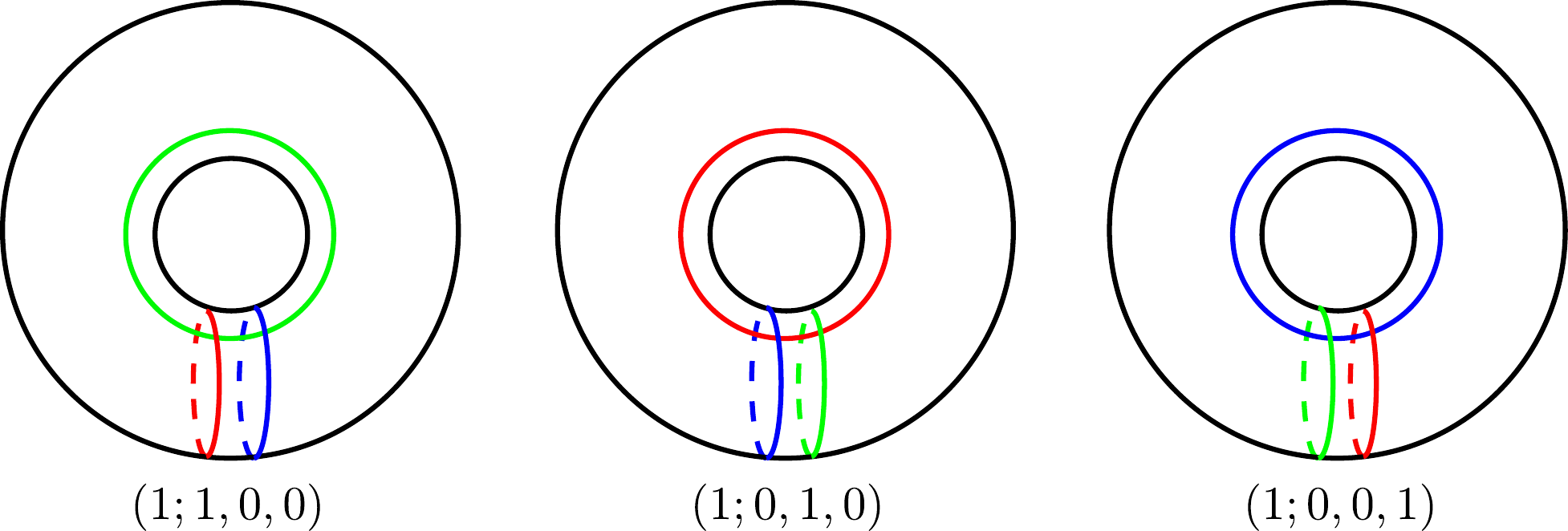}
\end{center}
\setlength{\captionmargin}{50pt}
\caption{The genus 1 trisection diagrams of $S^4$.}
\label{fig:stabilizationfordiagram}
\end{figure}

Note that this (de)stabilization is the unbalanced one.

Any 4-manifold admits a trisection, and any two trisections of the same 4-manifold is isotopic after stabilizing them balancedly some times \cite{MR3590351}. As a corollary, it is known that any two closed 4-manifolds are diffeomorphic if and only if corresponding two trisection diagrams are related by surface diffeomorphisms, handle slides among the same family curves and balanced stabilizations. 

Let $X$ and $Y$ be diffeomorphic closed 4-manifolds. Two trisections $(X_1, X_2, X_3)$ of $X$ and $(Y_1,Y_2,Y_3)$ of $Y$ are \textit{diffeomorphic} if there exists a diffeomorphism $h \colon X \to Y$ such that $h(X_i)=Y_i$ for each $i=1,2,3$. Two trisections $(X_1, X_2, X_3)$ and $(Y_1,Y_2,Y_3)$ of a 4-manifold $Z$ are \textit{isotopic} if there exists an isotopy $h_t \colon Z \to Z$ such that $h_0=id_{Z}$ and $h_1(X_i)=Y_i$ for each $i=1,2,3$. Two trisections are diffeomorphic if and only if corresponding two trisection diagrams are related without stabilizations. It is conjectured that any trisection of $S^4$ is isotopic to the genus 0 trisection or its stabilization \cite{MR3544545}. A counterexample of this conjecture may be given via trisection diagrams of a 4-manifold diffeomorphic to $S^4$ since any two isotopic trisections are diffeomorphic.

Trisections of 4-manifolds with boundary are called \textit{relative trisections} \cite{castro2016relative}. As well as the closed case, \textit{relative trisection diagrams} can be defined for relative trisections \cite{MR3770114}. \textit{Arced relative trisection diagrams}, which are needed when we consider gluing two relative trisection diagrams, are defined for relative trisection diagrams. An algorithm for drawing the arcs is developed (Theorem 5 in \cite{MR3770114}).

\subsection{Doubly pointed Heegaard/trisection diagrams}
A \textit{b-bridge decomposition} is a decomposition of a classical knot in a 3-manifold using its Heegaard splitting such that the knot intersects each handlebody in $b$ trivial arcs. A knot admits a 1-bridge decomposition can be described by a \textit{doubly pointed Heegaard diagram} which is a Heegaard diagram of the 3-manifold with two base points. The union of two arcs connecting the two base points in each handlebody of the Heegaard splitting describes the knot, and a knot can be determined uniquely by the two base points. For more details, see \cite{MR3143587} for example.

As well as bridge decompositions of classical knots in 3-manifolds, \textit{bridge trisections} of surface knots in 4-manifolds are introduced in \cite{MR3683111, MR3871791}. A bridge trisected surface-knot in any 4-manifold can be described by its \textit{shadow diagram} that is a pair of a trisection diagram of the 4-manifold and arcs describing the surface-knot. In particular, a shadow diagram of a 2-knot which is 1-bridge position is called a \textit{doubly pointed trisection diagram}. This is a pair of a trisection diagram and two points describing the 2-knot, often denoted $(\Sigma, \alpha, \beta, \gamma, z, w)$. Note it is known that each 2-knot can be put in 1-bridge position by meridional stabilizations \cite{MR3683111, MR3871791}. See \cite{MR3683111, MR3871791} for more details.

\section{Constructing trisection diagrams for Gluck twisting}\label{sec:Gluck}
In this section, we construct the trisection diagram (Lemma \ref{lem:main}) in Question \ref{que:GM} for the spun $(p+1,p)$-torus knot $S(t(p+1,p))$. Since the way of the construction is the same as it in Section 3 in \cite{isoshima2023trisections}, we omit some explanations that should be done.

\begin{lem}\label{lem:(p+1,-p)_dpHd}
Let $(\Sigma, \alpha, \beta, z, w)$ be a doubly-pointed Heegaard diagram of $(S^3, t(3,2))$ depicted in Figure \ref{fig:(p+1,-p)_dpHd}. Then, for $p \ge 2$, $(\Sigma, \alpha, t_{\delta_1}^{p-2}t_{\delta_2}^{-(p-2)}(\beta), z, w)$ is a doubly-pointed Heegaard diagram of $(S^3, t(p+1,p))$, where $\delta_1$ and $\delta_2$ are the curves depicted in Figure \ref{fig:(p+1,-p)_dpHd}.
\end{lem}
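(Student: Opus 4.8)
The plan is to check separately the two pieces of data that a doubly-pointed Heegaard diagram records — the ambient $3$-manifold and the knot. Write $\phi_p = t_{\delta_1}^{p-2}t_{\delta_2}^{-(p-2)}$ for the mapping class applied to $\beta$, and let $K_p$ denote the knot determined by $(\Sigma,\alpha,\phi_p(\beta),z,w)$; since $\phi_2=\mathrm{id}$, the hypothesis says $K_2=t(3,2)$, and the goal is $K_p=t(p+1,p)$ for all $p\ge 2$.

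The ambient manifold is the easy half. One reads off from Figure~\ref{fig:(p+1,-p)_dpHd} that $\delta_1$ and $\delta_2$ are disjoint from every $\alpha$ curve; hence (taking the twisting annuli to miss the $\alpha$ curves and the basepoints) $t_{\delta_1}$ and $t_{\delta_2}$ fix the $\alpha$ curves, so $\phi_p(\alpha)=\alpha$. Applying the surface diffeomorphism $\phi_p$ to the genus-$g$ Heegaard diagram $(\Sigma,\alpha,\beta)$ of $S^3$ produces $(\Sigma,\phi_p(\alpha),\phi_p(\beta))=(\Sigma,\alpha,\phi_p(\beta))$, which is therefore again a Heegaard diagram of $S^3$; and since $z,w$ do not lie on $\delta_1\cup\delta_2$ they remain in the complement of $\alpha\cup\phi_p(\beta)$. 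So $(\Sigma,\alpha,\phi_p(\beta),z,w)$ is a genuine doubly-pointed Heegaard diagram of $S^3$, and the content of the lemma is the identification of $K_p$.

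For $K_p$ I would induct on $p$, the base case being the hypothesis. It then suffices to compare the diagrams for $p$ and $p+1$, which differ by one extra $t_{\delta_1}$ and one extra $t_{\delta_2}^{-1}$ applied to $\beta$ (i.e.\ $\phi_{p+1}=t_{\delta_1}\phi_p t_{\delta_2}^{-1}$), and to show that this passage carries $t(p+1,p)$ to $t(p+2,p+1)$. The knot $K_p$ is the union of the fixed $z$-to-$w$ arc in the $\alpha$-handlebody with the $z$-to-$w$ arc in the $\phi_p(\beta)$-handlebody, and it is precisely the fact that $z,w$ stay put while $\beta$ moves that makes the knot type change. The cleanest way to see the step is to isotope the configuration carrying $\alpha$, $\beta$, $\delta_1$, $\delta_2$, $z$, $w$ into a standard local model — essentially the once-punctured genus-one picture in which torus knots appear as $(1,1)$-diagrams — and to check there that $t_{\delta_1}t_{\delta_2}^{-1}$ changes the one relevant slope parameter by one (equivalently, inserts a single full twist into the appropriate strands of a band presentation), taking $t(p+1,p)$ to $t(p+2,p+1)$. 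This is the same computation that is carried out for $p=2$ in Section~3 of \cite{isoshima2023trisections}, now run with a twist box of weight $p-2$; alternatively one can bypass the induction and do the diagram chase directly on Figure~\ref{fig:(p+1,-p)_dpHd} with a box labeled $p-2$ in place of the repeated twists.

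The main obstacle is exactly this last diagram manipulation: carrying the $z$-to-$w$ $\beta$-arc faithfully through the $2(p-2)$ Dehn twists and certifying that the resulting knot is $t(p+1,p)$ and not some other torus knot (or a connected sum). The delicate bookkeeping is the position of $z$ and $w$ relative to the twisted curve, and fixing the sign and direction of the induced slope change; once those are pinned down the argument is formal.
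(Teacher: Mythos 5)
Your proposal is correct and matches the paper's proof in substance: verify a single-twist step (that $t_{\delta_1}t_{\delta_2}^{-1}$ raises the torus-knot parameters by one, sending the $t(3,2)$-diagram to the $t(4,3)$-diagram), then induct, deferring the underlying diagram verification to the standard genus-one doubly-pointed picture of torus knots — which is exactly the content of the paper's one-line proof and its citation of \cite{MR3143587}. You simply add explicit scaffolding (separating the ambient-$S^3$ check from the knot identification, and noting $\phi_{p+1}=t_{\delta_1}\phi_p t_{\delta_2}^{-1}$, which, because $\delta_1$ and $\delta_2$ are disjoint so the twists commute, yields $\phi_{p+1}(\beta)=t_{\delta_1}t_{\delta_2}^{-1}\phi_p(\beta)$).
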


\begin{figure}[h]
\begin{center}
\includegraphics[width=8cm, height=3cm, keepaspectratio, scale=1]{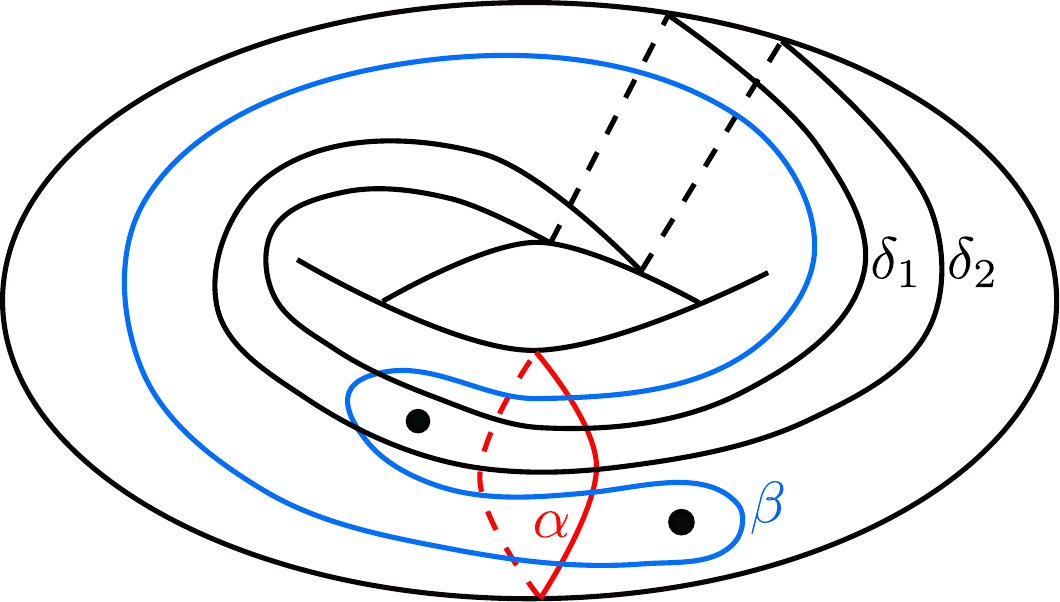}
\end{center}
\setlength{\captionmargin}{50pt}
\caption{A doubly-pointed Heegaard diagram of $(S^3, t(3,2))$ and the curves $\delta_1$ and $\delta_2$.}
\label{fig:(p+1,-p)_dpHd}
\end{figure}

\begin{proof}
It is easily seen that $(\Sigma, \alpha, t_{\delta_1}t_{\delta_2}^{-1}(\beta), z, w)$ is a doubly-pointed Heegaard diagram of $(S^3, t(4,3))$, and hence we inductively see that $(\Sigma, \alpha, t_{\delta_1}^{p-2}t_{\delta_2}^{-(p-2)}(\beta), z, w)$ is a doubly-pointed Heegaard diagram of $(S^3, t(p+1,p))$. See also \cite{MR3143587}.
\end{proof}

We have the following arced relative trisection diagram of $S^4-S(t(p+1,p))$ from this doubly-pointed Heegaard diagram by Meier's spinning method, removing the base points of the doubly-pointed trisection diagram and an algorithm for drawing arcs of the relative trisection diagram. See Section 3 in \cite{isoshima2023trisections} for details.

\begin{lem}\label{lem:(p+1,-p)_artd}
Let $(\Sigma, \alpha, \beta, \gamma, a, b, c)$ be an arced relative trisection diagram of $S^4-S(t(3,2))$ depicted in Figure \ref{fig:S(p+1,-p)_artd}. Then, for $p \ge 2$, $(\Sigma, \alpha', \beta', \gamma', a', b', c')$ is an arced relative trisection diagram of $S^4-S(t(p+1,p))$, where 
\begin{itemize}
\item $\alpha' = (t_{\delta_1^{\beta}}^{p-2}t_{\delta_2^{\beta}}^{-(p-2)}(\alpha_1), \alpha_2, \alpha_3)$,
\item $\beta' = (t_{\delta_1^{\beta}}^{p-2}t_{\delta_2^{\beta}}^{-(p-2)}(\beta_1), \beta_2, \beta_3)$,
\item $\gamma' = (t_{\delta_1^{\gamma}}^{p-2}t_{\delta_2^{\gamma}}^{-(p-2)}(\gamma_1), \gamma_2, \gamma_3)$,
\item $a' = t_{\delta_1^{\beta}}^{p-2}t_{\delta_2^{\beta}}^{-(p-2)}(a)$,
\item $b' = t_{\delta_1^{\beta}}^{p-2}t_{\delta_2^{\beta}}^{-(p-2)}(b)$,
\item $c' = t_{\delta_1^{\gamma}}^{p-2}t_{\delta_2^{\gamma}}^{-(p-2)}(c)$,
\end{itemize}
and the curves $\delta_1^{\beta}$, $\delta_2^{\beta}$, $\delta_1^{\gamma}$ and $\delta_2^{\gamma}$ are them depicted in Figure \ref{fig:S(p+1,-p)_artd}.
\end{lem}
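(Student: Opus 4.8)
\textbf{Proof proposal for Lemma \ref{lem:(p+1,-p)_artd}.}

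The plan is to mirror the proof of Lemma \ref{lem:(p+1,-p)_dpHd} one dimension up, leveraging the naturality of Meier's spinning construction and the arc-drawing algorithm under the operation of applying a surface diffeomorphism to a doubly-pointed Heegaard diagram. Concretely, I would first recall that the arced relative trisection diagram $(\Sigma,\alpha,\beta,\gamma,a,b,c)$ of $S^4-S(t(3,2))$ in Figure \ref{fig:S(p+1,-p)_artd} is produced from the doubly-pointed Heegaard diagram of Figure \ref{fig:(p+1,-p)_dpHd} by the explicit sequence of steps described in Section 3 of \cite{isoshima2023trisections}: Meier's spinning method turns the doubly-pointed Heegaard diagram into a doubly-pointed trisection diagram of $S(t(3,2))$ in $S^4$, deleting the two base points and taking the complement produces a relative trisection diagram of $S^4-S(t(3,2))$, and the algorithm of Theorem 5 in \cite{MR3770114} adorns it with the arcs $a,b,c$. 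The key observation is that every one of these steps is natural with respect to the ambient surface: each $\beta$-type (resp. $\gamma$-type) curve of the output is obtained from a curve of the doubly-pointed Heegaard diagram by a \emph{fixed} geometric recipe that does not see the particular isotopy class, so conjugating the input by a mapping class supported away from the relevant base points conjugates the output in lockstep.

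Second, I would track precisely which curve in the surface $\Sigma$ of Figure \ref{fig:S(p+1,-p)_artd} plays the role of $\delta_1$ (resp. $\delta_2$) from Figure \ref{fig:(p+1,-p)_dpHd} after spinning: under Meier's method the original Heegaard surface embeds into the trisection surface, and the $\beta$-copy of $\delta_i$ lands as the curve I call $\delta_i^{\beta}$, while the $\gamma$-copy — coming from the third piece of the spun trisection — lands as $\delta_i^{\gamma}$; these are exactly the curves drawn in Figure \ref{fig:S(p+1,-p)_artd}. Because $\delta_i$ in Figure \ref{fig:(p+1,-p)_dpHd} is disjoint from the base points $z,w$, the supports of $t_{\delta_i^{\beta}}$ and $t_{\delta_i^{\gamma}}$ are disjoint from the deleted meridional discs, so these Dehn twists descend to the relative trisection diagram of the complement. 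Applying $t_{\delta_1}^{p-2}t_{\delta_2}^{-(p-2)}$ to $\beta$ in the doubly-pointed Heegaard diagram — which by Lemma \ref{lem:(p+1,-p)_dpHd} changes $t(3,2)$ to $t(p+1,p)$ — therefore feeds through the construction to apply $t_{\delta_1^{\beta}}^{p-2}t_{\delta_2^{\beta}}^{-(p-2)}$ to the $\beta$-curves and arc $a$, $b$ that originate from the first (first–second intersection) handlebody, and $t_{\delta_1^{\gamma}}^{p-2}t_{\delta_2^{\gamma}}^{-(p-2)}$ to the $\gamma$-curves and arc $c$ that originate from the spun copy; the $\alpha$-curves likewise inherit the $\beta$-side twists on their first component since they share that handlebody. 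This yields exactly the tuple $(\Sigma,\alpha',\beta',\gamma',a',b',c')$ in the statement, and the same inductive step used in Lemma \ref{lem:(p+1,-p)_dpHd} confirms the underlying surface-knot is $S(t(p+1,p))$, so by construction the result is an arced relative trisection diagram of its complement.

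The main obstacle I expect is bookkeeping rather than conceptual: one must verify that the index $i=1$ components (the curves $\alpha_1,\beta_1,\gamma_1$ and arcs $a,b,c$) are precisely the ones affected, that the twists $t_{\delta_i^{\beta}}$ and $t_{\delta_i^{\gamma}}$ do not interact with the other components $\alpha_2,\alpha_3$, etc., and that the arc-drawing algorithm's output genuinely commutes with the diffeomorphism — i.e., that drawing the arcs for the twisted diagram gives the twisted arcs. This is where a careful picture-level check against Figure \ref{fig:S(p+1,-p)_artd} is needed, confirming that $\delta_1^{\beta},\delta_2^{\beta}$ miss $\alpha_2,\alpha_3,\beta_2,\beta_3,\gamma$ and the arcs $c$, and symmetrically for the $\gamma$-superscripted curves; once that is established, the lemma follows by invoking the naturality of each construction step and the $p=3$ base case already drawn.
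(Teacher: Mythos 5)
The paper presents this lemma without a formal proof, relying on the construction (Meier's spinning method, deletion of base points, and the arc-drawing algorithm of \cite{MR3770114}) described in Section 3 of \cite{isoshima2023trisections}; your proposal spells out precisely this naturality argument, tracking the Dehn twists of Lemma \ref{lem:(p+1,-p)_dpHd} through each step of the construction. This matches the intended justification, and the points you flag for verification — that the twist supports miss the base points and the unaffected curves, and that the arc-drawing algorithm commutes with the surface diffeomorphism up to the equivalence under which arcs are defined — are exactly the picture-level checks the paper takes for granted against Figure \ref{fig:S(p+1,-p)_artd}.
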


\begin{figure}[h]
\begin{center}
\includegraphics[width=15cm, height=9cm, keepaspectratio, scale=1]{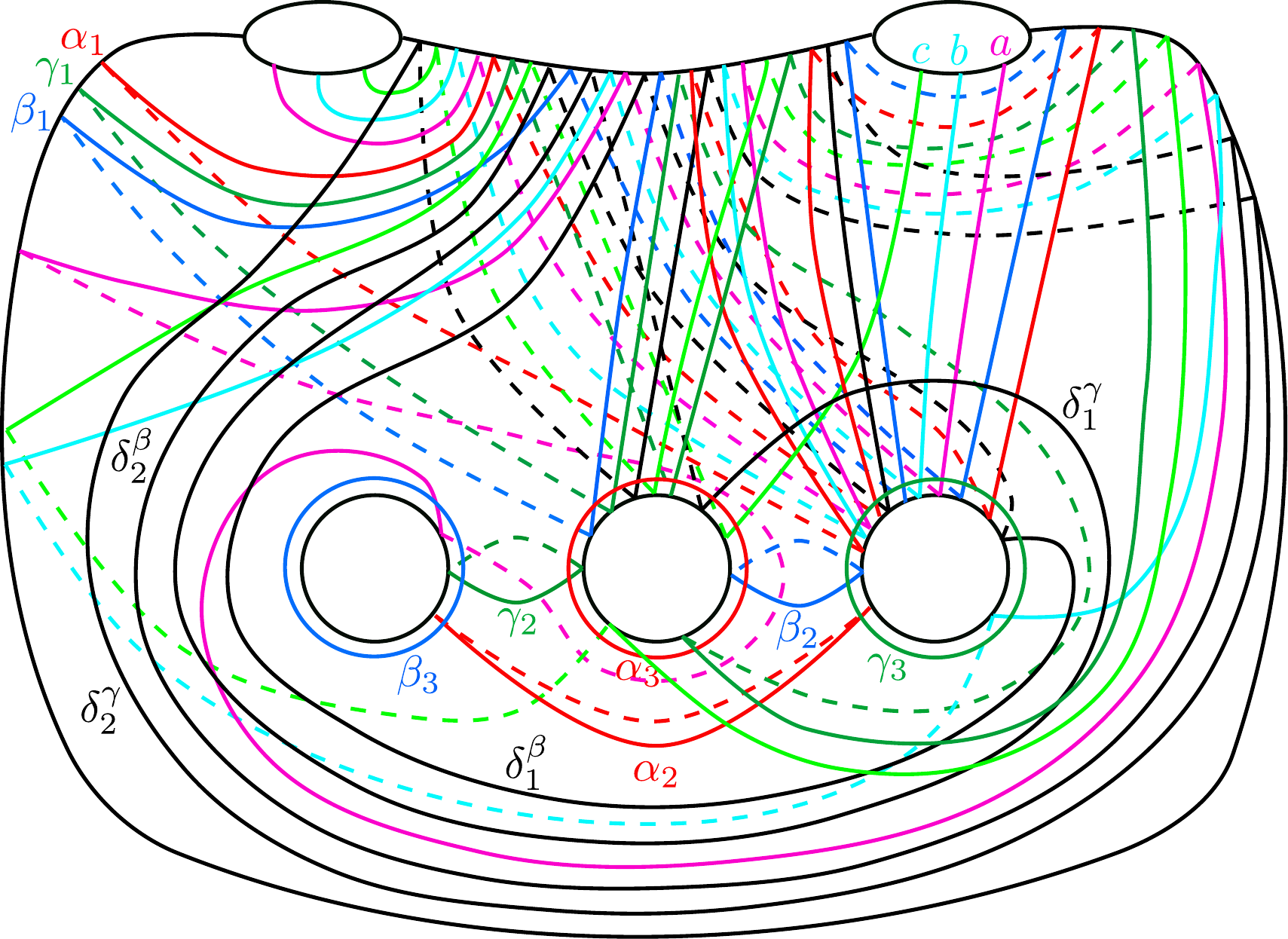}
\end{center}
\setlength{\captionmargin}{50pt}
\caption{An arced relative trisection diagram of $S^4-S(t(3,2))$ and the curves $\delta_1^{\beta}$, $\delta_2^{\beta}$, $\delta_1^{\gamma}$ and $\delta_2^{\gamma}$.}
\label{fig:S(p+1,-p)_artd}
\end{figure}

\begin{figure}[h]
\begin{center}
\begin{minipage}{0.3\hsize}%小さくすると寄る
\includegraphics[width=8cm, height=3cm, keepaspectratio, scale=1]{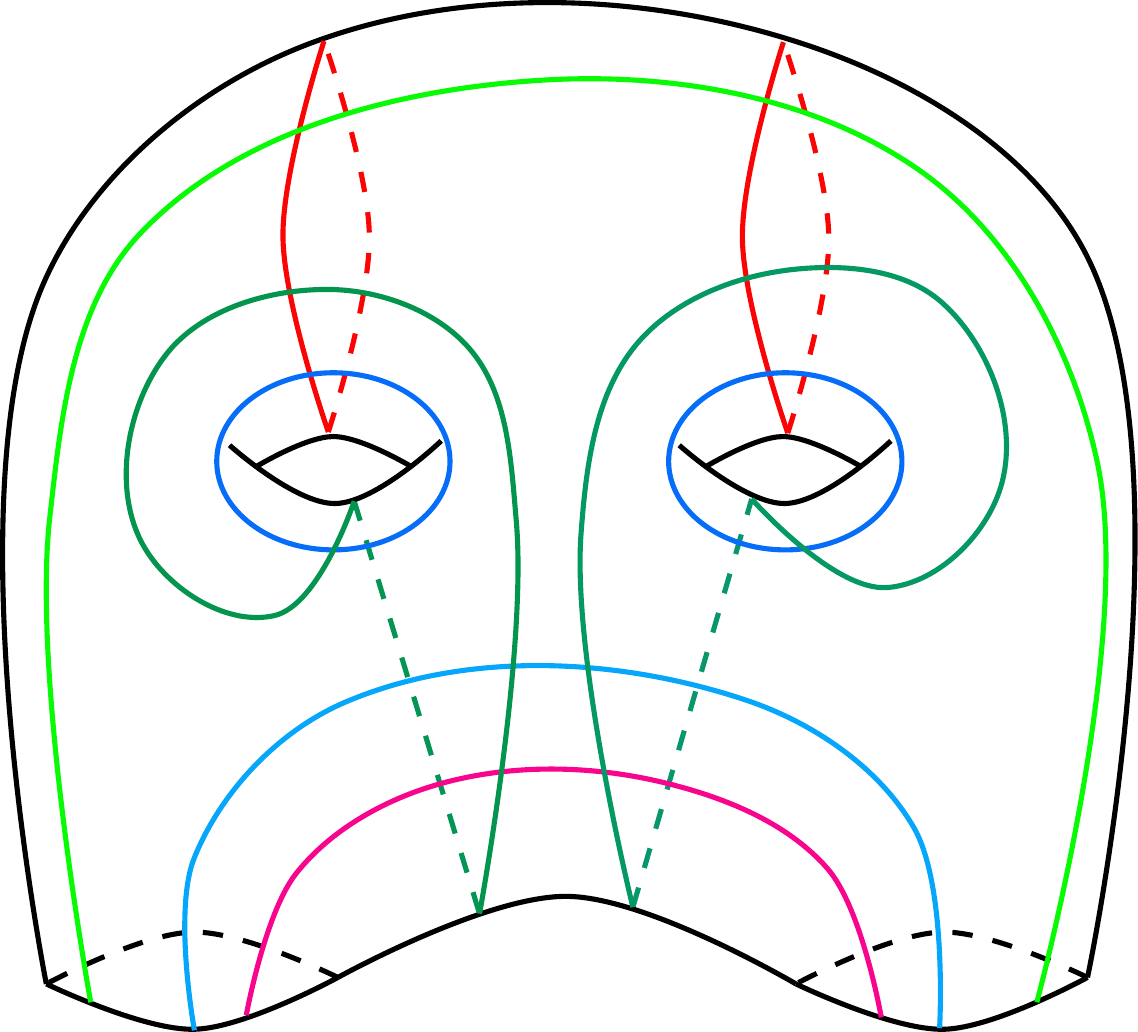}
\setlength{\captionmargin}{50pt}
\subcaption{}
\label{fig:Gluck}
\end{minipage} 
\begin{minipage}{0.5\hsize}
\includegraphics[width=8cm, height=4cm, keepaspectratio, scale=1]{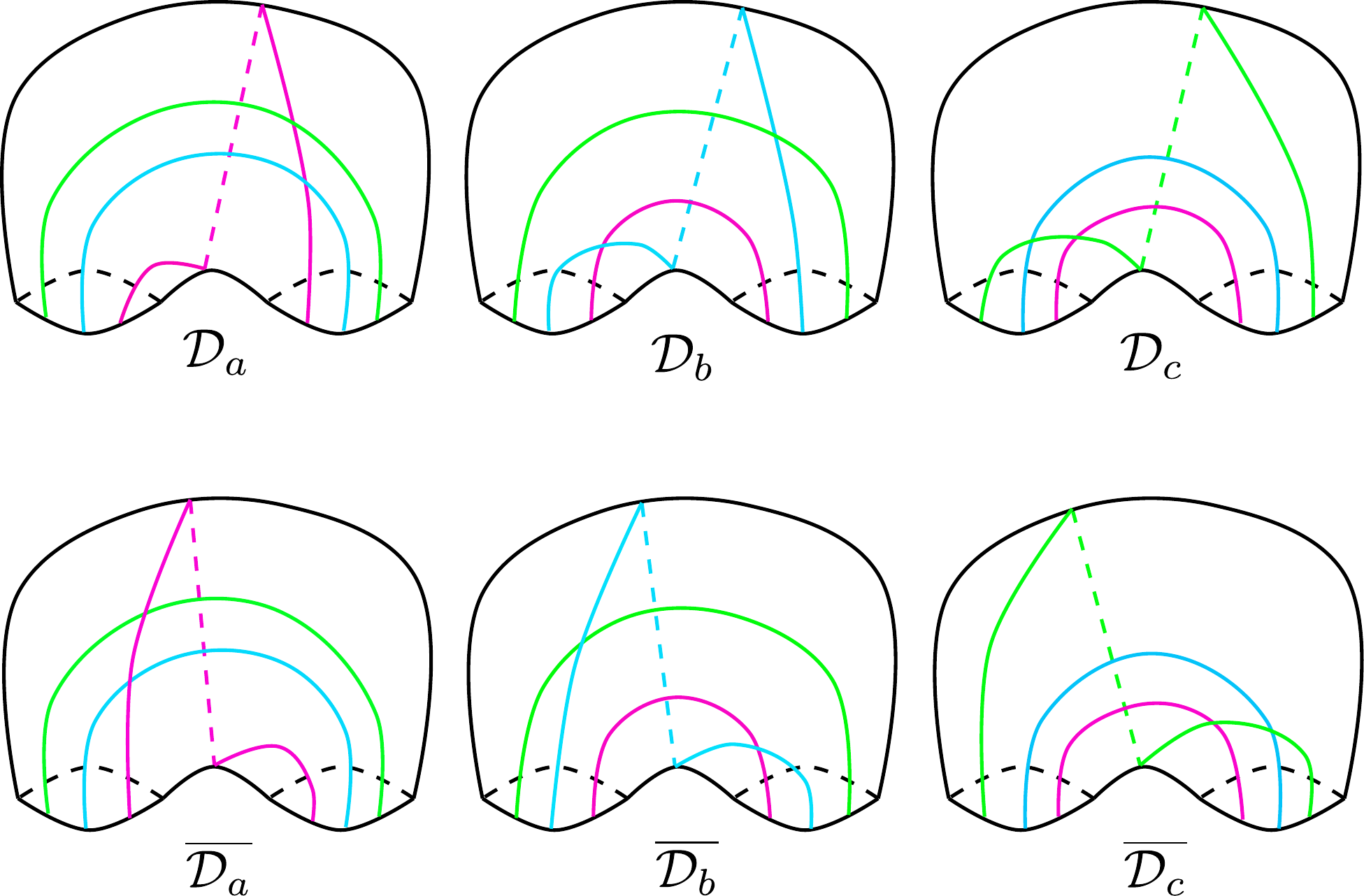}
\setlength{\captionmargin}{50pt}
\subcaption{}
\label{fig:D_x,overline{D_x}}
\end{minipage}
\end{center}
\setlength{\captionmargin}{50pt}
\caption{The diagrams used in the construction of a trisection diagram of a Gluck twisted 4-manifold.}
\label{fig:aaa}
\end{figure}%どうやれば小文字になる？どうやれば中央に来る？

Gay and Meier \cite{MR4354420} showed that a trisection diagram of $\Sigma_{K}(X)$ is obtained by gluing an arced relative trisection diagram of the exterior of $K$ and Figure \ref{fig:Gluck}. Thus, we can have the required trisection diagram as in the following lemma.

\begin{lem}\label{lem:main}
Let $(\Sigma, \alpha, \beta, \gamma)$ be a trisection diagram obtained by Gluck twisting on the spun trefoil depicted in Figure \ref{fig:main}, using the methods of \cite{MR4354420}. Then, for $p \ge 2$, $(\Sigma, \alpha', \beta', \gamma')$ is a trisection diagram obtained by Gluck twisting on $S(t(p+1,p))$ using the same methods, where 
\begin{itemize}
\item $\alpha' = (t_{\delta_1^{\beta}}^{p-2}t_{\delta_2^{\beta}}^{-(p-2)}(\alpha_1), \alpha_2, \alpha_3, t_{\delta_1^{\beta}}^{p-2}t_{\delta_2^{\beta}}^{-(p-2)}(\alpha_4), \alpha_5, \alpha_6)$,
\item $\beta' = (t_{\delta_1^{\beta}}^{p-2}t_{\delta_2^{\beta}}^{-(p-2)}(\beta_1), \beta_2, \beta_3, t_{\delta_1^{\beta}}^{p-2}t_{\delta_2^{\beta}}^{-(p-2)}(\beta_4), \beta_5, \beta_6)$,
\item $\gamma' = (t_{\delta_1^{\gamma}}^{p-2}t_{\delta_2^{\gamma}}^{-(p-2)}(\gamma_1), \gamma_2, \gamma_3, t_{\delta_1^{\gamma}}^{p-2}t_{\delta_2^{\gamma}}^{-(p-2)}(\gamma_4), \gamma_5, \gamma_6)$,
\end{itemize}
and the curves $\delta_1^{\beta}$, $\delta_2^{\beta}$, $\delta_1^{\gamma}$ and $\delta_2^{\gamma}$ are them depicted in Figure \ref{fig:main}.
\end{lem}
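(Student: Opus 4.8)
The plan is to carry out the assembly of the Gluck-twist trisection diagram exactly as in Section~3 of \cite{isoshima2023trisections}, but now feeding in the arced relative trisection diagram of $S^4-S(t(p+1,p))$ provided by Lemma~\ref{lem:(p+1,-p)_artd} in place of the one for the spun trefoil. Recall that this assembly proceeds in three stages: Meier's spinning turns a doubly-pointed Heegaard diagram of a knot $K$ into a doubly-pointed trisection diagram of $S(K)$; deleting the two base points and running the arc-drawing algorithm of \cite{MR3770114} yields an arced relative trisection diagram of $S^4-S(K)$ with connecting arcs $a,b,c$; and gluing on Figure~\ref{fig:Gluck} by Gay--Meier \cite{MR4354420} produces a trisection diagram of $\Sigma_{S(K)}(S^4)$. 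In the gluing, the surface of the arced relative trisection diagram sits as a subsurface of the output, and each handlebody curve system of the output decomposes as the old curves of the arced relative trisection diagram, together with one new curve built by completing each of the arcs $a,b,c$ across the Gluck piece, together with the curves carried in from the Gluck piece. Matching this against Figure~\ref{fig:main} for the spun trefoil, $\alpha_1,\alpha_2,\alpha_3$ (and similarly the $\beta_i,\gamma_i$) are the curves of the arced relative trisection diagram of Lemma~\ref{lem:(p+1,-p)_artd}, the curves $\alpha_4$, $\beta_4$, $\gamma_4$ are built from the arcs $a$, $b$, $c$, and $\alpha_5,\alpha_6,\beta_5,\beta_6,\gamma_5,\gamma_6$ are the Gluck-piece curves.

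The key point is then a naturality statement. The surface diffeomorphism $f_\beta := t_{\delta_1^{\beta}}^{p-2}t_{\delta_2^{\beta}}^{-(p-2)}$ (respectively $f_\gamma := t_{\delta_1^{\gamma}}^{p-2}t_{\delta_2^{\gamma}}^{-(p-2)}$) is supported, by inspection of Figures~\ref{fig:S(p+1,-p)_artd} and~\ref{fig:main}, in a subsurface disjoint both from the region along which Figure~\ref{fig:Gluck} is glued and from the Gluck-piece curves. Hence $f_\beta$ and $f_\gamma$ commute with the gluing operation of \cite{MR4354420}: applying them to the arced relative trisection diagram of Lemma~\ref{lem:(p+1,-p)_artd} and then gluing is the same as gluing first and then applying them. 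Since Lemma~\ref{lem:(p+1,-p)_artd} records that $f_\beta$ moves only $\alpha_1$, $\beta_1$ and the arcs $a,b$ while $f_\gamma$ moves only $\gamma_1$ and $c$, in the glued-up diagram $f_\beta$ moves exactly $\alpha_1,\beta_1$ and the arc-curves $\alpha_4,\beta_4$, and $f_\gamma$ moves exactly $\gamma_1,\gamma_4$, while every other curve is fixed. This is precisely the passage from $(\alpha,\beta,\gamma)$ of Figure~\ref{fig:main} to the $6$-tuples $\alpha',\beta',\gamma'$ in the statement; and since the torus-knot Heegaard diagram of Lemma~\ref{lem:(p+1,-p)_dpHd} is valid for all $p\ge2$, so is the conclusion.

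The step I expect to be the main obstacle is the explicit bookkeeping in the pictures underlying the two paragraphs above: checking that the supports of the $\delta^{\beta}$- and $\delta^{\gamma}$-twists really do avoid the gluing locus, and correctly identifying, among the six curves in each family of Figure~\ref{fig:main}, which descend from the arced relative trisection diagram, which descend from the arcs $a,b,c$ (and hence inherit the twists), and which descend from the Gluck piece (and hence do not). Since the assembly itself is word-for-word the one performed in \cite{isoshima2023trisections} for the trefoil, no genuinely new construction is required beyond this verification, and the naturality argument then upgrades it to all $p\ge2$ simultaneously.
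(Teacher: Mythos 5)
Your proposal matches the paper's (implicit) proof: Lemma~\ref{lem:main} is obtained exactly by gluing the arced relative trisection diagram of Lemma~\ref{lem:(p+1,-p)_artd} to the Gluck piece of Figure~\ref{fig:Gluck} via the method of \cite{MR4354420}, with the Dehn-twist diffeomorphisms passing through the gluing because their supports avoid the gluing locus. The paper does not write out a separate proof paragraph for this lemma; your naturality argument supplies exactly the bookkeeping that the paper leaves to the reader after citing the gluing theorem.
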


\begin{figure}[h]
\begin{center}
\includegraphics[width=8cm, height=7cm, keepaspectratio, scale=1]{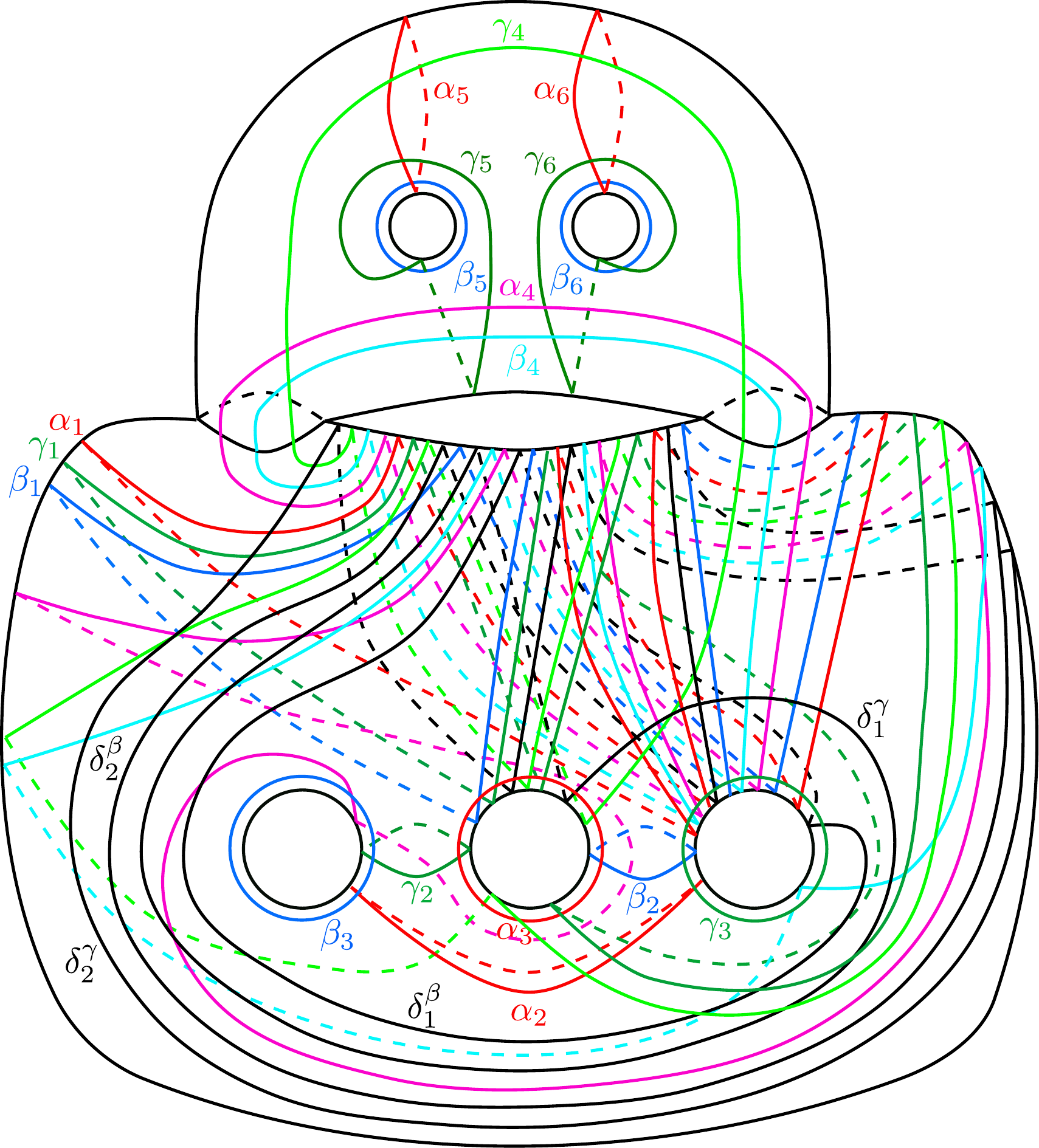}
\end{center}
\setlength{\captionmargin}{50pt}
\caption{A trisection diagram obtained by Gluck twisting on the spun trefoil and the curves $\delta_1^{\beta}$, $\delta_2^{\beta}$, $\delta_1^{\gamma}$ and $\delta_2^{\gamma}$.}
\label{fig:main}
\end{figure}

%ここから主結果の証明に入る前にこの図を何回かそれぞれの色でスライドしてるから，色別に分けてそのスライドについて描くなり少し触れるなりした方が良いと思う．

In this paper, we show in Section \ref{sec:main} that this trisection diagram is standard for all $p \ge 2$. So, we may destabilize this trisection diagram at this time if we can do it. Destabilizing Figure \ref{fig:Gluck} twice can provide us with one of Figure \ref{fig:D_x,overline{D_x}}. Thus, it suffices to show that Figure \ref{fig:main_d} is standard, which is obtained by gluing Figure \ref{fig:S(p+1,-p)_artd} and $\overline{\mathcal{D}_b}$ in Figure \ref{fig:D_x,overline{D_x}}. Note that we depict Figure \ref{fig:main_dx} as the $\alpha$, $\beta$ and $\gamma$ curves in Figure \ref{fig:main_d} for clarity.

\begin{figure}[h]
\begin{center}
\includegraphics[width=8cm, height=7cm, keepaspectratio, scale=1]{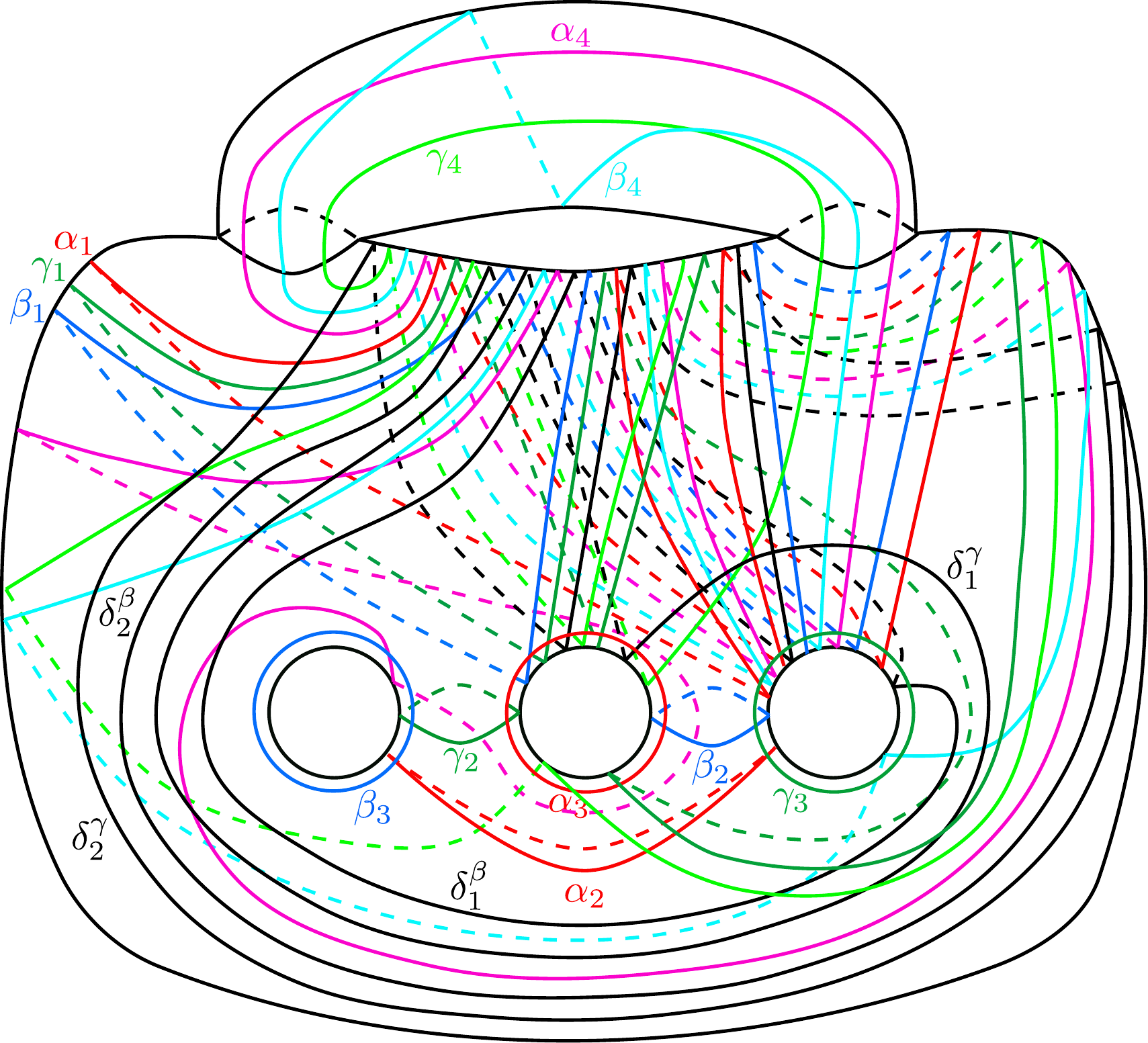}
\end{center}
\setlength{\captionmargin}{50pt}
\caption{A trisection diagram obtained by Gluck twisting on the spun trefoil and the curves $\delta_1^{\beta}$, $\delta_2^{\beta}$, $\delta_1^{\gamma}$ and $\delta_2^{\gamma}$.}
\label{fig:main_d}
\end{figure}

\begin{figure}[h]
%\begin{tabular}{cc}
\begin{minipage}{0.5\hsize}%小さくすると寄る
\begin{center}
\includegraphics[width=8cm, height=6cm, keepaspectratio, scale=1]{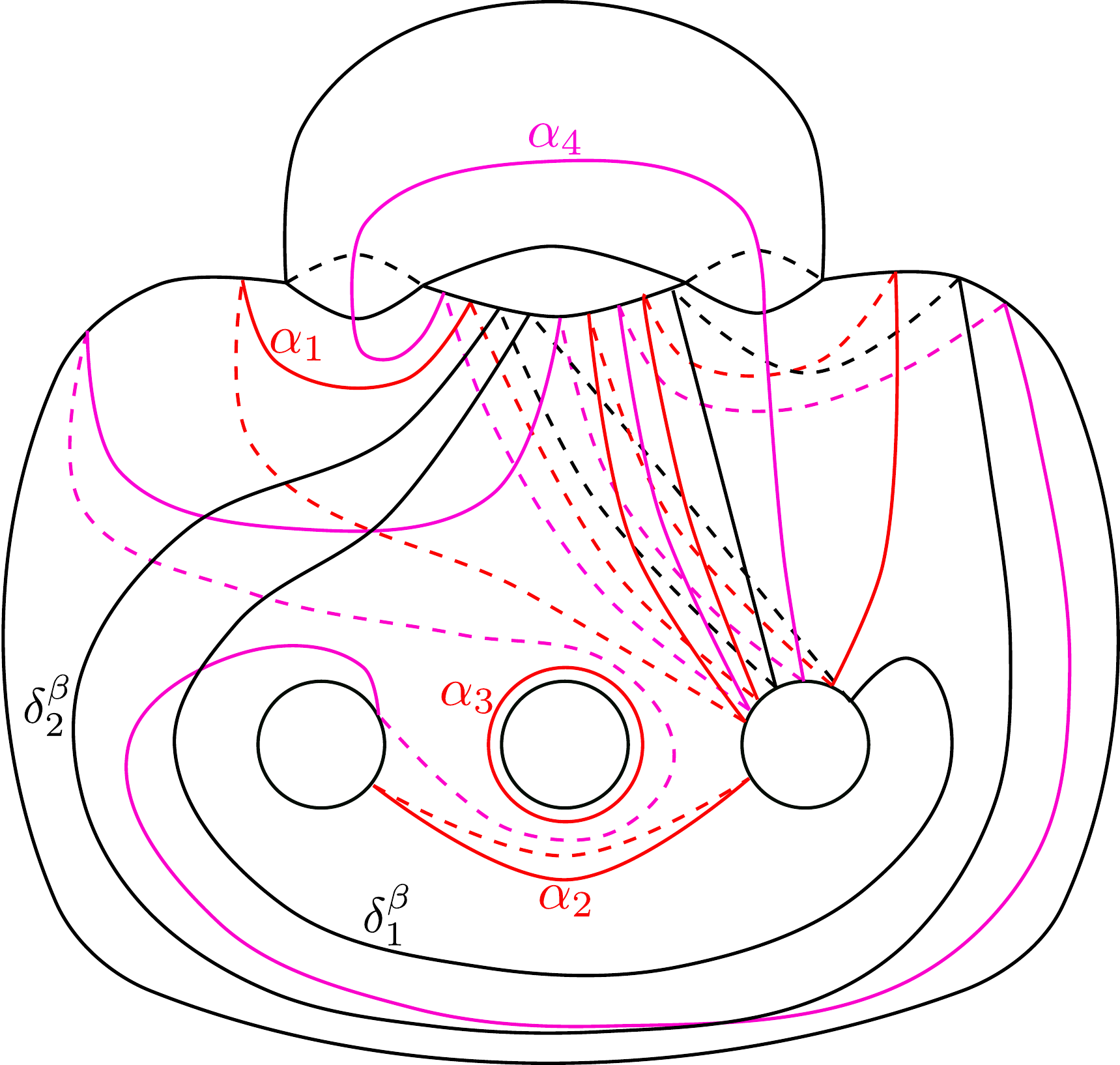}
%subcaption{}
\end{center}
\end{minipage} 
\begin{minipage}{0.49\hsize}
\begin{center}
\includegraphics[width=8cm, height=6cm, keepaspectratio, scale=1]{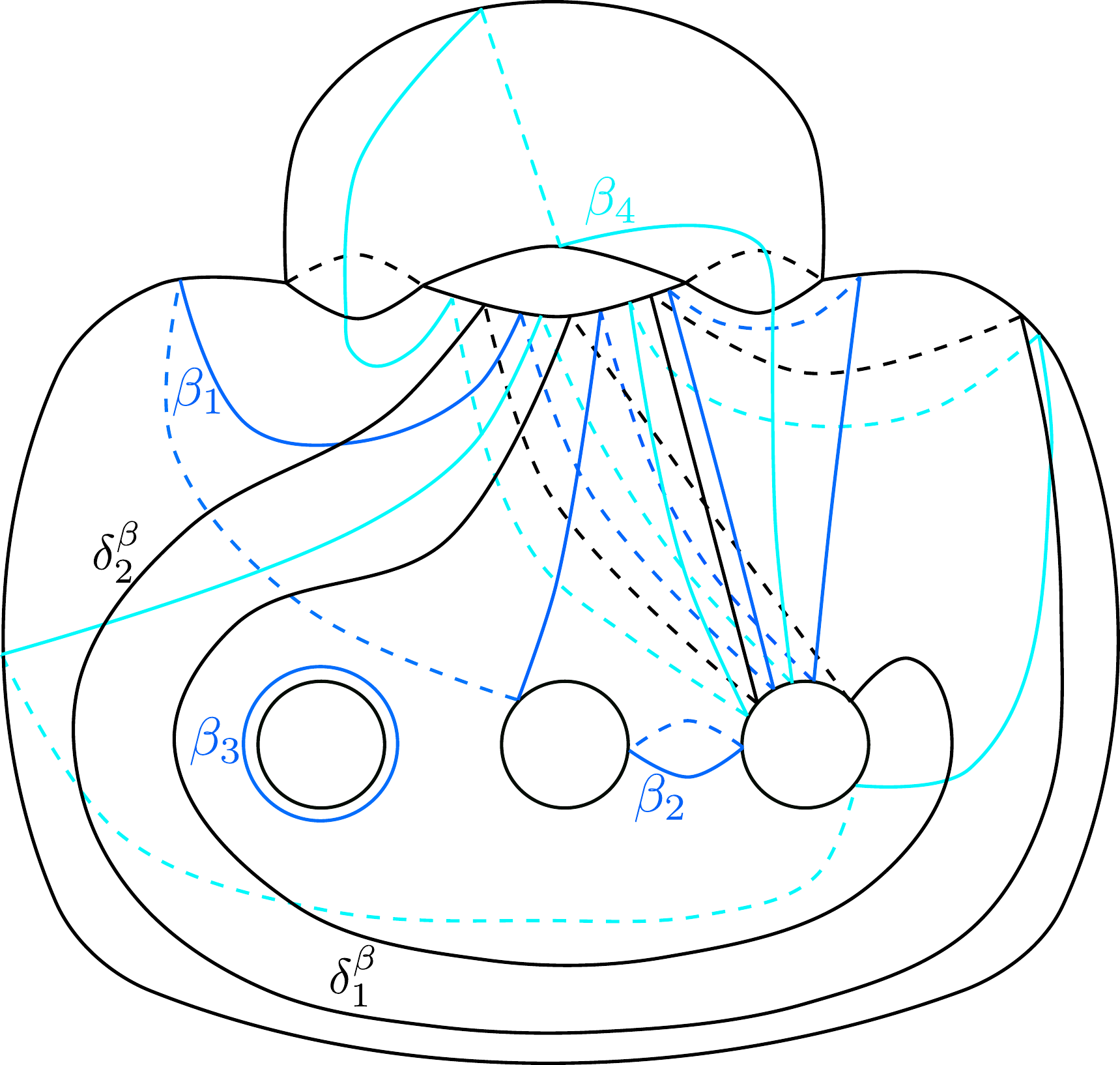}
\end{center}
\end{minipage} 
\begin{minipage}{0.5\hsize}
\begin{center}
\includegraphics[width=8cm, height=6cm, keepaspectratio, scale=1]{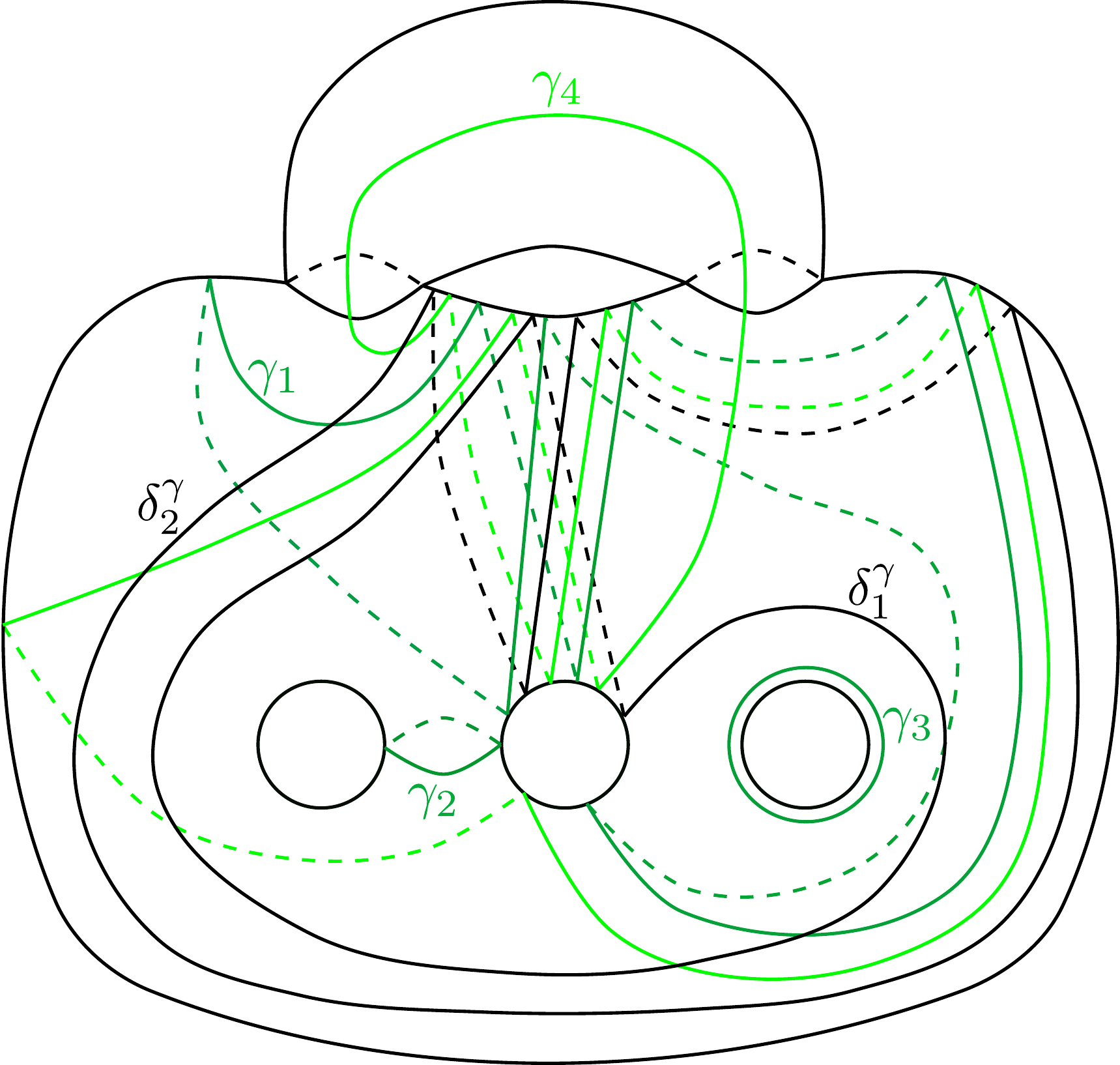}
\end{center}
\end{minipage} 
%\end{tabular}
\setlength{\captionmargin}{50pt}
\caption{These diagrams depict the $\alpha$, $\beta$ and $\gamma$ curves in Figure \ref{fig:main_d} for visual clarity.}
\label{fig:main_dx}
\end{figure}

Let $c_1 \to c_2$ denote a handle slide $c_1$ over $c_2$. In Figure \ref{fig:main_d} or \ref{fig:main_dx}, we perform the following handle slides, namely, $\alpha_4 \to \alpha_3$, $\alpha_1 \to (\alpha_2, \alpha_3)$, $\alpha_4 \to \alpha_1$, $\alpha_4 \to \alpha_3$ twice, $\beta_4 \to \beta_1$, $\beta_1 \to \beta_4$, and $\gamma_4 \to \gamma_1$ twice. Then, Figure \ref{fig:D_p} is obtained.

\begin{figure}[h]
%\begin{tabular}{cc}
\begin{minipage}{0.5\hsize}%小さくすると寄る
\begin{center}
\includegraphics[width=8cm, height=6cm, keepaspectratio, scale=1]{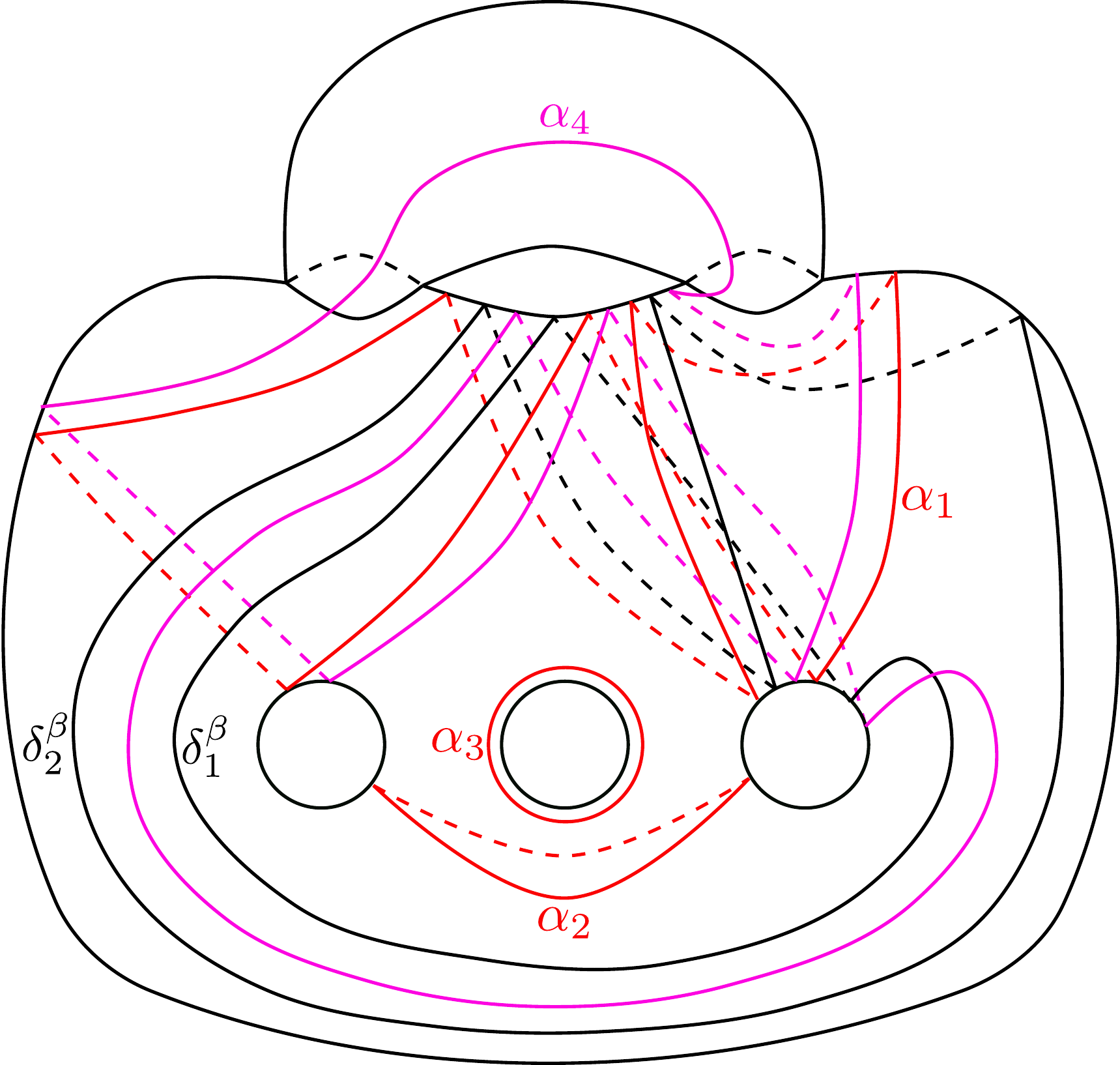}
\end{center}
\end{minipage} 
\begin{minipage}{0.49\hsize}
\begin{center}
\includegraphics[width=8cm, height=6cm, keepaspectratio, scale=1]{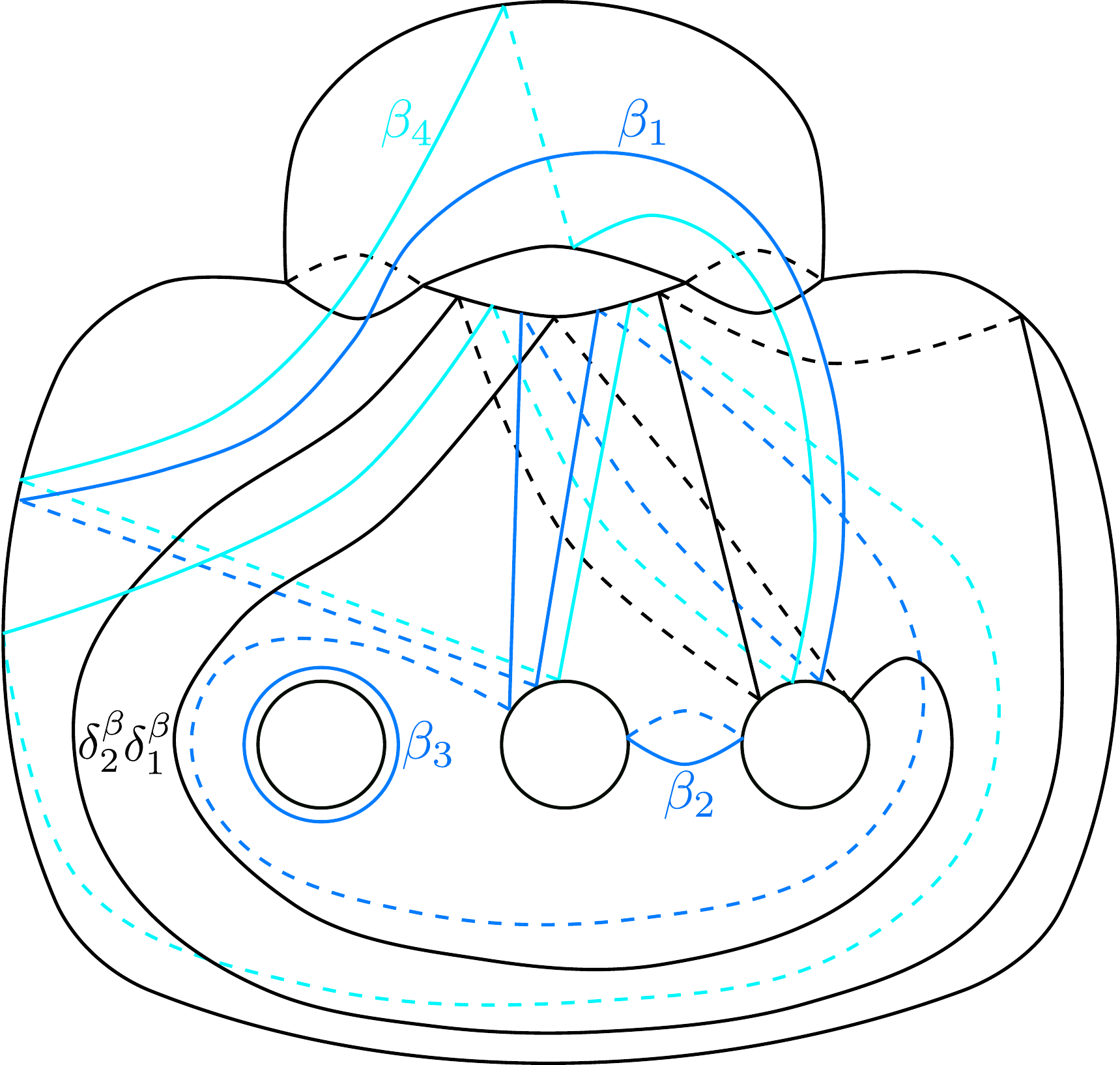}
\end{center}
\end{minipage} 
\begin{minipage}{0.5\hsize}
\begin{center}
\includegraphics[width=8cm, height=6cm, keepaspectratio, scale=1]{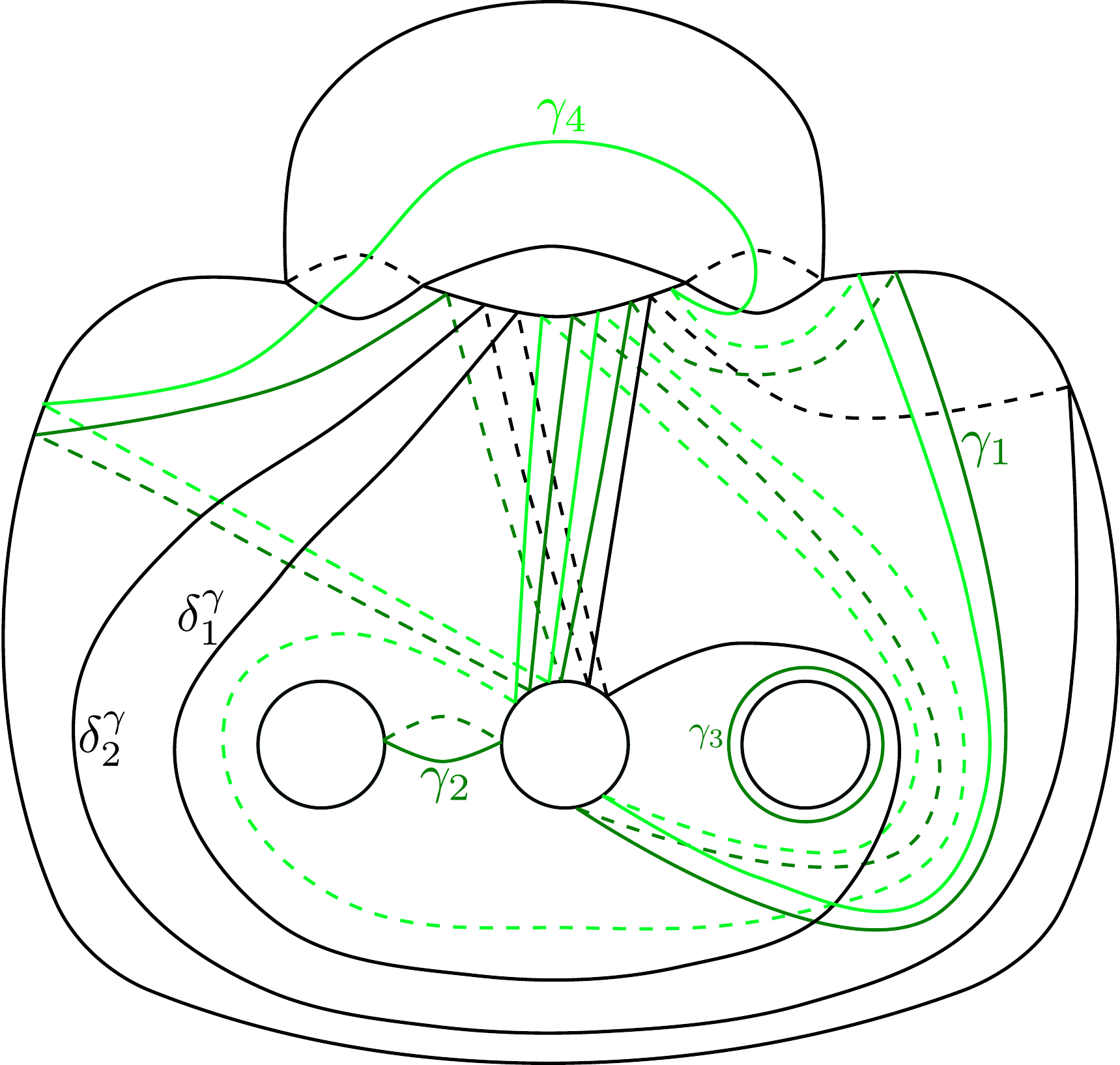}
\end{center}
\end{minipage} 
%\end{tabular}
\setlength{\captionmargin}{50pt}
\caption{The trisection diagram $\mathcal{D}_p$ of $\Sigma_{S(t(p+1,p))}(S^4)$.}
\label{fig:D_p}
\end{figure}

Let $\mathcal{D}_p$ denote this trisection diagram of $\Sigma_{S(t(p+1,p))}(S^4)$, which is obtained by overlaying all of Figure \ref{fig:D_p}.

%これ，４章で書いた方が良いのでは？

\section{main theorem}\label{sec:main}
%清書のノートの通り，補題として順に変形をしていく感じで書きたい．
In this section, we show the following main theorem which is mentioned in Section \ref{sec:intro}.

\begin{thm}\label{thm:main}
The trisection diagram $\mathcal{D}_p$ ($p \ge 2$) is standard.
\end{thm}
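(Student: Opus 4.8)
The plan is to prove that $\mathcal{D}_p$ is standard by an induction on $p$, with the base case $p=2$ being essentially the result of \cite{isoshima2023trisections}. The key point is that the curves of $\mathcal{D}_p$ differ from those of $\mathcal{D}_{p-1}$ by a single pair of Dehn twists $t_{\delta_1}t_{\delta_2}^{-1}$ applied to a small, controlled collection of curves (the ``$\alpha_1,\alpha_4$'', ``$\beta_1,\beta_4$'', ``$\gamma_1,\gamma_4$'' curves in the notation of Lemma \ref{lem:main}, after the handle slides that produce Figure \ref{fig:D_p}). So the inductive step amounts to showing: if $\mathcal{D}_{p-1}$ can be reduced to the stabilized genus-$0$ diagram by surface diffeomorphisms and handle slides, then so can $\mathcal{D}_p$. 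The seesaw lemma (Lemma \ref{lem:seesaw}) is the engine here — I would first isolate exactly what ``extra'' structure the $\delta$-twists introduce, and show that this extra structure can be absorbed by a handle slide that trades the effect of $t_{\delta_1}$ against the effect of $t_{\delta_2}^{-1}$ (this is the ``seesaw'': pushing one side down lifts the other), returning the diagram to the form $\mathcal{D}_{p-1}$ up to relabeling.

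Concretely, I would carry this out in the following order. First, fix a genus (here $6$, the genus of $\mathcal{D}_p$) and describe $\mathcal{D}_p$ combinatorially in terms of the intersection data of the $\alpha'$, $\beta'$, $\gamma'$ curves with fixed reference curves, reading everything off Figure \ref{fig:D_p}; in particular record where the curves $\delta_1^\beta,\delta_2^\beta,\delta_1^\gamma,\delta_2^\gamma$ sit relative to the affected curves. Second, perform the explicit sequence of Dehn twists and handle slides listed just before Figure \ref{fig:D_p} (the handle slides $\alpha_4\to\alpha_3$, $\alpha_1\to(\alpha_2,\alpha_3)$, $\alpha_4\to\alpha_1$, $\alpha_4\to\alpha_3$ twice, $\beta_4\to\beta_1$, $\beta_1\to\beta_4$, $\gamma_4\to\gamma_1$ twice) and verify they are legal — i.e. that at each stage we are sliding a curve in one family over another curve in the same family — and that they bring the diagram into the normal form depicted in Figure \ref{fig:D_p}. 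Third, observe that in this normal form the dependence on $p$ is confined to a sub-configuration on which $t_{\delta_1}^{p-2}t_{\delta_2^{-(p-2)}}$ acts, and that decreasing the exponent by $1$ (i.e. going from $\mathcal{D}_p$ to $\mathcal{D}_{p-1}$) is realized by one more round of handle slides — this is precisely the content I would package as the seesaw lemma. Fourth, invoke the base case: $\mathcal{D}_2$ is the trisection diagram treated in \cite{isoshima2023trisections} (the spun trefoil case), which is standard; chaining the inductive step $p \mapsto p-1$ down to $p=2$ finishes the argument.

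The main obstacle I expect is the bookkeeping in step two and step three: verifying that the long list of handle slides is actually realizable in the order given (no illegal cross-family slide sneaks in, and each slide is along an embedded band avoiding the other curves) and then pinning down the seesaw lemma so that it genuinely reduces the exponent without introducing new curves or changing the genus. This is where a careful figure-by-figure chase is unavoidable, and where the argument is most likely to hide a subtlety — for instance, a handle slide that is only valid after an auxiliary isotopy, or a case distinction depending on the parity of $p$. I would therefore state the seesaw lemma in a self-contained combinatorial form (about abstract trisection diagrams containing a specified $\delta$-pattern) and prove it once, so that the inductive step becomes a clean application rather than a repetition of the figure chase. A secondary, more cosmetic, point is to make sure the destabilizations mentioned after Lemma \ref{lem:main} (destabilizing Figure \ref{fig:Gluck} twice, reducing to gluing with $\overline{\mathcal{D}_b}$) are compatible with ``standard'' as defined — i.e. that going from $\mathcal{D}_p$ back up to a stabilization of the genus-$0$ diagram only uses surface diffeomorphisms, handle slides and (de)stabilizations in the allowed directions; but this is routine given the definition in Section \ref{sec:intro}.
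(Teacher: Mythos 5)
Your instinct that the $p$-dependence is localized and should be peeled off by an inductive ``seesaw'' is the right high-level idea, but the proposal as written has a genuine structural gap: you propose to show directly that $\mathcal{D}_p$ is related to $\mathcal{D}_{p-1}$ (both genus $4$ — not $6$ as you wrote) by surface diffeomorphisms and handle slides, and then chain down to the $p=2$ base case from \cite{isoshima2023trisections}. That reduction is exactly the thing you never actually argue; you flag it as ``the content I would package as the seesaw lemma,'' but you do not exhibit the deformation, and there is no reason given that $\mathcal{D}_p$ can be turned into $\mathcal{D}_{p-1}$ without a detour. The paper does something materially different. It does \emph{not} reduce $\mathcal{D}_p$ to $\mathcal{D}_{p-1}$. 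Instead, it first normalizes $\mathcal{D}_p$ for arbitrary $p$ by applying $t_\epsilon^{p-2}t_\delta^{-1}$ together with carefully interleaved handle slides (the trick, recorded in Figure \ref{fig:key_idea}, of sliding over a curve of the same family \emph{before} twisting, so that the twist can pass through cleanly), then performs a single Naylor-style destabilization down to genus $3$. Only in that genus-$3$ normal form is the $p$-dependence concentrated in a single Dehn twist $t_{f_{p-2}(m)}$ acting on the $\gamma$ family. The seesaw lemma then trades one power of that twist on $\gamma$ for a twist $t_{m'}$ on $\beta$, i.e.\ $(\alpha,\beta,t_{f_{p-2}(m)}(\gamma)) = (\alpha,t_{m'}(\beta),t_{f_{p-3}(m)}(\gamma))$; iterating $p-2$ times leaves $(\alpha,t_{m'}^{p-2}(\beta),t_m(\gamma))$, and removing the residual twists finishes, with no appeal to the $p=2$ case of the prior paper at all.

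So the two specific ideas you are missing are: (i) the normalization-then-destabilize step that collapses the picture to genus $3$ before any induction begins — this is where most of the work and the ``handle slide before Dehn twist'' trick live, and without it the inductive step is not well-posed; and (ii) the actual statement of the seesaw: it is not a trade between $t_{\delta_1}$ and $t_{\delta_2}^{-1}$, it is a transfer of a twist along a fixed curve $f_i(m)$ from the $\gamma$ family to the $\beta$ family (via handle slides of $f_i(m)$ over $\alpha_2$, $\beta_2$, $\gamma_2$). Your plan to ``state the seesaw lemma in a self-contained combinatorial form and prove it once'' is reasonable and matches the paper's spirit, but as it stands the proposal replaces the hard content (steps 1--2 of the paper) with an unproved assertion that $\mathcal{D}_p$ reduces to $\mathcal{D}_{p-1}$.
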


\begin{rem}
The key idea of the proof is handle sliding curves used when we perform Dehn twists over each family curves. See Figure \ref{fig:key_idea} for details.
\end{rem}

\begin{figure}[h]
\begin{center}
\includegraphics[width=12.5cm, height=10cm, keepaspectratio, scale=1]{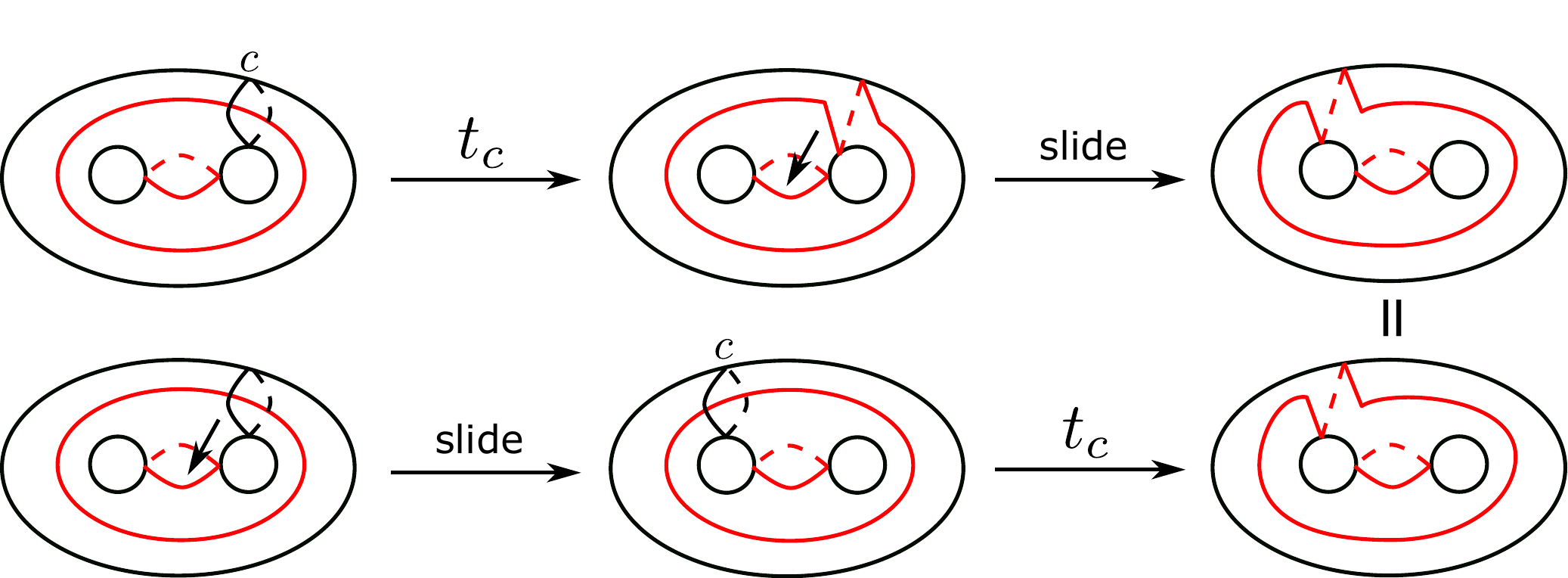}
\end{center}
\setlength{\captionmargin}{50pt}
\caption{This figure describes a simple example of the key idea of the proof of Theorem \ref{thm:main}. In the upper column, we perform firstly the Dehn twist, and then handle sliding. This is a usual deformation for tisection diagrams. In the lower column which describes a new deformation, we firstly handle slide $c$ over the $\alpha$ curve, and then perform the Dehn twist. The resulting two diagrams are the same. }
\label{fig:key_idea}
\end{figure}

We proof this theorem by deforming $\mathcal{D}_p$ step by step.  The proof is mainly divided into three steps: The first step is deforming the trisection diagram appropriately so that we can destabilize it. The second step is destabilizing the trisection diagram in the first step. The last step is that we reduce the destabilized trisection diagram to the stabilization of the genus 0 trisection diagram by an inductive argument. As the inductive argument, we use a key lemma which we call the seesaw lemma. Note that we do not change names of simple closed curves before and after Dehn twists and handle slidings.

\subsection*{step1: Deform $\mathcal{D}_p$ so that we can destabilize it}
First, we deform the $\beta$-curves to destabilize the trisection diagram.

\begin{lem}\label{lem:beta_d}%\beta_4は\beta_1上でスライド済のものを想定．
Let $(\Sigma;\beta_1^p,\beta_2^p,\beta_3^p,\beta_4^p)$ be the $\beta$-curves in $\mathcal{D}_p$. %Namely, $\beta_1^p=t_{\delta_1^{\beta}}^{p-2}t_{\delta_2^{\beta}}^{-(p-2)}(\beta_1)$, $\beta_2^p=\beta_2$, $\beta_3^p=\beta_3$ and $\beta_4^p=t_{\delta_1^{\beta}}^{p-2}t_{\delta_2^{\beta}}^{-(p-2)}(\beta_4)$. 
Then, the following holds for $\beta_1^p$ and $\beta_4^p$:
\begin{itemize}
\item $t_\epsilon^{p-2}t_\delta^{-1}(\beta_4^p)=m$,
\item $t_\epsilon^{p-2}t_\delta^{-1}(\beta_1^p)=t_{\delta_1^{\beta}}^{p-2}(m')$ up to handle slide over $\beta_2$ and $\beta_3$,
\end{itemize}
where $\beta_2$, $\beta_3$, $\delta_1^{\beta}$, $\delta$, $\epsilon$, $m$ and $m'$ are the curves depicted in Figure \ref{fig:sec4.1_b}.
\end{lem}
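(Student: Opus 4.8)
The plan is to verify the two claimed identities by tracking the effect of the Dehn twists $t_\epsilon^{p-2}t_\delta^{-1}$ on the curves $\beta_4^p$ and $\beta_1^p$ directly in the surface $\Sigma$ pictured in Figure \ref{fig:sec4.1_b}. The point is that, by construction in Lemma \ref{lem:main} and the handle slides performed to obtain $\mathcal{D}_p$, the curves $\beta_1^p$ and $\beta_4^p$ are already of the form $t_{\delta_1^\beta}^{p-2}t_{\delta_2^\beta}^{-(p-2)}$ applied to fixed curves (together with handle slides over $\beta_2,\beta_3$). So the first task is to identify $\delta,\epsilon$ in Figure \ref{fig:sec4.1_b} with $\delta_2^\beta$ and a curve isotopic to $\delta_1^\beta$ (or a curve differing from it by a slide over $\beta_2$ or $\beta_3$), so that composing with $t_\epsilon^{p-2}t_\delta^{-1}$ cancels all but a bounded amount of the twisting. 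Concretely, for $\beta_4^p$ I would show $\beta_4^p = t_{\delta_1^\beta}^{p-2}t_{\delta_2^\beta}^{-(p-2)}(\text{some curve})$ exactly, and that $t_\epsilon^{p-2}$ inverts the $t_{\delta_1^\beta}^{p-2}$ part while $t_\delta^{-1}$ handles the residual single twist, leaving the meridian $m$.

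The main steps, in order: (1) read off from Figures \ref{fig:main} and \ref{fig:D_p} explicit representatives of $\beta_1^p,\beta_4^p$ as images of fixed curves under powers of $t_{\delta_1^\beta},t_{\delta_2^\beta}$, being careful about the handle slides $\beta_4\to\beta_1$, $\beta_1\to\beta_4$ used in passing to $\mathcal{D}_p$; (2) locate $\delta$ and $\epsilon$ of Figure \ref{fig:sec4.1_b} relative to those twisting curves, checking geometric intersection numbers with $\beta_2,\beta_3$ so that the "up to handle slide over $\beta_2$ and $\beta_3$" clause is justified for the $\beta_1^p$ statement; (3) compute $t_\epsilon^{p-2}t_\delta^{-1}(\beta_4^p)$ using the twist relations, obtaining $m$ on the nose; (4) compute $t_\epsilon^{p-2}t_\delta^{-1}(\beta_1^p)$, where one factor $t_{\delta_1^\beta}^{p-2}$ survives uncancelled, yielding $t_{\delta_1^\beta}^{p-2}(m')$ after the permitted slides over $\beta_2,\beta_3$. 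Throughout, the computations are best organized by chasing arcs in the planar picture of $\Sigma$ rather than by algebra in the mapping class group.

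I expect the main obstacle to be step (2): making precise the bookkeeping of which handle slides over $\beta_2$ and $\beta_3$ are needed, and confirming that $\epsilon$ (respectively $\delta$) really is the curve whose twists cancel against the construction twists — a mismatch of even one Dehn twist, or of orientation/sign, propagates through the whole argument. The asymmetry between the two bullet points (one holds exactly, one only up to slides, and one retains a leftover $t_{\delta_1^\beta}^{p-2}$) suggests that $\beta_1^p$ and $\beta_4^p$ sit differently with respect to the twisting region, so I would treat them as genuinely separate computations rather than hoping for a symmetric shortcut. Once the correct curves are identified, the twist-cancellation itself is routine: $t_\epsilon^{p-2}t_{\delta_1^\beta}^{-(p-2)} = \mathrm{id}$ when $\epsilon$ is isotopic to $\delta_1^\beta$, and the remaining single twists act on a meridian in a way that is visible from the figure.
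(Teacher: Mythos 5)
Your overall plan — write $\beta_1^p,\beta_4^p$ as $t_{\delta_1^\beta}^{p-2}t_{\delta_2^\beta}^{-(p-2)}$ applied to fixed curves and then arrange a cancellation against $t_\epsilon^{p-2}t_\delta^{-1}$ — is the right shape, and it matches the paper's. But the specific cancellation you propose is not the one that works, and in fact cannot work as stated.

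You anticipate that $\epsilon$ is isotopic to $\delta_1^\beta$ (so that ``$t_\epsilon^{p-2}$ inverts the $t_{\delta_1^\beta}^{p-2}$ part''). This fails for two reasons. First, the signs are wrong: both $t_\epsilon^{p-2}$ and $t_{\delta_1^\beta}^{p-2}$ are positive powers, so if $\epsilon\simeq\delta_1^\beta$ you would get $t_{\delta_1^\beta}^{2(p-2)}$, not the identity; a positive twist can only cancel a negatively-powered twist, and the only negatively-powered piece available is $t_{\delta_2^\beta}^{-(p-2)}$. Second, the lemma itself tells you that a residual $t_{\delta_1^\beta}^{p-2}$ survives in the $\beta_1^p$ case, which is a clear signal that $\delta_1^\beta$ is \emph{not} the curve being cancelled. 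What actually happens in the paper is a conjugation: since $\delta_1^\beta$ is disjoint from $\epsilon$ and $\delta$, one commutes $t_{\delta_1^\beta}^{p-2}$ to the front and then uses $t_\delta\, t_\epsilon^{p-2}\, t_\delta^{-1}=t_{t_\delta(\epsilon)}^{p-2}$ together with the geometric fact $t_\delta(\epsilon)=\delta_2^\beta$. The resulting $t_{\delta_2^\beta}^{p-2}$ cancels $t_{\delta_2^\beta}^{-(p-2)}$, leaving $t_\delta^{-1}t_{\delta_1^\beta}^{p-2}$; for $\beta_4$ the latter acts trivially because $i(\delta_1^\beta,\beta_4)=0$ and $m=t_\delta^{-1}(\beta_4)$, while for $\beta_1$ the $t_{\delta_1^\beta}^{p-2}$ survives, and $t_\delta^{-1}(\beta_1)$ becomes $m'$ after slides over $\beta_2,\beta_3$. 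So the curve you should be identifying is $t_\delta(\epsilon)$ with $\delta_2^\beta$, not $\epsilon$ with $\delta_1^\beta$, nor $\delta$ with $\delta_2^\beta$. Your suggestion to prefer arc-chasing over mapping-class-group algebra also points the wrong way here: the conjugation relation $t_\delta t_\epsilon t_\delta^{-1}=t_{t_\delta(\epsilon)}$ is exactly what makes the computation short and sign-safe.
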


\begin{proof}
First of all, we show the former statement. Since $m=t_{\delta}^{-1}(\beta_4)$, we show $t_{\epsilon}^{p-2} t_{\delta}^{-1} t_{\delta_1^{\beta}}^{p-2}t_{\delta_2^{\beta}}^{-(p-2)}(\beta_4) = t_{\delta}^{-1}(\beta_4)$. Since the geometric intersection number of $\delta_1^{\beta}$ and the other curves is 0, $t_{\epsilon}^{p-2} t_{\delta}^{-1} t_{\delta_1^{\beta}}^{p-2}t_{\delta_2^{\beta}}^{-(p-2)}(\beta_4) = t_{\delta_1^{\beta}}^{p-2} t_{\epsilon}^{p-2} t_{\delta}^{-1} t_{\delta_2^{\beta}}^{-(p-2)}(\beta_4)$. Then, 
\begin{eqnarray*}
t_{\delta_1^{\beta}}^{p-2} t_{\epsilon}^{p-2} t_{\delta}^{-1} t_{\delta_2^{\beta}}^{-(p-2)}(\beta_4) &=& t_{\delta_1^{\beta}}^{p-2} t_{\delta}^{-1} t_{\delta} t_{\epsilon}^{p-2} t_{\delta}^{-1} t_{\delta_2^{\beta}}^{-(p-2)}(\beta_4)\\
&=& t_{\delta_1^{\beta}}^{p-2} t_{\delta}^{-1} t_{t_{\delta}(\epsilon)}^{p-2}  t_{\delta_2^{\beta}}^{-(p-2)}(\beta_4)\\
&=& t_{\delta_1^{\beta}}^{p-2} t_{\delta}^{-1} t_{\delta_2^{\beta}}^{p-2}  t_{\delta_2^{\beta}}^{-(p-2)}(\beta_4)\\
&=& t_{\delta}^{-1} t_{\delta_1^{\beta}}^{p-2}(\beta_4).
\end{eqnarray*}
Thus, it suffices to show $t_{\delta}^{-1} t_{\delta_1^{\beta}}^{p-2}(\beta_4) = t_{\delta}^{-1}(\beta_4)$. This holds obviously since $i(\delta_1^{\beta}, \beta_4) = 0$, where $i(\delta_1^{\beta}, \beta_4)$ is the geometric intersection number of $\delta_1^{\beta}$ and $\beta_4$.

Next, we show the latter statement. 
\begin{eqnarray*}
t_{\delta_1^{\beta}}^{-(p-2)}t_\epsilon^{p-2}t_\delta^{-1}(\beta_1^p) &=& t_{\delta_1^{\beta}}^{-(p-2)}t_\epsilon^{p-2}t_\delta^{-1}t_{\delta_1^{\beta}}^{p-2}t_{\delta_2^{\beta}}^{-(p-2)}(\beta_1)\\
&=& t_\delta^{-1}t_{\delta}t_\epsilon^{p-2}t_\delta^{-1}t_{\delta_2^{\beta}}^{-(p-2)}(\beta_1)\\
&=& t_\delta^{-1}t_{t_{\delta}(\epsilon)}^{p-2}t_{\delta_2^{\beta}}^{-(p-2)}(\beta_1)\\
&=& t_\delta^{-1}t_{\delta_2^{\beta}}^{p-2}t_{\delta_2^{\beta}}^{-(p-2)}(\beta_1)\\
&=& t_{\delta}^{-1}(\beta_1).
\end{eqnarray*}
We see from Figure \ref{fig:sec4.1_b} that $t_{\delta}^{-1}(\beta_1)$ can be parallel to $m^{'}$ after handle sliding over both $\beta_2$ and $\beta_3$.
\end{proof}

\begin{figure}[h]
\begin{center}
\includegraphics[width=7cm, height=7cm, keepaspectratio, scale=1]{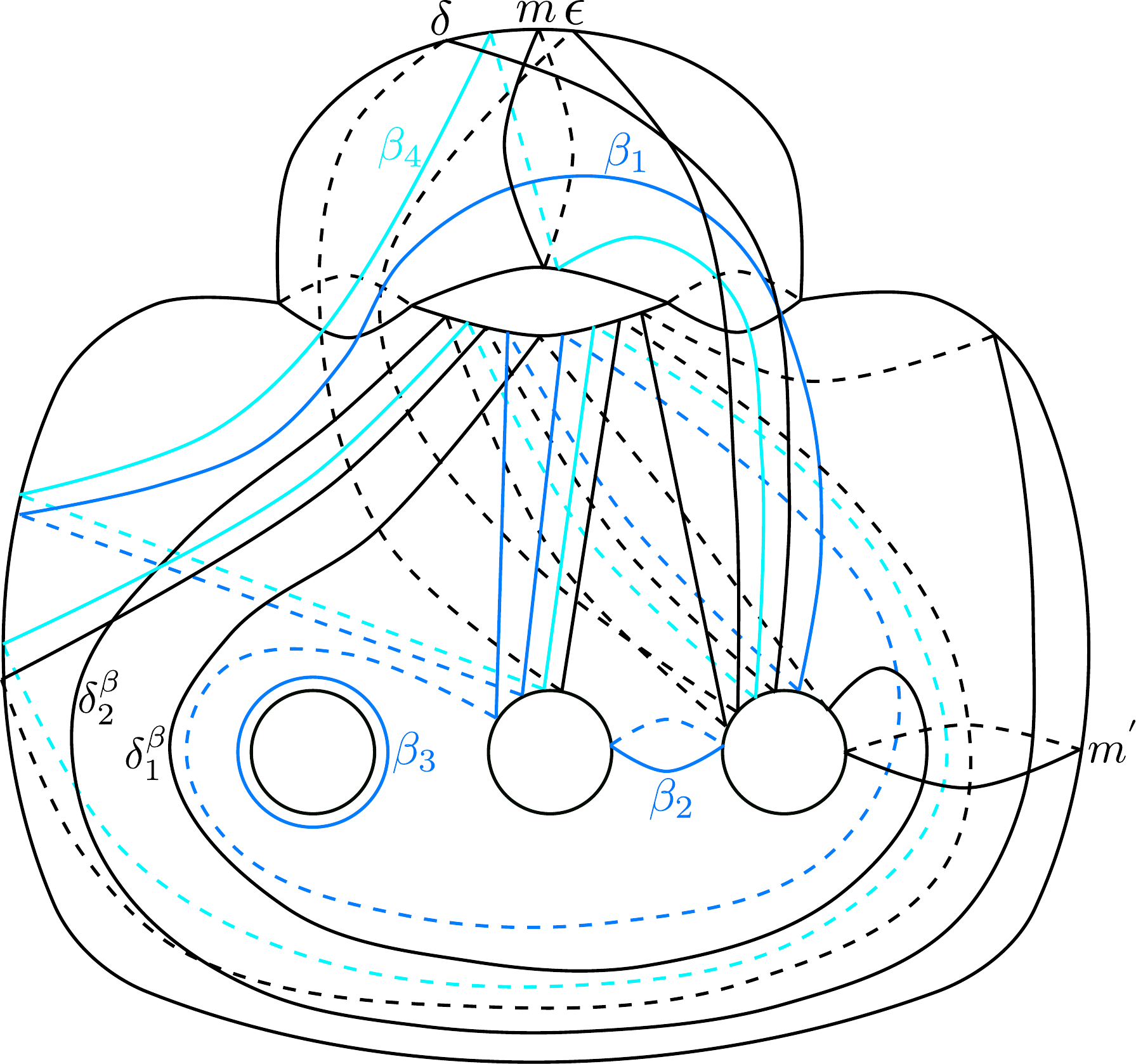}
\end{center}
\setlength{\captionmargin}{50pt}
\caption{The $\beta$ curves obtained by $t_\epsilon^{p-2}t_\delta^{-1}$ for the $\beta$ curves in Figure \ref{fig:D_p}.}
\label{fig:sec4.1_b}
\end{figure}%β_4はβ_1上でスライドしたものをデフォルトにした方がよさそう．

In Lemma \ref{lem:beta_d}, since we perform $t_\epsilon^{p-2}t_\delta^{-1}$ for the $\beta$ curves in Figure \ref{fig:D_p}, we must perform the Dehn twists for the $\alpha$ and $\gamma$ curves in Figure \ref{fig:D_p}. Before deforming the $\alpha$ curves, we define the following operation including the Dehn twists, which we call $\ast$:
First, perform $t_{\delta}^{-1}$. Then, slide $\alpha_1$ over $\alpha_3$ only once to resolve the intersection of $\alpha_1$ and $\epsilon$ (see the left figure in Figure \ref{fig:star}). After that, perform $t_{\epsilon}^{p-2}$. Finally, slide $\alpha_1$ and $\alpha_3$ over $\alpha_4$ $p-1$ times each to resolve the intersections of $m$ and both $\alpha_1$ and $\alpha_3$ (see the right figure in Figure \ref{fig:star}).

\begin{figure}[h]
\begin{center}
\includegraphics[width=8cm, height=7cm, keepaspectratio, scale=1]{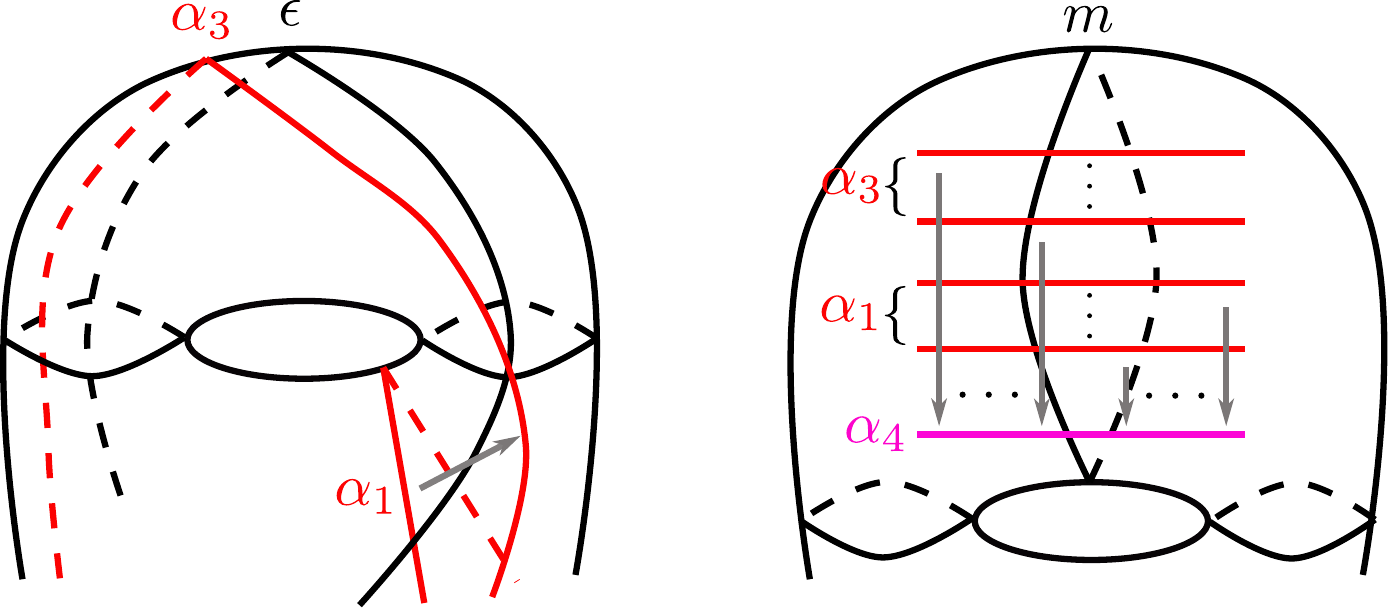}
\end{center}
\setlength{\captionmargin}{50pt}
\caption{These figures describe how to perform the handle slides in the operation $\ast$.}
\label{fig:star}
\end{figure}

\begin{lem}
The $\alpha$ curves $(t_{\delta_1^{\beta}}^{p-2}t_{\hat{\delta}}^{-(p-2)}(\alpha_1), \alpha_2, t_{\delta_1^{\beta}}^{p-2}t_{\tilde{\delta}}^{p-2}(\alpha_3), t_{\delta_1^{\beta}}^{p-2}(\alpha_4))$ can be obtained by performing the operation $\ast$ for the $\alpha$ curves in Figure \ref{fig:D_p}, where $\alpha_1$, $\alpha_3$, $\hat{\delta}$ and $\tilde{\delta}$ are the curves depicted in Figure \ref{fig:sec4.1_r}.
\end{lem}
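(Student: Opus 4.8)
The plan is to carry out the operation $\ast$ one elementary move at a time, tracking each $\alpha$-curve by means of three standard facts: the conjugation identity $\phi\, t_c\, \phi^{-1}=t_{\phi(c)}$ for a mapping class $\phi$ and a Dehn twist $t_c$; that $i(a,c)=0$ implies $t_a t_c=t_c t_a$ and $t_a(c)$ is isotopic to $c$; and that a handle slide of $c_1$ over $c_2$ replaces $c_1$ by a band sum of $c_1$ with a parallel copy of $c_2$, with the feature that if the band lies outside the support of a twist $t_e$ then $t_e$ distributes over the band sum. By Lemma~\ref{lem:main} and the handle slides producing Figure~\ref{fig:D_p}, in $\mathcal{D}_p$ the curves $\alpha_1$ and $\alpha_4$ are of the form $t_{\delta_1^{\beta}}^{p-2}t_{\delta_2^{\beta}}^{-(p-2)}$ applied to fixed curves, while $\alpha_2$ and $\alpha_3$ are untwisted; moreover $\alpha_2$ is disjoint from $\delta$, $\epsilon$ and $\delta_1^{\beta}$ and is not among the curves slid during $\ast$, so it is left unchanged. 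Hence it suffices to analyse $\alpha_1$, $\alpha_3$, $\alpha_4$.

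For $\alpha_4$ the computation formally parallels the one for $\beta_4^p$ in the proof of Lemma~\ref{lem:beta_d}. Using $i(\delta,\alpha_4)=0$ and peeling off $t_{\delta_1^{\beta}}^{p-2}$ (which commutes past $t_{\delta}^{-1}$ and $t_{\epsilon}^{p-2}$), one inserts $t_{\delta}t_{\delta}^{-1}$ to rewrite $t_{\epsilon}^{p-2}t_{\delta}^{-1}$ as $t_{\delta}^{-1}t_{t_{\delta}(\epsilon)}^{p-2}$ and uses $t_{\delta}(\epsilon)=\delta_2^{\beta}$ to cancel the $t_{\delta_2^{\beta}}^{-(p-2)}$ carried by $\alpha_4$, so that $t_{\epsilon}^{p-2}t_{\delta}^{-1}$ applied to the $\alpha_4$-curve of $\mathcal{D}_p$ equals $t_{\delta_1^{\beta}}^{p-2}(\alpha_4)$. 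Since every handle slide in $\ast$ is performed \emph{over} $\alpha_4$, none of them alters $\alpha_4$ itself, and the fourth curve is $t_{\delta_1^{\beta}}^{p-2}(\alpha_4)$ as claimed.

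The curves $\alpha_1$ and $\alpha_3$ are the crux, and it is here that the handle slides in $\ast$ are genuinely needed: this is precisely the ``handle-slide first, then Dehn twist'' mechanism of Figure~\ref{fig:key_idea}. Applying $t_{\delta}^{-1}$ creates a single intersection of $\alpha_1$ with $\epsilon$; the prescribed single slide $\alpha_1\to\alpha_3$ removes it, and comparison with Figure~\ref{fig:sec4.1_r} identifies the resulting band sum as a curve whose relevant twisting curve has become $\hat\delta$ in place of $\delta_2^{\beta}$. Applying $t_{\epsilon}^{p-2}$ to the now cleaned-up $\alpha_1$ and $\alpha_3$ and conjugating as above produces, on each of them, $p-1$ intersections with $m$ (the image of $\delta_2^{\beta}$ appearing in Lemma~\ref{lem:beta_d}); here $\alpha_1$ retains a negative power, coming from its original $t_{\delta_2^{\beta}}^{-(p-2)}$ now converted to $t_{\hat\delta}^{-(p-2)}$, whereas $\alpha_3$, which was untwisted, acquires a \emph{positive} power $t_{\tilde\delta}^{\,p-2}$ coming from $t_{\epsilon}^{p-2}$. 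Finally the $p-1$ slides of each of $\alpha_1$, $\alpha_3$ over $\alpha_4$ remove these intersections, and since the $\alpha_4$-curve carries $t_{\delta_1^{\beta}}^{p-2}$ (as computed above), these band sums contribute the common prefactor $t_{\delta_1^{\beta}}^{p-2}$; a last comparison with Figure~\ref{fig:sec4.1_r} shows the outcome is $t_{\delta_1^{\beta}}^{p-2}t_{\hat\delta}^{-(p-2)}(\alpha_1)$ and $t_{\delta_1^{\beta}}^{p-2}t_{\tilde\delta}^{p-2}(\alpha_3)$.

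The main obstacle is the combinatorial bookkeeping of this last step: one must verify that each of the $1+2(p-1)$ handle slides in $\ast$ cancels exactly the intended intersection without introducing new ones, that the successive band sums reproduce precisely the curves $\alpha_1$, $\alpha_3$, $\hat\delta$, $\tilde\delta$ drawn in Figure~\ref{fig:sec4.1_r}, and that the exponents $p-1$ and $p-2$ line up. I expect this to be a careful but conceptually routine figure-chase, best organised by first pinning down all the bands in the case $p=3$ (or $p=4$) and then running the identical argument for general $p$, with the Dehn-twist identities of Lemma~\ref{lem:beta_d} keeping the powers under control.
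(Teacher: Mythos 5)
Your overall decomposition --- track each $\alpha$-curve through $\ast$, with $\alpha_2$ fixed and $\alpha_4$ handled separately from $\alpha_1,\alpha_3$, using the conjugation identity $\phi t_c \phi^{-1}=t_{\phi(c)}$ and commutativity of disjoint twists --- does match the paper's proof. But the proposal stops where the paper's argument actually begins: the heart of the paper's proof is a careful \emph{reordering} of the interleaved sequence of Dehn twists and handle slides for $\alpha_1$ and $\alpha_3$, not a post-hoc comparison with Figure~\ref{fig:sec4.1_r}. Concretely, the paper lists the full sequence of operations affecting $\alpha_1$ (namely $t_{\delta_2^{\beta}}^{-(p-2)}$, then $t_\delta^{-1}$, then the slide over $t_\delta^{-1}(\alpha_3)$, then the $p-1$ slides over $\alpha_4$, then $t_{\delta_1^\beta}^{p-2}$), commutes $t_\delta^{-1}$ to the front where it acts trivially on $\alpha_1$, commutes the slide over $t_\delta^{-1}(\alpha_3)$ past $t_{t_\delta^{-1}(\delta_2^\beta)}^{-(p-2)}$ using disjointness, and then absorbs the slides over $\alpha_4$ into the twist to obtain a single $t_{\hat\delta}^{-(p-2)}$; a parallel rearrangement produces the $t_{\tilde\delta}^{p-2}$ factor on $\alpha_3$. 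This reordering is the realization of the Figure~\ref{fig:key_idea} mechanism you invoke, but it is the content of the proof, not a ``conceptually routine figure-chase'' that can be deferred to the reader.

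Your description of $\alpha_1$ also contains an internal inconsistency: you say the single slide $\alpha_1\to\alpha_3$ removes the intersection with $\epsilon$, and then in the next sentence that applying $t_\epsilon^{p-2}$ to the ``cleaned-up'' $\alpha_1$ produces $p-1$ intersections with $m$. Once $\alpha_1$ is disjoint from $\epsilon$, $t_\epsilon^{p-2}$ acts trivially on it; the intersections of $\alpha_1$ with $m$ that the final slides over $\alpha_4$ resolve instead come from the residual $t_{t_\delta^{-1}(\delta_2^\beta)}^{-(p-2)}$ twist carried by $\alpha_1$, which is also why $\alpha_1$ ends with a negative exponent while $\alpha_3$ --- which really is hit by $t_\epsilon^{p-2}$ --- ends with a positive one. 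Finally, for $\alpha_4$ your conjugation cancellation is valid but indirect; the paper simply observes that $\delta$ and $\epsilon$ are disjoint from $\alpha_4$, so $t_\epsilon^{p-2}t_\delta^{-1}$ acts trivially and $\alpha_4$ picks up only $t_{\delta_1^\beta}^{p-2}$.
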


\begin{proof}
It is obvious that $t_{\delta_1^{\beta}}^{p-2}(\alpha_4)$ is obtained by $\ast$ since both $\delta$ and $\epsilon$ do not intersect $\alpha_4$. We show firstly the statement for $\alpha_1$. The operations which deform $\alpha_1$ in Figure \ref{fig:D_p} are as follows:
\[
t_{\delta_2^{\beta}}^{-(p-2)} \to t_{\delta}^{-1} \to (\to t_{\delta}^{-1}(\alpha_3) \  \text{in} \ \text{Figure} \ \ref{fig:D_p}) \to (\to \alpha_4 \ p-1 \ \text{times}) \to t_{\delta_1^{\beta}}^{p-2}.
\]
Note that this means $\alpha_1$ in Figure \ref{fig:D_p} is deformed in order by $t_{\delta_2^{\beta}}^{-(p-2)}$, $t_{\delta}^{-1}$, handle sliding over  $t_{\delta}^{-1}(\alpha_3)$ in Figure \ref{fig:D_p}, handle sliding over $\alpha_4$ and $t_{\delta_1^{\beta}}^{p-2}$. Since $t_{\delta}^{-1}t_{\delta_2^{\beta}}^{-(p-2)}=t_{t_{\delta}^{-1}(\delta_2^{\beta})}^{-(p-2)}t_{\delta}^{-1}$, we can take the following operations instead of the above operations:
\[
t_{\delta}^{-1} \to t_{t_{\delta}^{-1}(\delta_2^{\beta})}^{-(p-2)} \to (\to t_{\delta}^{-1}(\alpha_3) \  \text{in} \ \text{Figure} \ \ref{fig:D_p}) \to (\to \alpha_4 \ p-1 \ \text{times}) \to t_{\delta_1^{\beta}}^{p-2}.
\]
Then, the first operation $t_{\delta}^{-1}$ does not deform $\alpha_1$ in Figure \ref{fig:D_p}, thus we can take the following operations:
\[
t_{t_{\delta}^{-1}(\delta_2^{\beta})}^{-(p-2)} \to (\to t_{\delta}^{-1}(\alpha_3) \ \text{in} \ \text{Figure} \ \ref{fig:D_p}) \to (\to \alpha_4 \ p-1 \ \text{times}) \to t_{\delta_1^{\beta}}^{p-2}.
\]
In this operations, since $t_{\delta}^{-1}(\delta_2^{\beta})$ does not intersect $t_{\delta}^{-1}(\alpha_3)$ in Figure \ref{fig:D_p}, we can change the order in the operations as follows:
\[
(\to t_{\delta}^{-1}(\alpha_3) \ \text{in} \ \text{Figure} \ \ref{fig:D_p}) \to t_{t_{\delta}^{-1}(\delta_2^{\beta})}^{-(p-2)}  \to (\to \alpha_4 \ p-1 \ \text{times}) \to t_{\delta_1^{\beta}}^{p-2}.
\]
Handle sliding $\alpha_1$ over $t_{\delta}^{-1}(\alpha_3)$ and $\alpha_4$ in Figure \ref{fig:D_p} provides us with $\alpha_1$ in Figure \ref{fig:sec4.1_r1} up to handle slide over $\alpha_2$. Thus, the remaining operations for $\alpha_1$ in Figure \ref{fig:sec4.1_r1} are as follows:
\[
t_{t_{\delta}^{-1}(\delta_2^{\beta})}^{-(p-2)}  \to (\to \alpha_4 \ p-2 \ \text{times}) \to t_{\delta_1^{\beta}}^{p-2}.
\]
Since $t_{\delta}^{-1}(\delta_2^{\beta})$ does not intersect $\alpha_4$, the operations $t_{t_{\delta}^{-1}(\delta_2^{\beta})}^{-(p-2)}  \to (\to \alpha_4  \ p-2 \ times)$ can be regarded as $t_c^{-(p-2)}$, where $c$ is the simple closed curve obtained by $t_{\delta}^{-1}(\delta_2^{\beta}) \to \alpha_4$. Actually, $c$ is $\hat{\delta}$. This completes the proof for $\alpha_1$.

Next, we show the statement for $\alpha_3$. In the second operation and $\ast$ in the proof for $\alpha_1$, the operations which deform $\alpha_3$ in Figure \ref{fig:D_p} are as follows:
\[
t_{\delta}^{-1} \to t_{\epsilon}^{p-2} \to (\to \alpha_4 \  p-1 \ \text{times}) \to t_{\delta_1^{\beta}}^{p-2}.
\]
We can regard this operations as the following operations, namely
\[
t_{c_1}^{-1} \to t_{c_2}^{p-2} \to t_{\delta_1^{\beta}}^{p-2},
\]
where $c_1$ is the simple closed curve obtained by handle sliding $\delta$ over $\alpha_4$ and $c_2$ is the simple closed curve obtained by handle sliding $\epsilon$ over $\alpha_4$. It is obvious that $t_{c_1}^{-1}(\alpha_3)$ in Figure \ref{fig:D_p} is $\alpha_3$ in Figure \ref{fig:sec4.1_r3} and $c_2$ is $\tilde{\delta}$. This completes the proof for $\alpha_3$.
\end{proof}

\begin{figure}[h]
\begin{tabular}{cc}
\begin{minipage}{0.5\hsize}
\begin{center}
\includegraphics[width=8cm, height=5.5cm, keepaspectratio, scale=1]{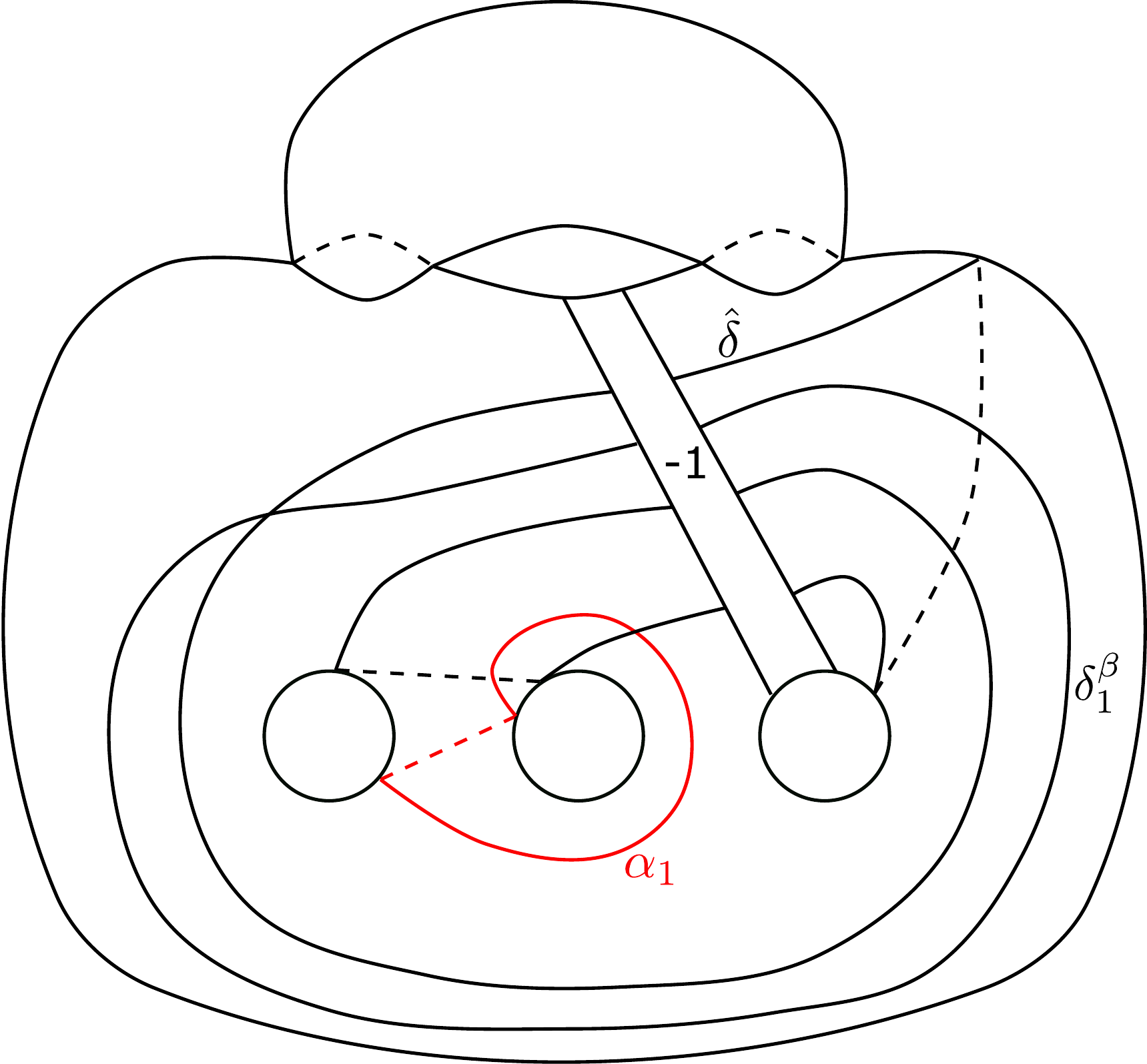}
\end{center}
\setlength{\captionmargin}{50pt}
\subcaption{}
\label{fig:sec4.1_r1}
\end{minipage}
\begin{minipage}{0.5\hsize}%小さくすると寄る
\begin{center}
\includegraphics[width=8cm, height=5.5cm, keepaspectratio, scale=1]{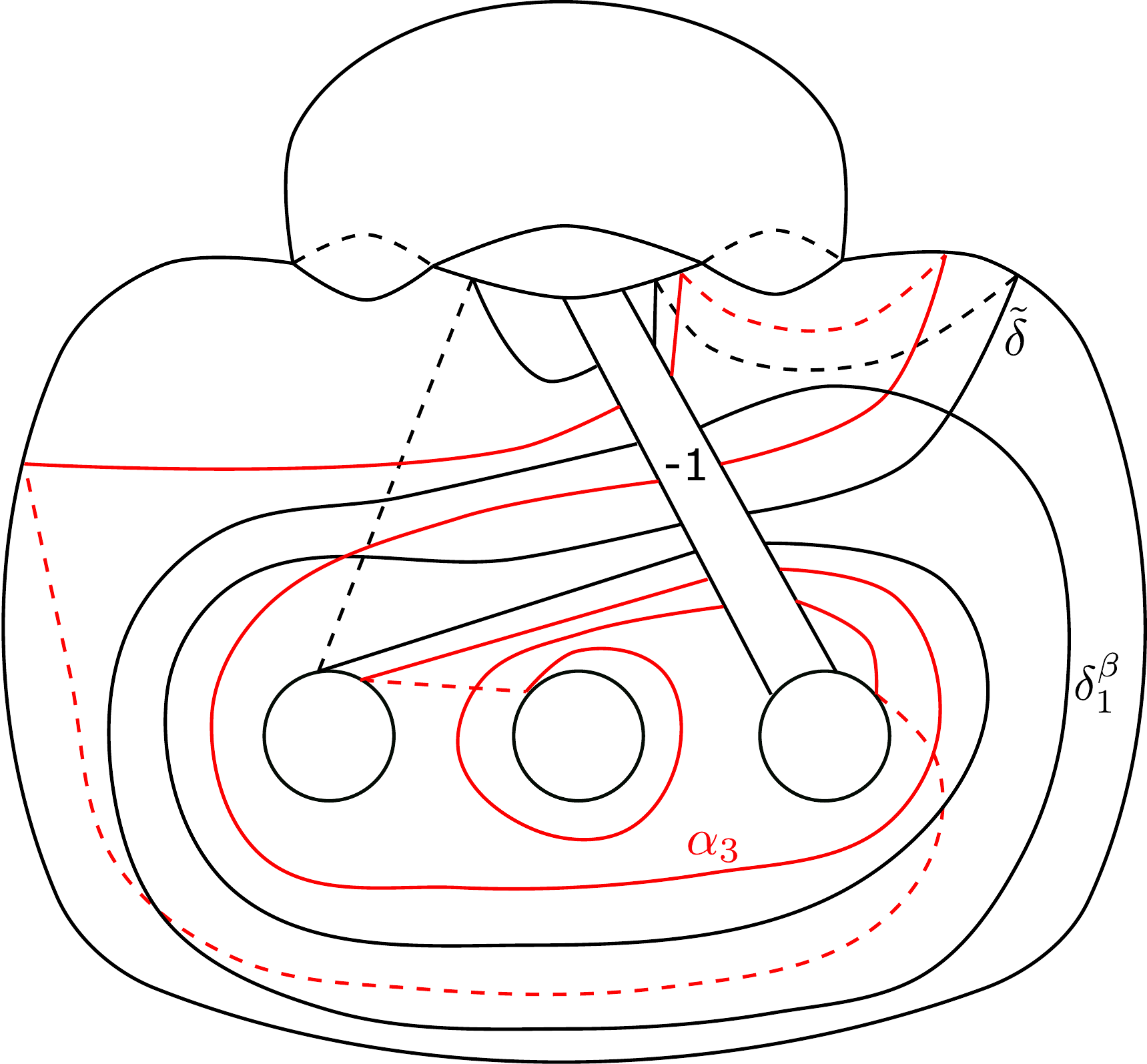}
\end{center}
\setlength{\captionmargin}{50pt}
\subcaption{}
\label{fig:sec4.1_r3}
\end{minipage}  
\end{tabular}
\setlength{\captionmargin}{50pt}
\caption{A part of the $\alpha$ curves obtained by the operation $\ast$ for the $\alpha$ curves in Figure \ref{fig:D_p}. See Figure \ref{fig:new_notation} on the $(-1)$-box notation.}
\label{fig:sec4.1_r}
\end{figure}

\begin{figure}[h]
\begin{center}
\includegraphics[width=6cm, height=7cm, keepaspectratio, scale=1]{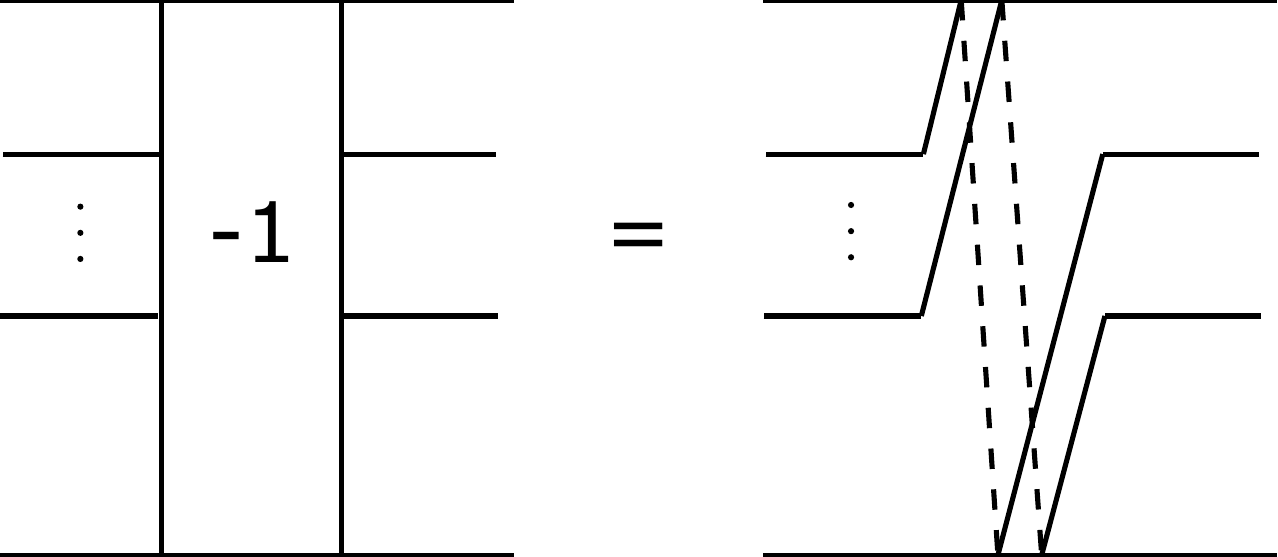}
\end{center}
\setlength{\captionmargin}{50pt}
\caption{A new simple notation for trisection diagrams based on the twist-box notations in Kirby diagrams. This notation simplifies the left-handed Dehn twist.}
\label{fig:new_notation}
\end{figure}

Next, we perform $t_\epsilon^{p-2}t_\delta^{-1}$ for the $\gamma$ curves in Figure \ref{fig:D_p}. Before deforming the $\gamma$ curves, we define the following operation including the Dehn twists, which we call $\ast\ast$: First, perform $t_{\delta}^{-1}$ and $t_{\epsilon}^{p-2}$. Then, slide $t_{\epsilon}^{p-2}t_{\delta}^{-1}(\gamma_3)$ in Figure \ref{fig:D_p} over $t_{{\delta_1^\gamma}}^{p-2}(\gamma_4)$ only once to resolve the intersection with $m$. The way of performing the handle slide is the same as the last slide in the operation $\ast$ (see the right figure in Figure \ref{fig:star}).

\begin{lem}
The $\gamma$ curves $(t_{{\delta_1^\gamma}}^{p-2}(\gamma_1), \gamma_2, t_{{\delta_1^\gamma}}^{p-2}(\gamma_3), t_{{\delta_1^\gamma}}^{p-2}(\gamma_4))$ can be obtained by performing the operation $\ast\ast$ for the $\gamma$ curves in Figure \ref{fig:D_p}, where $\gamma_1$, $\gamma_3$, and $\delta_1^\gamma$ are the curves depicted in Figure \ref{fig:sec4.1_g}.
\end{lem}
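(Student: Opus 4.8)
The plan is to run the same argument as in the proof of the previous lemma for the $\alpha$ curves, transporting every step from the $\beta$/$\alpha$ side to the $\gamma$ side: the roles of $\delta_1^{\beta},\delta_2^{\beta}$ are now played by $\delta_1^{\gamma},\delta_2^{\gamma}$, and the two structural facts we rely on are the intersection relation $t_\delta(\epsilon)=\delta_2^{\gamma}$ (so that $t_\delta t_\epsilon^{p-2}t_\delta^{-1}=t_{\delta_2^{\gamma}}^{p-2}$) together with $i(\delta_1^{\gamma},c)=0$ for every curve $c$ occurring, so that $t_{\delta_1^{\gamma}}$ commutes with everything in sight. First I would dispose of the easy curves. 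The curve $\gamma_2$ is disjoint from both $\delta$ and $\epsilon$ and is not touched by the handle slide in $\ast\ast$, hence is unchanged. For $\gamma_4$, the handle slide in $\ast\ast$ slides $\gamma_3$ over $\gamma_4$ and therefore does not move $\gamma_4$; writing the $\gamma_4$-curve of $\mathcal D_p$ as $t_{\delta_1^{\gamma}}^{p-2}t_{\delta_2^{\gamma}}^{-(p-2)}(\gamma_4)$ (up to the earlier slides $\gamma_4\to\gamma_1$, which do not affect $\gamma_4$ itself), the computation $t_{\delta_1^{\gamma}}^{-(p-2)}t_\epsilon^{p-2}t_\delta^{-1}t_{\delta_1^{\gamma}}^{p-2}t_{\delta_2^{\gamma}}^{-(p-2)}(\gamma_4)=t_\delta^{-1}t_{\delta_2^{\gamma}}^{p-2}t_{\delta_2^{\gamma}}^{-(p-2)}(\gamma_4)=t_\delta^{-1}(\gamma_4)$ collapses $\ast\ast$ on $\gamma_4$ to $t_{\delta_1^{\gamma}}^{p-2}(\gamma_4)$, exactly as in Lemma \ref{lem:beta_d}.

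For $\gamma_1$ and $\gamma_3$ I would track, for each curve separately, the sequence of moves that $\ast\ast$ performs on it in Figure \ref{fig:D_p}: the $t_{\delta_2^{\gamma}}^{-(p-2)}$ already present in $\mathcal D_p$ (absent for $\gamma_3$, which carries no twist in $\mathcal D_p$), then $t_\delta^{-1}$, then $t_\epsilon^{p-2}$, then the single handle slide over $t_{\delta_1^{\gamma}}^{p-2}(\gamma_4)$, and finally the leftover $t_{\delta_1^{\gamma}}^{p-2}$. Using the commutation $t_\delta^{-1}t_c=t_{t_\delta^{-1}(c)}t_\delta^{-1}$ to push $t_\delta^{-1}$ to the right, dropping the first occurrence of $t_\delta^{-1}$ (resp. $t_\epsilon^{p-2}$) when it acts trivially on the curve in question because $\delta$ (resp. $\epsilon$) is disjoint from that curve in $\mathcal D_p$, reordering the remaining Dehn twists and handle slides whenever they have disjoint support, and collapsing a "twist along $c$ followed by a slide of $c$ over $\gamma_4$" into a single twist along the slid curve, each of $\gamma_1$ and $\gamma_3$ is reduced to $t_{\delta_1^{\gamma}}^{p-2}$ applied to one concrete curve, which is then read off as the $\gamma_1$, $\gamma_3$ drawn in Figure \ref{fig:sec4.1_g}. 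The answer comes out in the cleaner form $t_{\delta_1^{\gamma}}^{p-2}(\gamma_i)$ with no analogue of the auxiliary curves $\hat\delta,\tilde\delta$ of the $\alpha$ case precisely because $\ast\ast$ contains only the one handle slide, and because on the $\gamma$ side of $\mathcal D_p$ only the slides $\gamma_4\to\gamma_1$ were performed; any conjugating twist that the bookkeeping might threaten to produce is absorbed by $t_\delta(\epsilon)=\delta_2^{\gamma}$ or has no intersection to act on.

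I expect the main obstacle to be the single handle slide built into $\ast\ast$: one has to verify that sliding $t_\epsilon^{p-2}t_\delta^{-1}(\gamma_3)$ once over $t_{\delta_1^{\gamma}}^{p-2}(\gamma_4)$ really cancels all of the intersections with $m$ created by $t_\epsilon^{p-2}$ (this works in one slide only because the $\gamma_4$-curve of $\mathcal D_p$ is already wound $p-2$ times along $\delta_2^{\gamma}$), leaving exactly the curve $\gamma_3$ of Figure \ref{fig:sec4.1_g}, and likewise that after this slide $\gamma_1$ is already isotopic to the curve drawn there. This is a direct figure-chasing check on the relevant pieces of Figure \ref{fig:D_p} and Figure \ref{fig:sec4.1_g}, entirely analogous to the last slide of $\ast$ pictured in Figure \ref{fig:star}; it is tedious but presents no conceptual difficulty, and the rest of the argument is the formal mapping-class-group manipulation described above.
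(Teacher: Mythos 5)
Your proposal follows essentially the same route as the paper's proof: conjugate $t_\epsilon^{p-2}t_\delta^{-1}$ past $t_{\delta_1^\gamma}^{p-2}$ (disjointness) and use $t_\delta(\epsilon)=\delta_2^\gamma$ to cancel the $t_{\delta_2^\gamma}^{\mp(p-2)}$ factors, obtaining $t_\epsilon^{p-2}t_\delta^{-1}t_{\delta_1^\gamma}^{p-2}t_{\delta_2^\gamma}^{-(p-2)} = t_{\delta_1^\gamma}^{p-2}t_\delta^{-1}$, which immediately handles $\gamma_1,\gamma_2,\gamma_4$; then commute the lone handle slide in $\ast\ast$ with $t_{\delta_1^\gamma}^{p-2}$ to handle $\gamma_3$ by figure-chasing, exactly as the paper does. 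One small caution: your parenthetical that the slides $\gamma_4\to\gamma_1$ ``do not affect $\gamma_4$ itself'' reverses the convention $c_1\to c_2$ (slide $c_1$ over $c_2$) used in the paper, so those slides \emph{do} change $\gamma_4$; this does not break your argument, since you only need the identity above applied to whatever curves appear in Figure~\ref{fig:D_p}, but the misstatement is worth correcting.
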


\begin{proof}
Since 
\begin{eqnarray*}
t_{\epsilon}^{p-2} t_{\delta}^{-1} t_{{\delta_1^\gamma}}^{p-2} t_{{\delta_2^\gamma}}^{-(p-2)}
&=& t_{{\delta_1^\gamma}}^{p-2} t_{\epsilon}^{p-2} t_{\delta}^{-1}  t_{{\delta_2^\gamma}}^{-(p-2)}\\
&=& t_{{\delta_1^\gamma}}^{p-2} t_{\delta}^{-1} t_{t_{\delta}(\epsilon)}^{p-2} t_{{\delta_2^\gamma}}^{-(p-2)}\\
&=& t_{{\delta_1^\gamma}}^{p-2} t_{\delta}^{-1} t_{{\delta_2^\gamma}}^{p-2} t_{{\delta_2^\gamma}}^{-(p-2)}\\
&=& t_{{\delta_1^\gamma}}^{p-2} t_{\delta}^{-1},
\end{eqnarray*}
the statements for $\gamma_1$, $\gamma_2$, $\gamma_4$ obviously holds. For $\gamma_3$, we may consider the following opertions:
\[
t_{\delta}^{-1} \to t_{{\delta_1^\gamma}}^{p-2} \to (\to t_{{\delta_1^\gamma}}^{p-2}(\gamma_4) \ \text{in} \  \text{Figure} \ \ref{fig:D_p}).
\]
We can take the following operation instead of the above operations:
\[
t_{\delta}^{-1} \to (\to\gamma_4 \ \text{in} \  \text{Figure} \ \ref{fig:D_p}) \to t_{{\delta_1^\gamma}}^{p-2}.
\]
We see that the simple closed curve obtained by handle sliding $t_{\delta}^{-1}(\gamma_3)$ in Figure \ref{fig:D_p} over $\gamma_4$ is $\gamma_3$ in Figure \ref{fig:sec4.1_g}. This completes the proof.
\end{proof}

\begin{figure}[h]
\begin{center}
\includegraphics[width=7cm, height=7cm, keepaspectratio, scale=1]{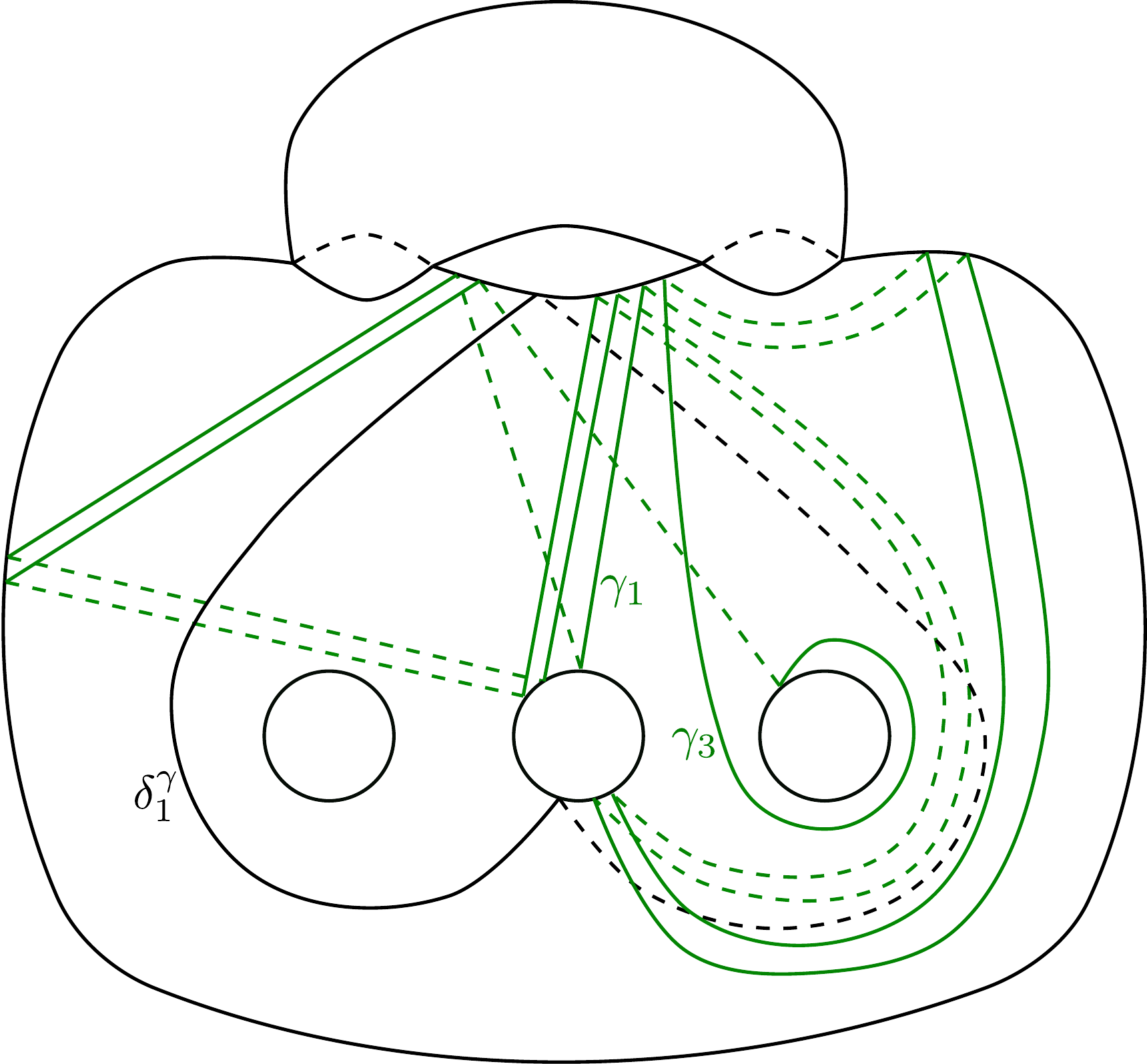}
\end{center}
\setlength{\captionmargin}{50pt}
\caption{A part of the $\gamma$ curves obtained by the operation $\ast\ast$ for the $\gamma$ curves in Figure \ref{fig:D_p}.}
\label{fig:sec4.1_g}
\end{figure}

%ここから，destabilizationできる理由を説明して，destabiliする．
\subsection*{step2: Destabilize the trisection diagram in the step1}
So, we have now the trisection diagram $(\alpha, \beta, \gamma)$, where $\alpha = (t_{\delta_1^{\beta}}^{p-2}t_{\hat{\delta}}^{-(p-2)}(\alpha_1), \alpha_2, t_{\delta_1^{\beta}}^{p-2}t_{\tilde{\delta}}^{p-2}(\alpha_3), t_{\delta_1^{\beta}}^{p-2}(\alpha_4))$, $\beta = (t_{\delta_1^{\beta}}^{p-2}(\beta_1), \beta_2, \beta_3, \beta_4=m)$ and $\gamma = (t_{{\delta_1^\gamma}}^{p-2}(\gamma_1), \gamma_2, t_{{\delta_1^\gamma}}^{p-2}(\gamma_3), t_{{\delta_1^\gamma}}^{p-2}(\gamma_4))$ (see Figure \ref{fig:sec4.1_r}, \ref{fig:sec4.1_b} and \ref{fig:sec4.1_g}, respectively). Then, $\beta_4$ intersects both $\alpha_4$ and $\gamma_4$ only once. By handle sliding $\alpha_4$ and $\gamma_4$ over $\alpha_2$ and $\gamma_2$ respectively, $\alpha_4$ and $\gamma_4$ can be parallel. Moreover, $\delta_1^{\beta}$ and $\delta_1^{\gamma}$ can be parallel by the same handle slide. Thus, $t_{\delta_1^{\beta}}^{p-2}(\alpha_4)$ and $t_{{\delta_1^\gamma}}^{p-2}(\gamma_4)$ can be parallel (slide other curves if necessary), and we can destabilize the trisection diagram for $t_{\delta_1^{\beta}}^{p-2}(\alpha_4)$, $\beta_4$ and $t_{{\delta_1^\gamma}}^{p-2}(\gamma_4)$. Note that this destabilization is based on Naylor's method (Lemma 8 in \cite{MR4480889}). By this destabilization, we now have the trisection diagram $(\alpha, \beta, \gamma)$ depicted in Figure \ref{fig:sec4.2}, where $\alpha = (t_{\delta_1^{\beta}}^{p-2}t_{\hat{\delta}}^{-(p-2)}(\alpha_1), \alpha_2, t_{\delta_1^{\beta}}^{p-2}t_{\tilde{\delta}}^{p-2}(\alpha_3))$, $\beta = (t_{\delta_1^{\beta}}^{p-2}(\beta_1), \beta_2, \beta_3)$ and $\gamma = (t_{{\delta_1^\gamma}}^{p-2}(\gamma_1), \gamma_2, t_{{\delta_1^\gamma}}^{p-2}(\gamma_3))$.

\begin{figure}[h]
%\begin{tabular}{cc}
\begin{minipage}{0.5\hsize}%小さくすると寄る
\begin{center}
\includegraphics[width=8cm, height=4.5cm, keepaspectratio, scale=1]{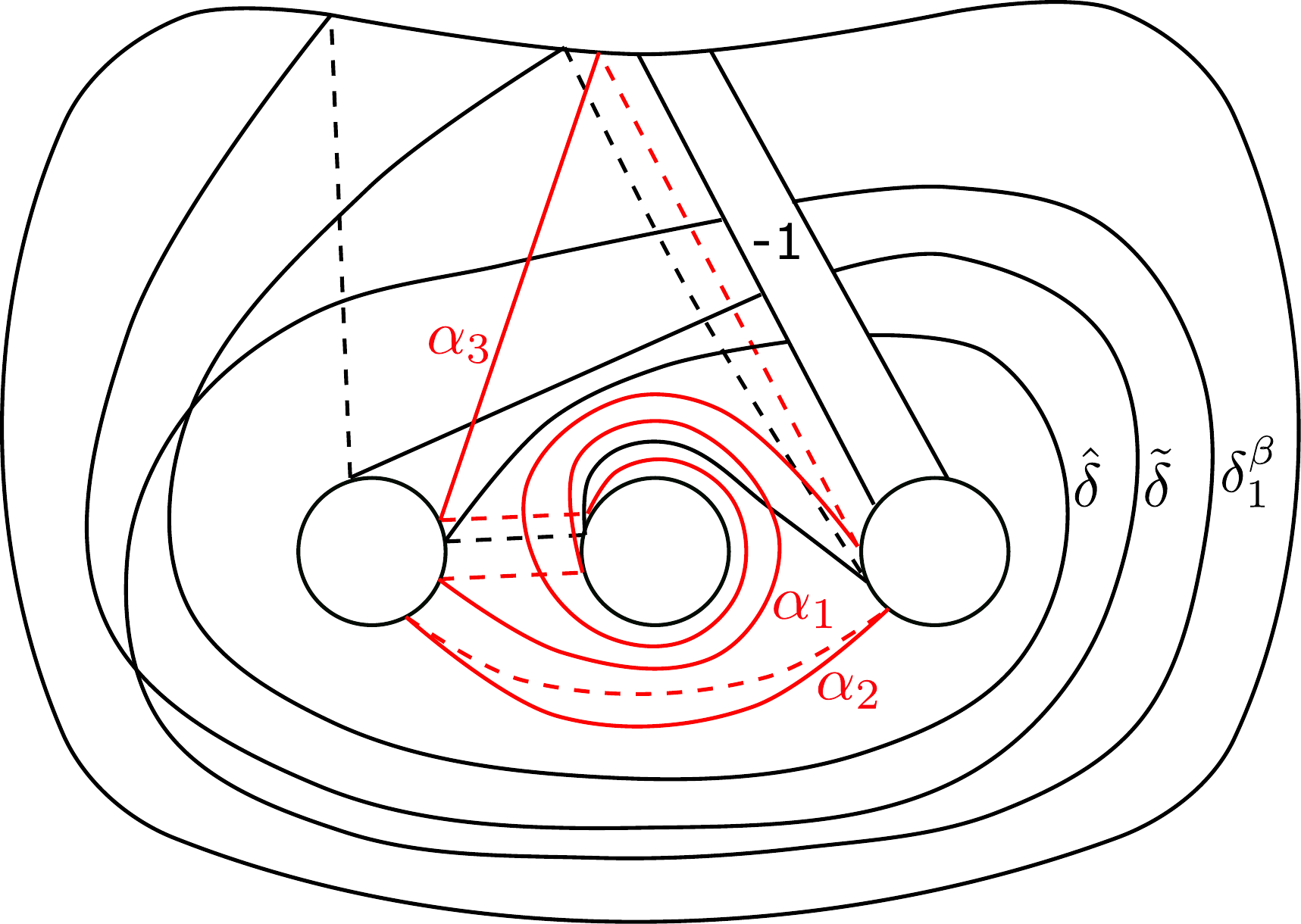}
\end{center}
\subcaption{$\alpha$ curves}
\label{fig:sec4.2_r}
\end{minipage} 
\begin{minipage}{0.49\hsize}
\begin{center}
\includegraphics[width=8cm, height=4.5cm, keepaspectratio, scale=1]{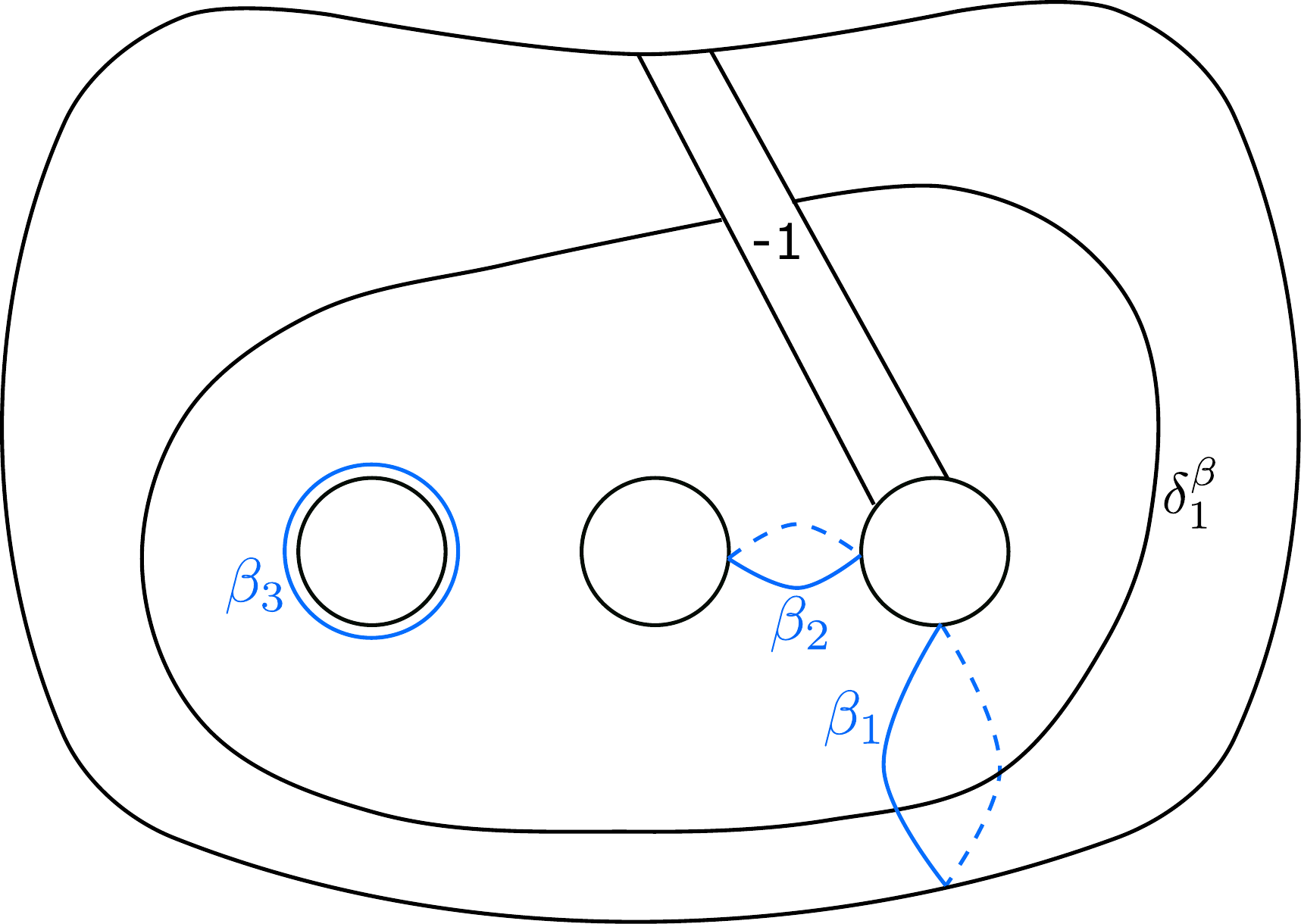}
\end{center}
\subcaption{$\beta$ curves}
\label{fig:sec4.2_b}
\end{minipage} 
\begin{minipage}{0.5\hsize}
\begin{center}
\includegraphics[width=8cm, height=4.5cm, keepaspectratio, scale=1]{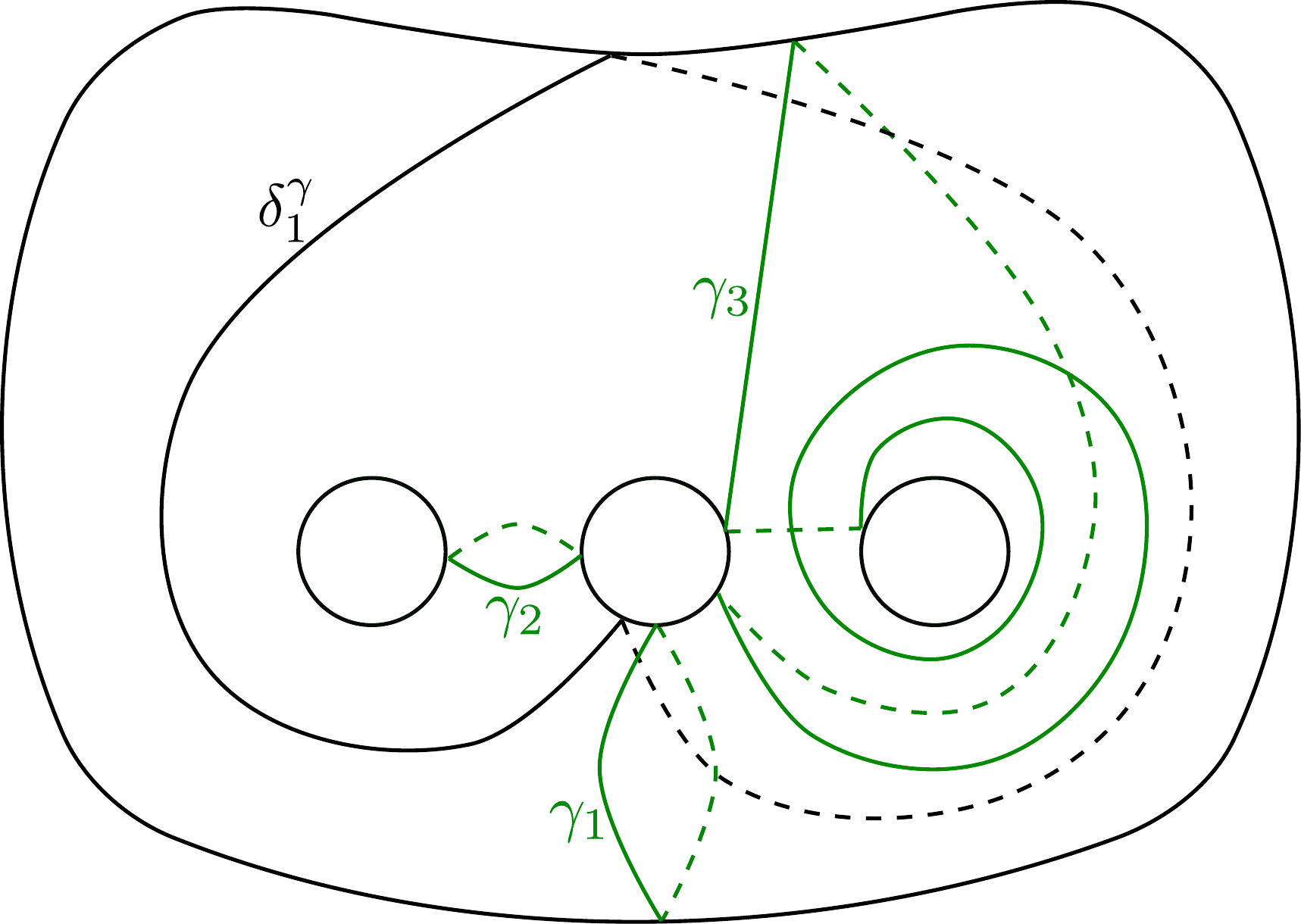}
\end{center}
\subcaption{$\gamma$ curves}
\label{fig:sec4.2_g}
\end{minipage} 
%\end{tabular}
\setlength{\captionmargin}{50pt}
\caption{The trisection diagram obtained by the destabilization in the step 2, where $\alpha=(t_{\delta_1^{\beta}}^{p-2} t_{\hat{\delta}}^{-(p-2)} (\alpha_1), \alpha_2, t_{\delta_1^{\beta}}^{p-2} t_{\tilde{\delta}}^{p-2} (\alpha_3))$, $\beta=(t_{\delta_1^{\beta}}^{p-2}(\beta_1), \beta_2, \beta_3)$ and $\gamma=(t_{\delta_1^{\gamma}}^{p-2} (\gamma_1), \gamma_2, t_{\delta_1^{\gamma}}^{p-2} (\gamma_3))$}
\label{fig:sec4.2}
\end{figure}

\subsection*{step3: Reduce the destabilized trisection diagram to the stabilization of the genus 0 trisection diagram}
We make some preparations to adapt the seesaw lemma (Lemma \ref{lem:seesaw}). In Figure \ref{fig:sec4.2}, slide $\gamma_3$ over $\gamma_1$, perform $t_{\beta_2}$ for the diagram and slide $\alpha_1$ over $\alpha_2$. Then, the trisection diagram $(\alpha, \beta, \gamma)$ depicted in Figure \ref{fig:sec4.3}, where $\alpha = (t_{\delta_1^{\beta}}^{p-2}t_{\hat{\delta}}^{-(p-2)}(\alpha_1), \alpha_2, t_{\delta_1^{\beta}}^{p-2}t_{\tilde{\delta}}^{p-2}(\alpha_3))$, $\beta = (t_{\delta_1^{\beta}}^{p-2}(\beta_1), \beta_2, \beta_3)$ and $\gamma = (t_{{\delta_1^\gamma}}^{p-2}(\gamma_1), \gamma_2, t_{{\delta_1^\gamma}}^{p-2}(\gamma_3))$, can be obtained. Note that we here slide $\delta_1^\beta$ and $\delta_1^\gamma$ over $\beta_2$ and the new $\gamma_3$, respectively.

\begin{lem}\label{lem:only gamma}
The trisection diagram $(\alpha, \beta, \gamma)$ depicted in Figure \ref{fig:sec4.4}, where $\alpha = (\alpha_1, \alpha_2, \alpha_3)$, $\beta = (\beta_1, \beta_2, \beta_3)$ and $\gamma = t_{\tilde{\delta}}^{-(p-2)} t_{\hat{\delta}}^{p-2} t_{\delta_1^{\beta}}^{-(p-2)} t_{{\delta_1^\gamma}}^{p-2}(\gamma_1, \gamma_2, \gamma_3)$, is obtained from the above trisection diagram. Note that for a surface diffeomorphism $g$ and a collection of simple closed curves $\gamma=(\gamma_1,\gamma_2,\gamma_3)$, $g(\gamma)$ denotes $(g(\gamma_1), g(\gamma_2), g(\gamma_3))$.
\end{lem}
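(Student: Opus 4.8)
The plan is to obtain the diagram of Figure~\ref{fig:sec4.4} from the one of Figure~\ref{fig:sec4.3} by a \emph{single} surface diffeomorphism applied simultaneously to all three families $\alpha,\beta,\gamma$; since a trisection diagram and its image under one surface diffeomorphism are equivalent, no handle slides or stabilizations will then be needed. The diffeomorphism to use is $g:=t_{\tilde\delta}^{-(p-2)}\,t_{\hat\delta}^{p-2}\,t_{\delta_1^{\beta}}^{-(p-2)}$, which (since $\hat\delta,\tilde\delta,\delta_1^{\beta}$ are pairwise disjoint in Figure~\ref{fig:sec4.3}) also equals $t_{\hat\delta}^{p-2}\,t_{\tilde\delta}^{-(p-2)}\,t_{\delta_1^{\beta}}^{-(p-2)}$. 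This $g$ is exactly what undoes the twist factors carried by the $\alpha$-curves in Figure~\ref{fig:sec4.3}: $t_{\hat\delta}^{p-2}t_{\delta_1^{\beta}}^{-(p-2)}=(t_{\delta_1^{\beta}}^{p-2}t_{\hat\delta}^{-(p-2)})^{-1}$ cancels the factor on $\alpha_1$, and $t_{\tilde\delta}^{-(p-2)}t_{\delta_1^{\beta}}^{-(p-2)}=(t_{\delta_1^{\beta}}^{p-2}t_{\tilde\delta}^{p-2})^{-1}$ cancels the factor on $\alpha_3$, and these two prescriptions agree away from their supports. It is then a bonus that $g$ also simplifies the $\beta$-family and pushes all the remaining twisting onto $\gamma$; this is a concrete instance of the ``key idea'' of Figure~\ref{fig:key_idea}, where one carries out the handle slides implicit in the twist factors \emph{before} the Dehn twists rather than after.

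First I would record from Figure~\ref{fig:sec4.3} the relevant intersection data: $\hat\delta,\tilde\delta,\delta_1^{\beta}$ are pairwise disjoint and each is disjoint from $\alpha_2$, $\beta_1$, $\beta_2$, $\beta_3$; moreover $\tilde\delta$ is disjoint from $\alpha_1$, $\hat\delta$ is disjoint from $\alpha_3$, and $i(\delta_1^{\gamma},\gamma_2)=0$. These yield the commutation relations among $t_{\hat\delta},t_{\tilde\delta},t_{\delta_1^{\beta}}$, and the fact that each of these twists fixes the isotopy class of the curves just listed. A direct computation then gives $g\big(t_{\delta_1^{\beta}}^{p-2}t_{\hat\delta}^{-(p-2)}(\alpha_1)\big)=\alpha_1$, $g(\alpha_2)=\alpha_2$, $g\big(t_{\delta_1^{\beta}}^{p-2}t_{\tilde\delta}^{p-2}(\alpha_3)\big)=\alpha_3$, and likewise $g\big(t_{\delta_1^{\beta}}^{p-2}(\beta_1)\big)=\beta_1$, $g(\beta_2)=\beta_2$, $g(\beta_3)=\beta_3$, so the $\alpha$- and $\beta$-families become the untwisted ones of Figure~\ref{fig:sec4.4}. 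For the $\gamma$-family I would first use $i(\delta_1^{\gamma},\gamma_2)=0$ to rewrite the middle curve of $\gamma$ as $t_{\delta_1^{\gamma}}^{p-2}(\gamma_2)$; then applying $g$ to $\big(t_{\delta_1^{\gamma}}^{p-2}(\gamma_1),t_{\delta_1^{\gamma}}^{p-2}(\gamma_2),t_{\delta_1^{\gamma}}^{p-2}(\gamma_3)\big)$ literally produces $t_{\tilde\delta}^{-(p-2)}t_{\hat\delta}^{p-2}t_{\delta_1^{\beta}}^{-(p-2)}t_{\delta_1^{\gamma}}^{p-2}(\gamma_1,\gamma_2,\gamma_3)$, which is the claimed $\gamma$. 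Comparing the resulting diagram with Figure~\ref{fig:sec4.4} finishes the argument.

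I expect the main obstacle to be the bookkeeping in the first step. The curves $\hat\delta$, $\tilde\delta$, $\delta_1^{\beta}$, $\delta_1^{\gamma}$ have been transported through the operations $\ast$ and $\ast\ast$, the destabilization of Step~2, and the handle slides opening Step~3, so one must verify carefully that in Figure~\ref{fig:sec4.3} they are disjoint from precisely the curves listed above; a single wrong intersection would force extra handle slides and spoil the clean formula. In particular one must confirm that $g$ really is a single surface diffeomorphism, with no leftover handle slides needed to match Figure~\ref{fig:sec4.4} on the nose.
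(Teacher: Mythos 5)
Your choice of $g=t_{\tilde\delta}^{-(p-2)}t_{\hat\delta}^{p-2}t_{\delta_1^\beta}^{-(p-2)}$ is exactly the diffeomorphism the paper applies, and the computation for the $\gamma$-family (using $i(\delta_1^\gamma,\gamma_2)=0$ to absorb $\gamma_2$ into the $t_{\delta_1^\gamma}^{p-2}$ factor) matches the paper. The gap is in the claim that \emph{no handle slides are needed}, i.e.\ that $g$ carries the $\alpha$- and $\beta$-families of Figure~\ref{fig:sec4.3} literally onto those of Figure~\ref{fig:sec4.4}. Your argument hinges on the intersection data you propose to ``record'' (in particular that $\tilde\delta$ misses $\alpha_1$, $\hat\delta$ misses $\alpha_3$, and that all of $\hat\delta,\tilde\delta,\delta_1^\beta$ miss $\beta_1,\beta_2,\beta_3$), but these are not verified and, for the $\beta$-family, they cannot all hold: the paper's argument is explicitly that $\hat\delta$ and $\delta_1^\beta$ become parallel to $\tilde\delta$ and $\delta_1^\gamma$ only \emph{after sliding over $\beta_2$}. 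In other words, $t_{\tilde\delta}^{-(p-2)}t_{\hat\delta}^{p-2}t_{\delta_1^\beta}^{-(p-2)}t_{\delta_1^\gamma}^{p-2}$ is not the identity on each $\beta_i$; it only returns the same $\beta$ cut system up to handle slides because the pairs of twisting curves cobound annuli across the $\beta$-handlebody. This is precisely the handle-slide-mediated cancellation (the ``key idea'' of Figure~\ref{fig:key_idea}) that your single-diffeomorphism plan is trying to dispense with. Likewise, the paper performs an explicit handle slide $\alpha_3\to\alpha_1$ after applying $g$ to land on the $\alpha$-curves of Figure~\ref{fig:sec4.4}, so the claim $g\bigl(t_{\delta_1^\beta}^{p-2}t_{\tilde\delta}^{p-2}(\alpha_3)\bigr)=\alpha_3$ is not correct on the nose.

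You flag the disjointness bookkeeping as the likely obstacle, and that is indeed where the proposal falls short: the clean ``one diffeomorphism, zero slides'' picture would require intersection data that the diagram does not have. To repair the argument you would need to (i) allow the residual handle slide $\alpha_3\to\alpha_1$ on the $\alpha$ side, and (ii) replace the pointwise disjointness claim on the $\beta$ side with the weaker but correct statement that $\hat\delta\sim\tilde\delta$ and $\delta_1^\beta\sim\delta_1^\gamma$ after sliding over $\beta_2$, so the composite twist fixes the $\beta$ cut system up to slides. With those two modifications your outline coincides with the paper's proof.
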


%$t_{\tilde{\delta}}^{-(p-2)} t_{\hat{\delta}}^{p-2} t_{\delta_1^{\beta}}^{-(p-2)} t_{{\delta_1^\gamma}}^{p-2}$

\begin{proof}
By performing $t_{\tilde{\delta}}^{-(p-2)} t_{\hat{\delta}}^{p-2} t_{\delta_1^{\beta}}^{-(p-2)}$ for the above trisection diagram depicted in Figure \ref{fig:sec4.3}, we have the trisection diagram
\[((\alpha_1,\alpha_2,\alpha_3), t_{\tilde{\delta}}^{-(p-2)} t_{\hat{\delta}}^{p-2} t_{\delta_1^{\beta}}^{-(p-2)}t_{{\delta_1^\gamma}}^{p-2}(\beta_1,\beta_2,\beta_3), t_{\tilde{\delta}}^{-(p-2)} t_{\hat{\delta}}^{p-2} t_{\delta_1^{\beta}}^{-(p-2)}t_{{\delta_1^\gamma}}^{p-2}(\gamma_1,\gamma_2,\gamma_3)).
\]
By sliding $\alpha_3$ over $\alpha_1$, we have the $\alpha$ curves in Figure \ref{fig:sec4.4}. By sliding $\hat{\delta}$ and $\delta_1^{\beta}$ over $\beta_2$, $\hat{\delta}$ and $\delta_1^{\beta}$ can be parallel to $\tilde{\delta}$ and ${\delta_1^\gamma}$ respectively. Thus, $t_{\tilde{\delta}}^{-(p-2)} t_{\hat{\delta}}^{p-2} t_{\delta_1^{\beta}}^{-(p-2)}t_{{\delta_1^\gamma}}^{p-2}(\beta_1,\beta_2,\beta_3) = (\beta_1,\beta_2,\beta_3)$. In the $\gamma$ curves, $\tilde{\delta}$ and $\hat{\delta}$ in Figure \ref{fig:sec4.4} is obtained by sliding $\tilde{\delta}$ and $\hat{\delta}$ in Figure \ref{fig:sec4.3} over $\gamma_2$. This completes the proof.
\end{proof}

\begin{figure}[h]
%\begin{tabular}{cc}
\begin{minipage}{0.5\hsize}%小さくすると寄る
\begin{center}
\includegraphics[width=8cm, height=4.5cm, keepaspectratio, scale=1]{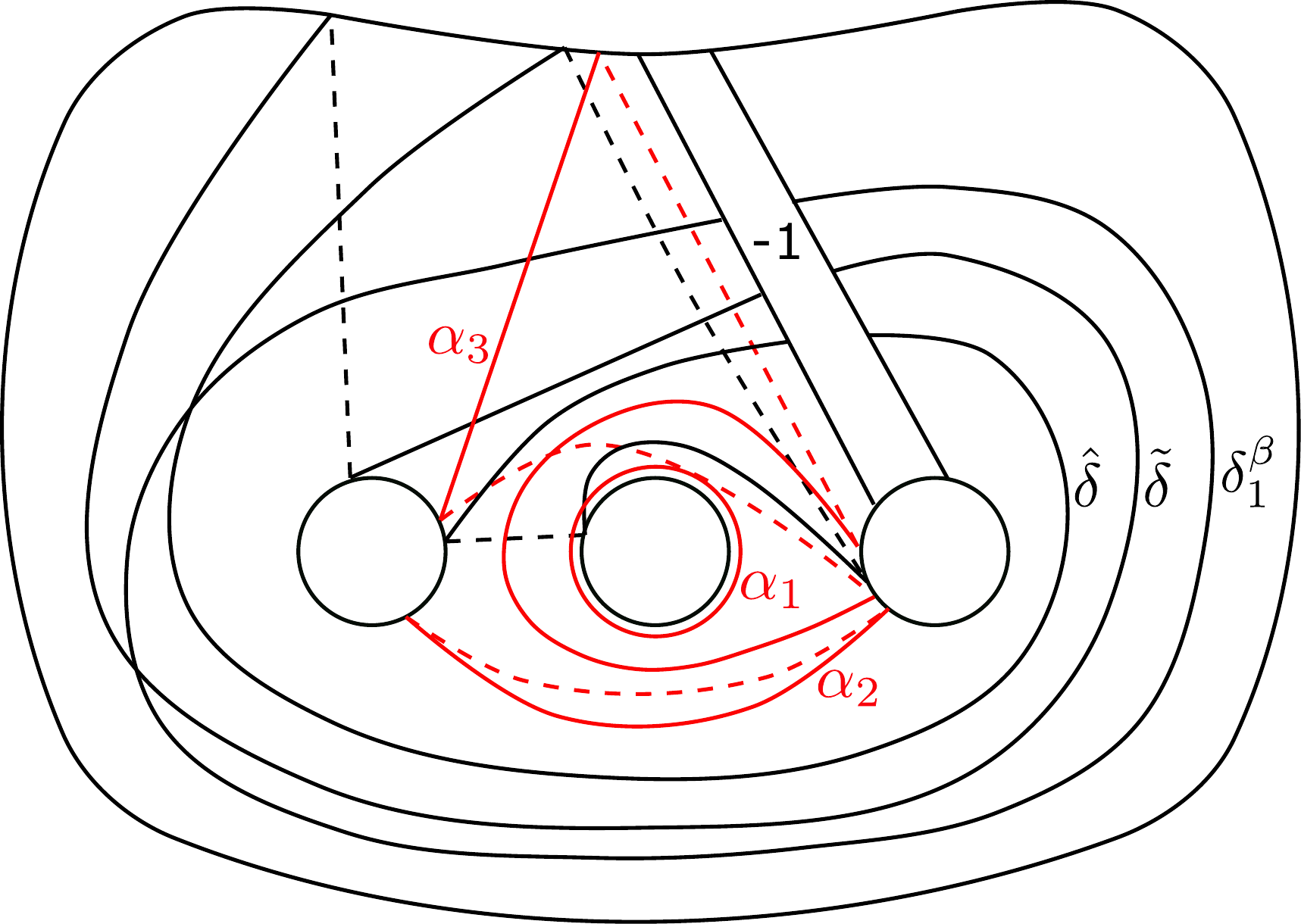}
\end{center}
\end{minipage} 
\begin{minipage}{0.49\hsize}
\begin{center}
\includegraphics[width=8cm, height=4.5cm, keepaspectratio, scale=1]{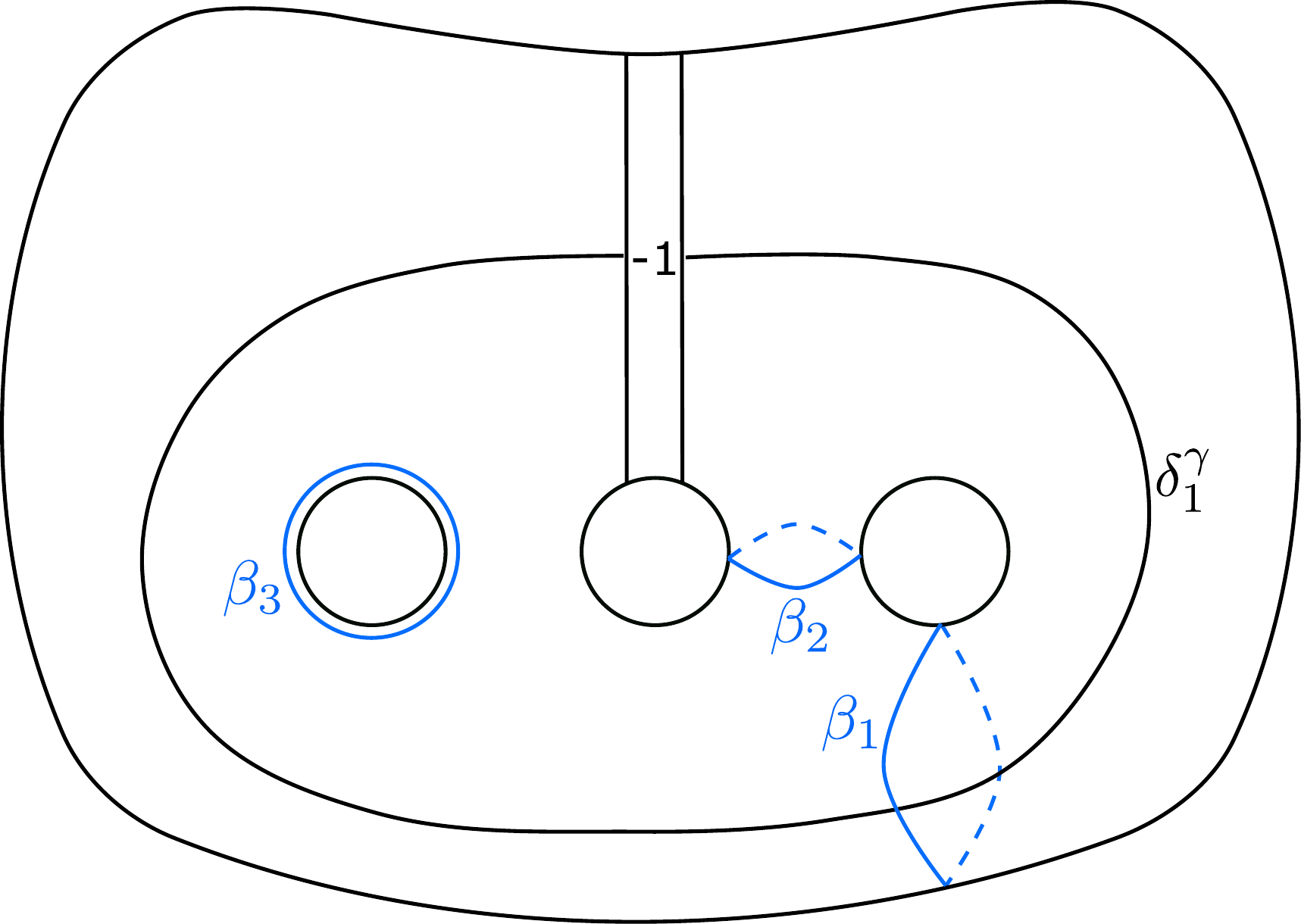}
\end{center}
\end{minipage} 
\begin{minipage}{0.5\hsize}
\begin{center}
\includegraphics[width=8cm, height=4.5cm, keepaspectratio, scale=1]{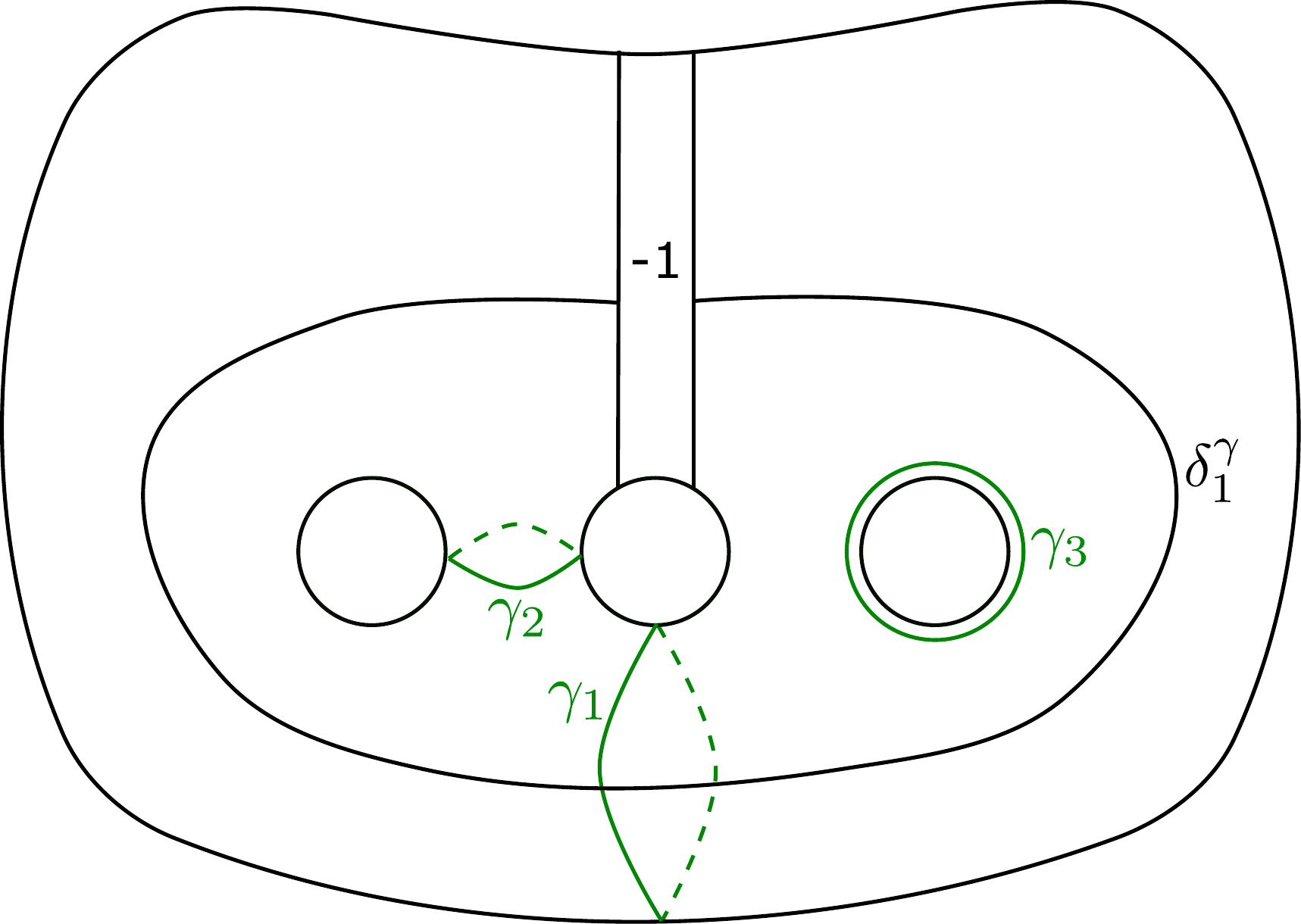}
\end{center}
\end{minipage} 
%\end{tabular}
\setlength{\captionmargin}{50pt}
\caption{The trisection diagram obtained from Figure \ref{fig:sec4.2} after some handle slides and Dehn twists, where $\alpha = (t_{\delta_1^{\beta}}^{p-2}t_{\hat{\delta}}^{-(p-2)}(\alpha_1), \alpha_2, t_{\delta_1^{\beta}}^{p-2}t_{\tilde{\delta}}^{p-2}(\alpha_3))$, $\beta = (t_{\delta_1^{\gamma}}^{p-2}(\beta_1), \beta_2, \beta_3)$ and $\gamma = (t_{{\delta_1^\gamma}}^{p-2}(\gamma_1), \gamma_2, t_{{\delta_1^\gamma}}^{p-2}(\gamma_3))$.}
\label{fig:sec4.3}
\end{figure}

\begin{figure}[h]
%\begin{tabular}{cc}
\begin{minipage}{0.5\hsize}%小さくすると寄る
\begin{center}
\includegraphics[width=8cm, height=4.5cm, keepaspectratio, scale=1]{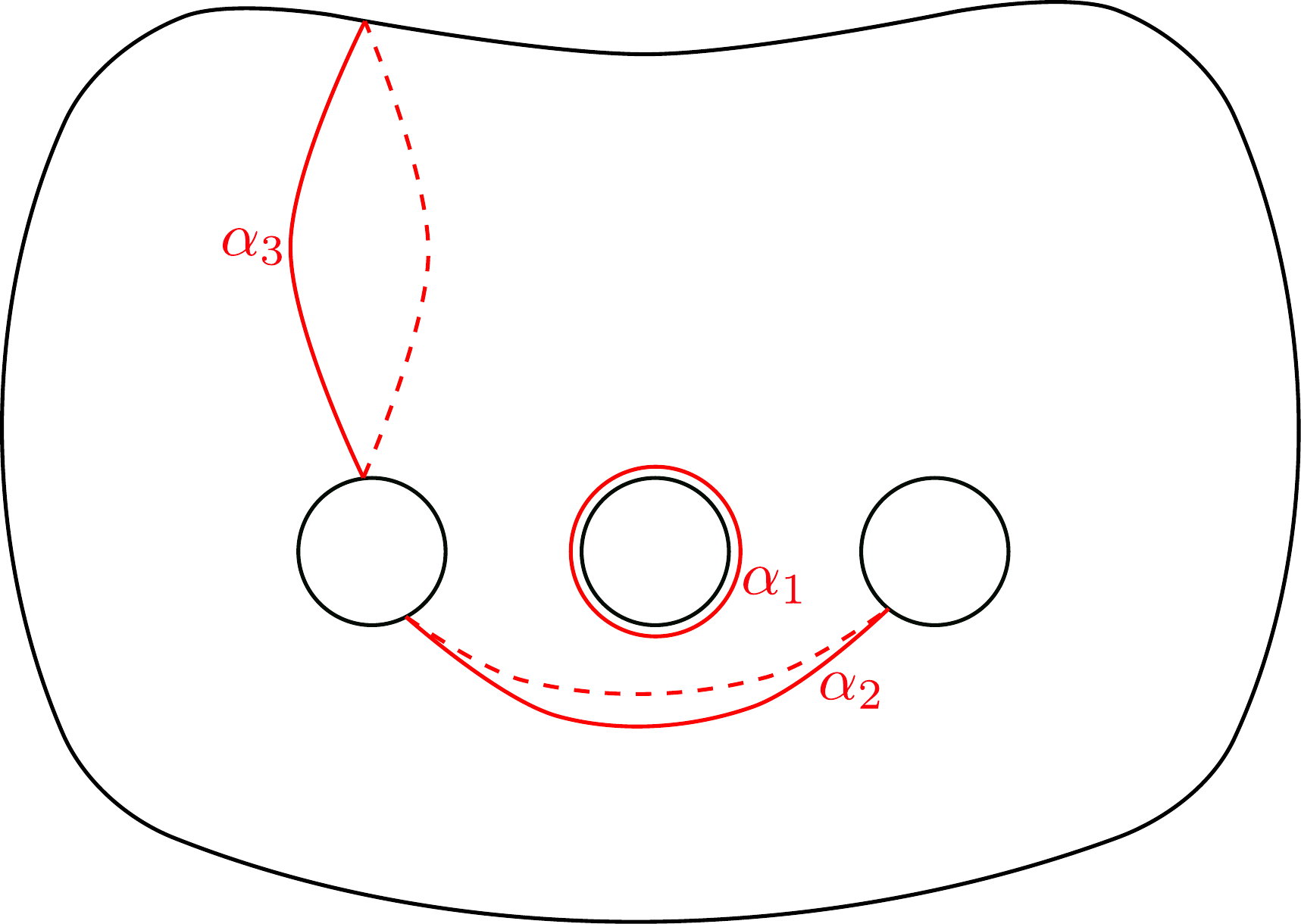}
\end{center}
\end{minipage} 
\begin{minipage}{0.49\hsize}
\begin{center}
\includegraphics[width=8cm, height=4.5cm, keepaspectratio, scale=1]{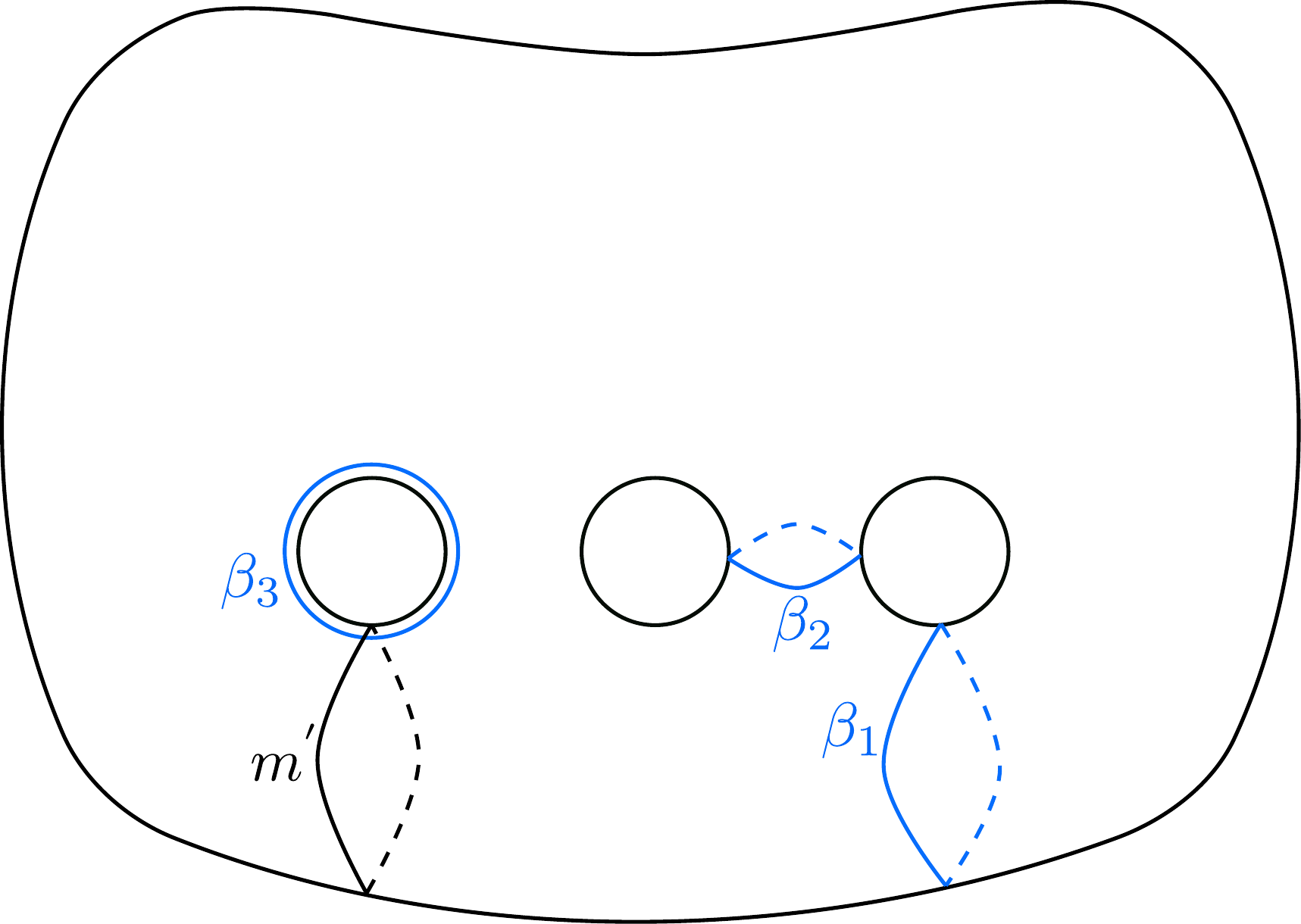}
\end{center}
\end{minipage} 
\begin{minipage}{0.5\hsize}
\begin{center}
\includegraphics[width=8cm, height=4.5cm, keepaspectratio, scale=1]{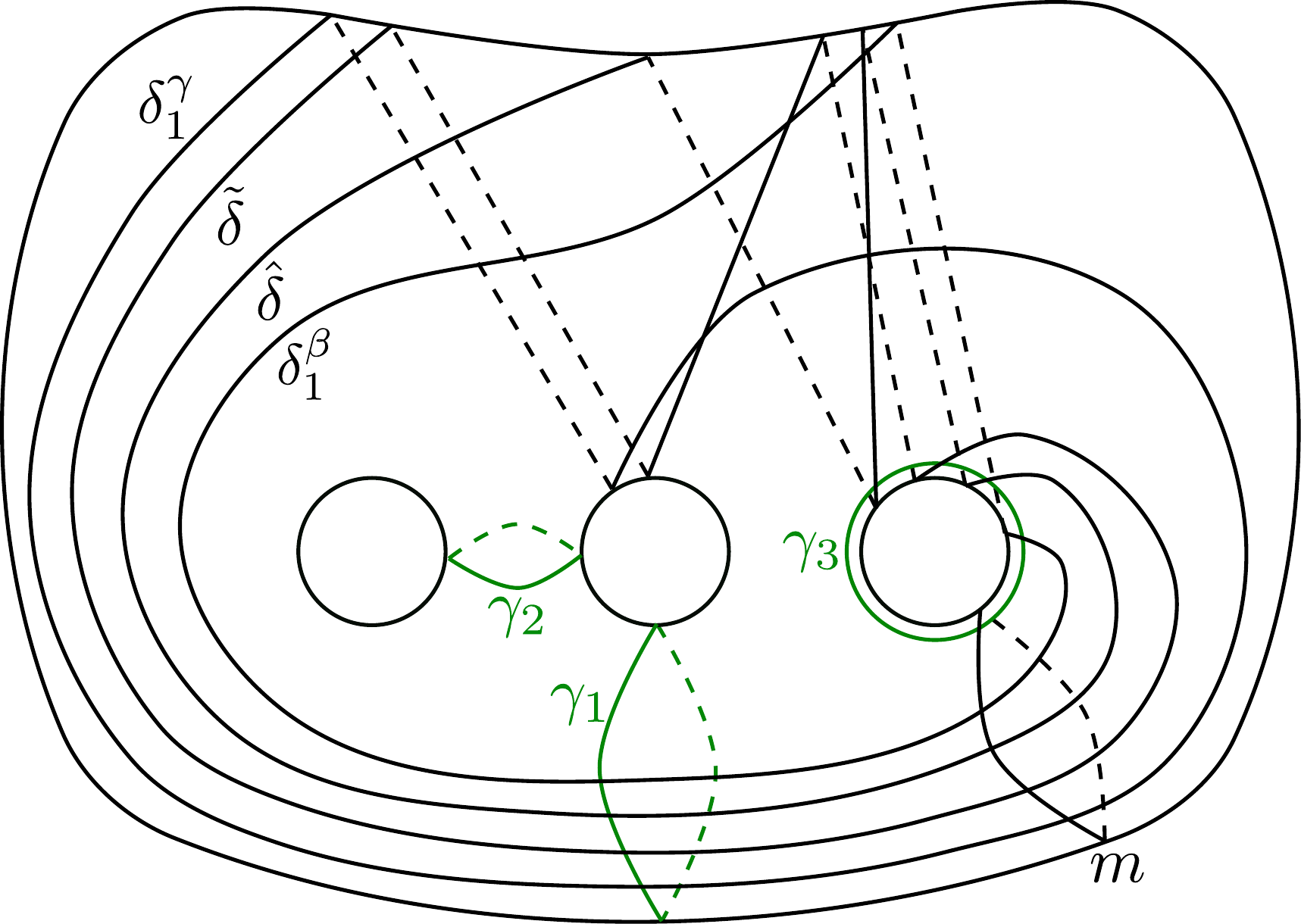}
\end{center}
\end{minipage} 
%\end{tabular}
\setlength{\captionmargin}{50pt}
\caption{The trisection diagram obtained in Lemma \ref{lem:only gamma}, where $\alpha=(\alpha_1,\alpha_2,\alpha_3)$, $\beta=(\beta_1,\beta_2,\beta_3)$, $\gamma=t_{f_{p-2}(m)}(\gamma_1,\gamma_2,\gamma_3)$, where $f_{p-2}= t_{\delta_1^{\beta}}^{-(p-2)} t_{{\delta_1^\gamma}}^{p-2} = {(t_{\delta_1^{\beta}}^{-1} t_{{\delta_1^\gamma}})}^{p-2}$.}
\label{fig:sec4.4}
\end{figure}

In Figure \ref{fig:sec4.4}, $\hat{\delta} = t_{m}^{-1}(\delta_1^{\beta})$ and $\tilde{\delta} = t_{m}^{-1}(\delta_1^{\gamma})$. Thus, $t_{\tilde{\delta}}^{-(p-2)} t_{\hat{\delta}}^{p-2} t_{\delta_1^{\beta}}^{-(p-2)} t_{{\delta_1^\gamma}}^{p-2} = t_{m}^{-1} t_{f_{p-2}(m)}$, where $f_{p-2}= t_{\delta_1^{\beta}}^{-(p-2)} t_{{\delta_1^\gamma}}^{p-2} = {(t_{\delta_1^{\beta}}^{-1} t_{{\delta_1^\gamma}})}^{p-2}$. Here, the trisection diagram $((\alpha_1,\alpha_2,\alpha_3), (\beta_1,\beta_2,\beta_3), t_{m}^{-1} t_{f_{p-2}(m)}(\gamma_1,\gamma_2,\gamma_3))$ is obtained from Lemma \ref{lem:only gamma}. Then, $((\alpha_1,\alpha_2,\alpha_3), (\beta_1,\beta_2,\beta_3), t_{f_{p-2}(m)}(\gamma_1,\gamma_2,\gamma_3))$ is obtained by performing $t_m$ for the trisection diagram. For this trisection diagram, the following key lemma holds, which we call the seesaw lemma.

\begin{lem}[The seesaw lemma]\label{lem:seesaw}
Let $\alpha=(\alpha_1,\alpha_2,\alpha_3)$, $\beta=(\beta_1,\beta_2,\beta_3)$, $\gamma=(\gamma_1,\gamma_2,\gamma_3)$ in Figure \ref{fig:sec4.4}. Then, $(\alpha, \beta, t_{f_{p-2}(m)}(\gamma)) = (\alpha, t_{m^{'}}(\beta), t_{f_{p-3}(m)}(\gamma))$.
\end{lem}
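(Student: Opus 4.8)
The statement is an identity between two trisection diagrams that differ only in the $\beta$- and $\gamma$-families, with the $\gamma$-family twisted by $t_{f_{p-2}(m)}$ on the left and by $t_{f_{p-3}(m)}$ on the right, the difference being absorbed into a single Dehn twist $t_{m'}$ applied to the $\beta$-family. The plan is to rewrite the exponent $p-2$ as $(p-3)+1$ in the word $f_{p-2} = (t_{\delta_1^\beta}^{-1} t_{\delta_1^\gamma})^{p-2}$ and peel off one factor of $t_{\delta_1^\beta}^{-1} t_{\delta_1^\gamma}$, trading it across the two families by a surface-diffeomorphism/handle-slide argument of exactly the type used in Lemma \ref{lem:beta_d} and Lemma \ref{lem:only gamma}. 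Concretely, I would first observe $f_{p-2}(m) = (t_{\delta_1^\beta}^{-1} t_{\delta_1^\gamma})(f_{p-3}(m))$, so that $t_{f_{p-2}(m)} = t_g t_{f_{p-3}(m)} t_g^{-1}$ for $g = t_{\delta_1^\beta}^{-1} t_{\delta_1^\gamma}$ (using the conjugation rule $t_{\phi(c)} = \phi t_c \phi^{-1}$). The goal is then to show that applying the conjugating factor $t_g$ to the $\gamma$-family has the same effect, as a trisection diagram, as applying a single twist $t_{m'}$ to the $\beta$-family.

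The key geometric input is that $\delta_1^\beta$ (resp.\ $\delta_1^\gamma$) is disjoint from every $\beta_i$ (resp.\ $\gamma_i$) after suitable handle slides — this is precisely the kind of fact established in Figure \ref{fig:sec4.4} and used repeatedly in Step 1, where twists along $\delta_1^\beta$ were pushed past $\beta_2,\beta_3$ at the cost of handle slides over those curves. So I would: (i) move the outermost $t_{\delta_1^\beta}^{-1}$ and $t_{\delta_1^\gamma}$ factors past the $\gamma$-curves using their (post-slide) disjointness and the fact that handle slides among the $\gamma$-family don't change the diagram; (ii) recognize that what remains of the conjugation is controlled by how $\delta_1^\beta$ and $\delta_1^\gamma$ sit relative to $m$ and to the $\beta$-curves; (iii) identify $m'$ in Figure \ref{fig:sec4.1_b} as exactly the curve obtained from $t_\delta^{-1}(\beta_1)$ after the handle slides over $\beta_2$ and $\beta_3$ named in Lemma \ref{lem:beta_d}, so that a twist along $m'$ on the $\beta$-family is available as a legal diagram move, and check that it exactly cancels the leftover piece of the conjugation. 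Throughout, I would track the Heegaard-diagram conditions: since handle slides and surface diffeomorphisms preserve the pairs $(\Sigma,\alpha,\beta)$, $(\Sigma,\beta,\gamma)$, $(\Sigma,\gamma,\alpha)$ up to the required equivalence, the manipulations stay within the class of trisection diagrams.

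The main obstacle will be step (iii): pinning down that the residual discrepancy, after the disjointness simplifications, is \emph{exactly} $t_{m'}$ on the $\beta$-family and not some product of twists and slides. This is really a statement about the curve configuration in Figure \ref{fig:sec4.4} — one must verify that $m'$, $m$, $\delta_1^\beta$, $\delta_1^\gamma$ interact in the ``seesaw'' pattern suggested by the name: pushing one unit of twisting off the $\gamma$-side forces exactly one unit onto the $\beta$-side. I expect this to come down to a lantern-type or chain relation among Dehn twists on the genus-$3$ surface, or more elementarily to an explicit isotopy exhibited in the relevant figure. The rest — the conjugation identity, the exponent bookkeeping $p-2 = (p-3)+1$, and the disjointness-driven commutations — is routine given the techniques already deployed in Section \ref{sec:main}.
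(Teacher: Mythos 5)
Your high-level intuition — peel off one factor of $t_{\delta_1^\beta}^{-1} t_{\delta_1^\gamma}$ and push one unit of twisting from $\gamma$ to $\beta$ — correctly describes what the lemma does, but the decomposition you propose is not the one the proof uses, and the obstacle you flag at step (iii) is a real gap that a lantern or chain relation will not close. The conjugation $t_{f_{p-2}(m)} = g\, t_{f_{p-3}(m)}\, g^{-1}$ with $g=t_{\delta_1^\beta}^{-1} t_{\delta_1^\gamma}$ is algebraically correct (and note $g$ itself is not a Dehn twist, so ``$t_g$'' is a misnomer), but to realize it as a legal diagram manipulation you have to apply $g^{\pm1}$ to \emph{all three} families. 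Your steps (i)--(iii) only track $\beta$ and $\gamma$; nothing in the sketch explains why the $\alpha$-curves come back to $\alpha$, and twisting along $\delta_1^\beta$ and $\delta_1^\gamma$ does not obviously act trivially on $\alpha$ even up to $\alpha$-slides. That is where the argument would stall.

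What actually closes the argument is the ``key idea'' of Figure \ref{fig:key_idea}: handle-slide the twisting curve over a curve of the family you are about to twist, \emph{before} twisting. The paper's proof never Dehn twists along $\delta_1^\beta$ or $\delta_1^\gamma$ at this stage; it twists along $f_{p-2}(m)$ and its $\alpha_2$-slide. Concretely: apply $t_{f_{p-2}(m)}^{-1}$ to the whole diagram, which cancels the twist on $\gamma$ and leaves $\beta$ unchanged up to slides (since $f_{p-2}(m)$ slides over $\beta_2$ to $m$, disjoint from $\beta$), so only $\alpha$ remembers the twist. Then slide $f_{p-2}(m)$ over $\alpha_2$ to get $\overline{f_{p-2}(m)}=f_{p-2}(m')$; since the two curves differ by an $\alpha$-slide, $t_{f_{p-2}(m)}^{-1}(\alpha)=t_{\overline{f_{p-2}(m)}}^{-1}(\alpha)$ up to $\alpha$-handle slides. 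Applying $t_{\overline{f_{p-2}(m)}}$ globally then returns $\alpha$ to $\alpha$, sends $\beta$ to $t_{m'}(\beta)$ (since $\overline{f_{p-2}(m)}$ slides over $\beta_2$ to $m'$), and sends $\gamma$ to $t_{f_{p-3}(m)}(\gamma)$ (since $f_{p-2}(m')$ slides over $\gamma_2$ to $f_{p-3}(m)$). The ``seesaw'' is the net surface diffeomorphism $t_{\overline{f_{p-2}(m)}}\, t_{f_{p-2}(m)}^{-1}$, which is tame on $\alpha$ by construction; no mapping-class-group relation is used, only this single $\alpha_2$-slide of the twisting curve.
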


\begin{proof}
We focus on the simple closed curve $f_{p-2}(m)$. 
By performing $t_{f_{p-2}(m)}^{-1}$ for $(\alpha, \beta, t_{f_{p-2}(m)}(\gamma))$, $(t_{f_{p-2}(m)}^{-1}(\alpha), t_{f_{p-2}(m)}^{-1}(\beta), \gamma)$ is obtained. 
Then, since $f_{p-2}(m)$ can be parallel to $m$ by sliding $f_{p-2}(m)$ over $\beta_2$ $p-2$ times, $(t_{f_{p-2}(m)}^{-1}(\alpha), t_{f_{p-2}(m)}^{-1}(\beta), \gamma) = (t_{f_{p-2}(m)}^{-1}(\alpha), \beta, \gamma)$. 
In the $\alpha$ curves, let $\overline{f_{p-2}(m)}$ be simple closed curve obtained by sliding $f_{p-2}(m)$ over $\alpha_2$. 
Namely, $\overline{f_{p-2}(m)}=f_{p-2}(m^{'})$. 
Then, perform $t_{\overline{f_{p-2}(m)}}$, that is, $(t_{f_{p-2}(m)}^{-1}(\alpha), \beta, \gamma) = (t_{\overline{f_{p-2}(m)}}^{-1}(\alpha), \beta, \gamma) = (\alpha, t_{\overline{f_{p-2}(m)}}(\beta), t_{\overline{f_{p-2}(m)}}(\gamma))$. 
In the $\beta$ curves, since $\overline{f_{p-2}(m)}$ can be parallel to $m^{'}$ by sliding $\overline{f_{p-2}(m)}$ over $\beta_2$ $p-2$ times, $(\alpha, t_{\overline{f_{p-2}(m)}}(\beta), t_{\overline{f_{p-2}(m)}}(\gamma)) = (\alpha, t_{m^{'}}(\beta), t_{\overline{f_{p-2}(m)}}(\gamma))$. Moreover, in the $\gamma$ curves, $\overline{f_{p-2}(m)}$ can be parallel to $f_{p-3}(m)$ by sliding $\overline{f_{p-2}(m)}$ over $\gamma_2$ since $f_1(m^{'})$ can be parallel to $m$ by sliding $f_1(m^{'})$ over $\gamma_2$.
Thus, $(\alpha, t_{m^{'}}(\beta), t_{\overline{f_{p-2}(m)}}(\gamma)) = (\alpha, t_{m^{'}}(\beta), t_{f_{p-3}(m)}(\gamma))$. 
This completes the proof.
\end{proof}

We can use the seesaw lemma iterately since in the proof of the seesaw lemma we only consider handle sliding $f_i(m)$ over $\alpha_2$, $\beta_2$ and $\gamma_2$. Using the seesaw lemma $p-2$ times for $(\alpha, \beta, t_{f_{p-2}(m)}(\gamma))$, we have $(\alpha, t_{m^{'}}^{p-2}(\beta), t_{m}(\gamma))$. By performing $t_{m^{'}}^{-(p-2)} t_{m}^{-1}$ for this trisection diagram, we have $(\alpha,\beta,\gamma)$ depicted in Figure \ref{fig:final}, the stabilization of the genus 0 trisection diagram of $S^4$. This completes the proof of Theorem \ref{thm:main}.

\begin{figure}[h]
\begin{tabular}{cc}
\begin{minipage}{0.33\hsize}%小さくすると寄る
\begin{center}
\includegraphics[width=8cm, height=3cm, keepaspectratio, scale=1]{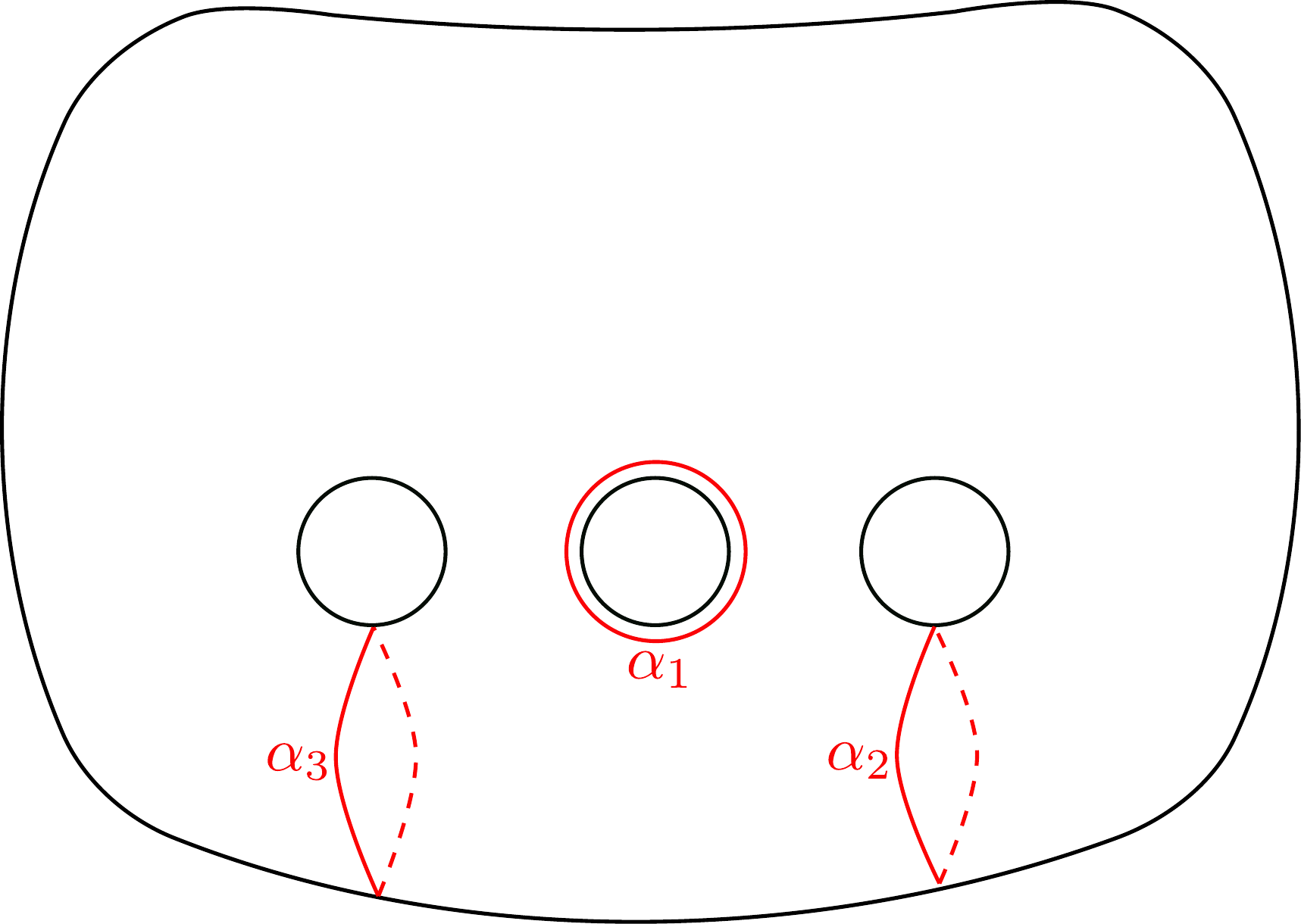}
\end{center}
\end{minipage} 
\begin{minipage}{0.33\hsize}
\begin{center}
\includegraphics[width=8cm, height=3cm, keepaspectratio, scale=1]{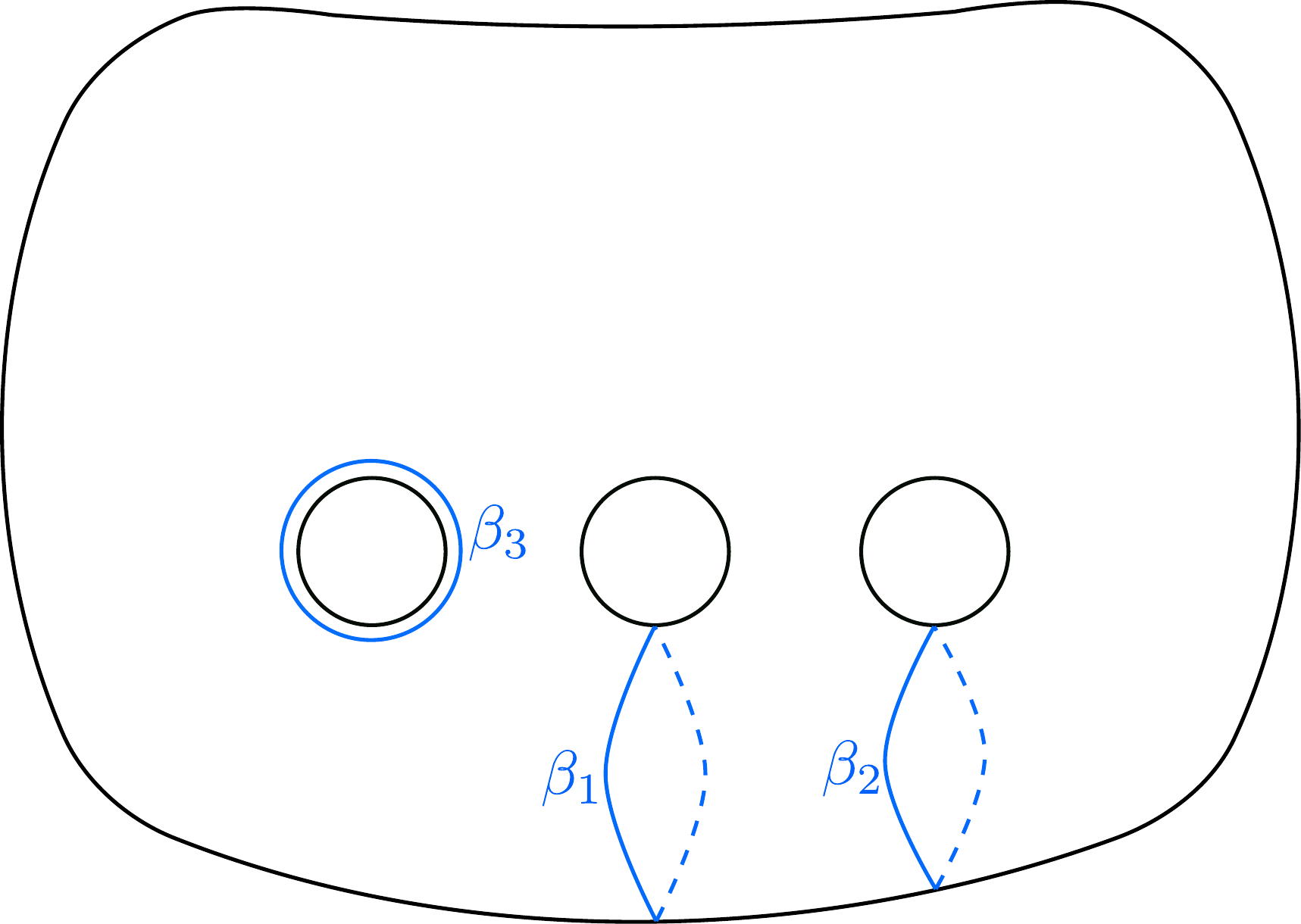}
\end{center}
\end{minipage} 
\begin{minipage}{0.33\hsize}
\begin{center}
\includegraphics[width=8cm, height=3cm, keepaspectratio, scale=1]{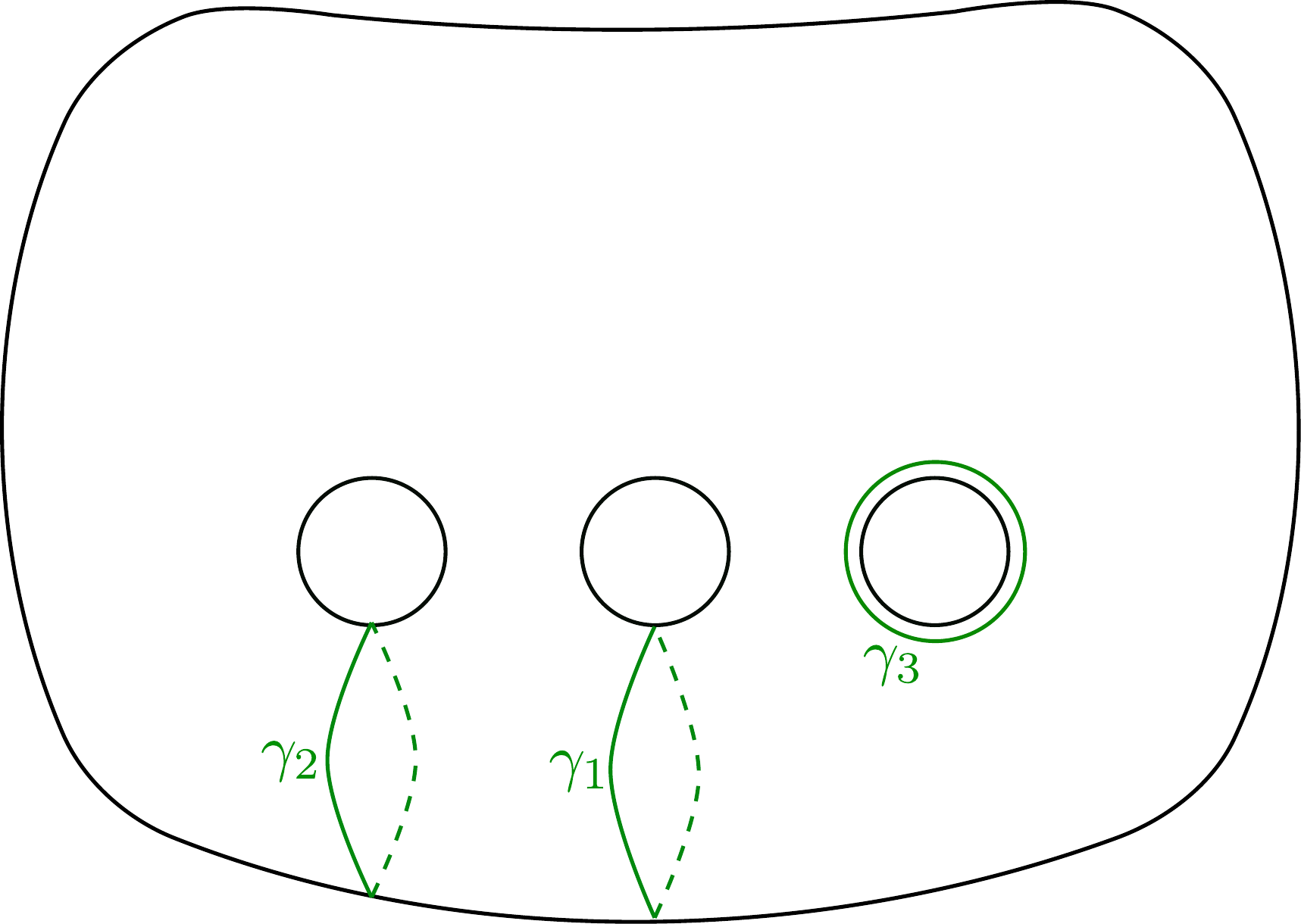}
\end{center}
\end{minipage} 
\end{tabular}
\setlength{\captionmargin}{50pt}
\caption{The stabilization of the genus 0 trisection diagram of $S^4$.}
\label{fig:final}
\end{figure}

%図はこれで全部で良い気がする．

%この後に何か書くとすれば，
%・上手くいきそうなやつの予想を立てる
%・福田さんから教えてもらった怪しいやつをQuestionとして書く
%の二つになると思う．実際に書くかは検討中．

%Kim-MillerのQuestionのことを書こう．
\section{Further remarks and questions}\label{sec:fr}
In \cite{MR4071374}, Kim and Miller introduced a new technique called a \textit{boundary-stabilization} to construct a relative trisection of the complement of a surface-knot and gave an algorithm for constructing its diagram. Then, they submitted the following question. Note that a $P^2$-\textit{knot} is an embedded real projective plane and the \textit{Price twist} is a cutting and pasting operation along a $P^2$-knot. The Price twist for $S^4$ on a $P^2$-knot $S$ provides us with at most three 4-manifolds up to diffeomorphism, namely, $S^4$, a homotopy 4-sphere $\Sigma_S(S^4)$ and $\tau_S(S^4)$ which is not simply-connected.

\begin{que}[Question 6.2 in \cite{MR4071374}]\label{que:KM}
Let $S$ be a $P^2$-knot in $S^4$ and $\Sigma_S(S^4)$ the Price twisted homotopy 4-sphere. We consider the case that $\Sigma_S(S^4)$ is diffeomorphic to $S^4$. Is a trisection diagram of $\Sigma_S(S^4)$ obtained from the algorithm standard?
\end{que}

It is known that for the unknotted $P^2$-knot $P$ and any 2-knot $K$, $\Sigma_{P\#K}(S^4)$ is diffeomorphic to the Gluck twisted 4-manifold on $K$ \cite{KSTY}. Naylor \cite{MR4480889} showed that the trisection diagram in Question \ref{que:KM} is related to the trisection diagram of the Gluck twisted 4-manifold on $K$ given in \cite{MR4354420} without stabilizations. Therefore, Theorem \ref{thm:main} also answers affirmatively in the case of $S=P\#S(t(p+1,p))$ (and the doubly-pointed trisection diagram). Note that a $P^2$-knot $P\#K$ is called of \textit{Kinoshita type}. It is not yet known that any $P^2$-knot is of Kinoshita type.

The author and Ogawa \cite{isoshima2023trisections} explicitly depicted the trisection diagram appeared in Question \ref{que:GM} in the case of the spun $(2n+1,2)$-torus knot, where $n \ge 1$, and showed that the trisection diagram is standard when $n=1$. (This result corresponds to the case of $p=2$ in this paper.) However, we have yet to detect whether the trisection diagram is standard when $n \ge 2$. So, we submit the following question:

\begin{que}
Is the trisection diagram of $\Sigma_{S(t(2n+1,2))}(S^4)$ in \cite{isoshima2023trisections} standard when $n \ge 2$?
\end{que}

Note that in \cite{isoshima2023trisections}, we introduced a notion of homologically standard for trisection diagrams and showed that the trisection diagram is homologically standard for all $n$. Since a standard trisection diagram is homologically standard, the trisection diagram of $\Sigma_{S(t(p+1,p))}(S^4)$ in this paper is obviously homologically standard.

\begin{que}
Is there a non-homologically standard trisection diagram? If No, a homologically standard trisection diagram is standard?
\end{que}

%\begin{con}
%\end{con}

\bibliographystyle{amsalpha}
\bibliography{trisection, dphd, math}

\providecommand{\bysame}{\leavevmode\hbox to3em{\hrulefill}\thinspace}
\providecommand{\MR}{\relax\ifhmode\unskip\space\fi MR }
% \MRhref is called by the amsart/book/proc definition of \MR.
\providecommand{\MRhref}[2]{%
  \href{http://www.ams.org/mathscinet-getitem?mr=#1}{#2}
}
\providecommand{\href}[2]{#2}
\begin{thebibliography}{CGPC18}

\bibitem[Cas16]{castro2016relative}
Nickolas~Andres Castro, \emph{Relative trisections of smooth 4-manifolds with
  boundary}, Ph.D. thesis, University of Georgia, 2016.

\bibitem[CGPC18]{MR3770114}
Nickolas~A. Castro, David~T. Gay, and Juanita Pinz\'{o}n-Caicedo,
  \emph{Diagrams for relative trisections}, Pacific J. Math. \textbf{294}
  (2018), no.~2, 275--305.

\bibitem[GK16]{MR3590351}
David Gay and Robion Kirby, \emph{Trisecting 4-manifolds}, Geom. Topol.
  \textbf{20} (2016), no.~6, 3097--3132.

\bibitem[GM22]{MR4354420}
David Gay and Jeffrey Meier, \emph{Doubly pointed trisection diagrams and
  surgery on 2-knots}, Math. Proc. Cambridge Philos. Soc. \textbf{172} (2022),
  no.~1, 163--195. \MR{4354420}

\bibitem[IO23]{isoshima2023trisections}
Tsukasa Isoshima and Masaki Ogawa, \emph{Trisections induced by the {G}luck
  surgery along certain spun knots}, 2023, arXiv:2305.12042.

\bibitem[Isl21]{MR4308281}
Gabriel Islambouli, \emph{Nielsen equivalence and trisections}, Geom. Dedicata
  \textbf{214} (2021), 303--317. \MR{4308281}

\bibitem[Iso22]{https://doi.org/10.48550/arxiv.2205.04817}
Tsukasa Isoshima, \emph{Trisections obtained by trivially regluing
  surface-knots}, 2022, arXiv:2205.04817.

\bibitem[KM20]{MR4071374}
Seungwon Kim and Maggie Miller, \emph{Trisections of surface complements and
  the {P}rice twist}, Algebr. Geom. Topol. \textbf{20} (2020), no.~1, 343--373.
  \MR{4071374}

\bibitem[KSTY99]{KSTY}
Atsuko Katanaga, Osamu Saeki, Masakazu Teragaito, and Yuichi Yamada,
  \emph{Gluck surgery along a {$2$}-sphere in a {$4$}-manifold is realized by
  surgery along a projective plane}, Michigan Math. J. \textbf{46} (1999),
  no.~3, 555--571.

\bibitem[MSZ16]{MR3544545}
Jeffrey Meier, Trent Schirmer, and Alexander Zupan, \emph{Classification of
  trisections and the generalized property {R} conjecture}, Proc. Amer. Math.
  Soc. \textbf{144} (2016), no.~11, 4983--4997.

\bibitem[MZ17]{MR3683111}
Jeffrey Meier and Alexander Zupan, \emph{Bridge trisections of knotted surfaces
  in {$S^4$}}, Trans. Amer. Math. Soc. \textbf{369} (2017), no.~10, 7343--7386.

\bibitem[MZ18]{MR3871791}
\bysame, \emph{Bridge trisections of knotted surfaces in 4-manifolds}, Proc.
  Natl. Acad. Sci. USA \textbf{115} (2018), no.~43, 10880--10886.

\bibitem[Nay22]{MR4480889}
Patrick Naylor, \emph{Trisection diagrams and twists of 4-manifolds}, C. R.
  Math. Acad. Sci. Paris \textbf{360} (2022), 845--866. \MR{4480889}

\bibitem[Ord13]{MR3143587}
Philip Ording, \emph{Constructing doubly-pointed {H}eegaard diagrams compatible
  with {$(1,1)$} knots}, J. Knot Theory Ramifications \textbf{22} (2013),
  no.~11, 1350071, 26. \MR{3143587}

\end{thebibliography}

\end{document}